\newtheorem{theorem}{Theorem}[section]
\newtheorem{lemma}[theorem]{Lemma}
\newtheorem{corollary}[theorem]{Corollary}
\newcommand{\leqnomode}{\tagsleft@true}
\newcommand{\reqnomode}{\tagsleft@false}
\def\dd{\hbox{-}}
\DeclareMathOperator{\tw}{tw}
\DeclareMathOperator{\width}{width}
\newcounter{tbox}
\newcommand{\sta}[1]{\medskip\refstepcounter{tbox}\noindent{\parbox{\textwidth}{(\thetbox) \emph{#1}}}\vspace*{0.3cm}}
\newcommand{\mylongtitle}[1]{%
  \ifodd\value{page}%
    \protect\parbox{0.97\linewidth}{#1}\hfill%
  \else%
    \hfill\protect\parbox{0.97\linewidth}{#1}%
  \fi%
}
\def\dd{\hbox{-}}
\newcommand{\otherlabel}[2]{\protected@edef\@currentlabel{#2}\label{#1}}
\mathchardef\mh="2D
\title[Induced subgraphs and tree decompositions VIII.]{Induced subgraphs and tree decompositions\\
VIII. Excluding a forest in (theta, prism)-free graphs}
\author{Tara Abrishami$^{\ast \dagger}$}
\author{Bogdan Alecu$^{\ast \ast \mathparagraph}$}
\author{Maria Chudnovsky$^{\ast \amalg}$}
\author{Sepehr Hajebi $^{\mathsection}$}
\author{Sophie Spirkl$^{\mathsection \parallel}$}
\address{$^{\ast}$Princeton University, Princeton, NJ, USA}
\address{$^{**}$School of Computing, University of Leeds, Leeds, UK}
\address{$^{\mathsection}$Department of Combinatorics and Optimization, University of Waterloo, Waterloo, Ontario, Canada}
\address{$^{\dagger}$ Supported by NSF-EPSRC Grant DMS-2120644.}
\address{$^{\amalg}$ Supported by NSF-EPSRC Grant DMS-2120644 and by AFOSR grant FA9550-22-1-0083.} 
     \address{$^{\mathparagraph}$ Supported by DMS-EPSRC Grant EP/V002813/1.} 
\address{$^{\parallel}$ We acknowledge the support of the Natural Sciences and Engineering Research Council of Canada (NSERC), [funding reference number RGPIN-2020-03912].
Cette recherche a \'et\'e financ\'ee par le Conseil de recherches en sciences naturelles et en g\'enie du Canada (CRSNG), [num\'ero de r\'ef\'erence RGPIN-2020-03912]. This project was funded in part by the Government of Ontario.}
\date {\today}
\begin{document}
\maketitle
\begin{abstract}
Given a graph $H$, we prove that every (theta, prism)-free graph of sufficiently large treewidth contains either a large clique or an induced subgraph isomorphic to $H$, if and only if $H$ is 
a forest.
\end{abstract}

\section{Introduction}\label{intro}

All graphs in this paper are finite and simple unless specified otherwise. 
Let $G,H$ be graphs. We say that $G$ \emph{contains} $H$ if $G$ has an induced subgraph isomorphic to $H$, and we say $G$ is \emph{$H$-free} if $G$ does not contain $H$. For a family $\mathcal{H}$ of graphs we say $G$ is \textit{$\mathcal{H}$-free} if $G$ is $H$-free for every $H \in \mathcal{H}$. A class of graphs is \textit{hereditary} if it is closed under isomorphism and taking induced subgraphs, or equivalently, if it is the class of all $\mathcal{H}$-free graphs for some family $\mathcal{H}$ of graphs.

For a graph $G = (V(G),E(G))$, a \emph{tree decomposition} $(T, \chi)$ of $G$ consists of a tree $T$ and a map $\chi: V(T) \to 2^{V(G)}$ with the following properties: 
\begin{enumerate}[(i)]
\itemsep -.2em
    \item For every $v \in V(G)$, there exists $t \in V(T)$ such that $v \in \chi(t)$. 
    
    \item For every $v_1v_2 \in E(G)$, there exists $t \in V(T)$ such that $v_1, v_2 \in \chi(t)$.
    
    \item For every $v \in V(G)$, the subgraph of $T$ induced by $\{t \in V(T) \mid v \in \chi(t)\}$ is connected.
\end{enumerate}

For each $t\in V(T)$, we refer to $\chi(t)$ as a \textit{bag of} $(T, \chi)$.  The \emph{width} of a tree decomposition $(T, \chi)$, denoted by $\width(T, \chi)$, is $\max_{t \in V(T)} |\chi(t)|-1$. The \emph{treewidth} of $G$, denoted by $\tw(G)$, is the minimum width of a tree decomposition of $G$. 

Treewidth was first popularized by Robertson and Seymour in their graph minors project, and has attracted a great deal of interest over the past three decades. Particularly, graphs of bounded treewidth have been shown to be well-behaved from structural
\cite{RS-GMV} and algorithmic \cite{Bodlaender1988DynamicTreewidth} viewpoints.

This motivates investigating the structure of graphs with large treewidth, and especially, the substructures emerging in them. The canonical result in this realm is the Grid Theorem of Robertson and Seymour \cite{RS-GMV}, which describes the unavoidable subgraphs of graphs with large treewidth. For a positive integer  $t$, the {\em $(t \times t)$-wall}, denoted by $W_{t \times t}$, is a planar graph with maximum degree three and treewidth $t$ (see Figure~\ref{fig:5x5wall}; a formal definition can be found in \cite{wallpaper}).

\begin{theorem}[Robertson and Seymour \cite{RS-GMV}]\label{wallminor}
For every integer $t\geq 1$ there exists $w=w(t)\geq 1$ such that 
every graph of treewidth more than $w$
contains a subdivision of $W_{t \times t}$ as a subgraph.
\end{theorem}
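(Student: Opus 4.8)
The plan is to route through the duality between large treewidth and a highly connected substructure, and then extract a wall from that substructure. First I would invoke the duality theorem of Seymour and Thomas: a graph $G$ has $\tw(G)\ge k$ if and only if it admits a \emph{bramble} of order greater than $k$, i.e.\ a collection of pairwise touching connected subgraphs with no small common hitting set. Thus if $\tw(G)>w(t)$ for a sufficiently large $w(t)$, we obtain a bramble of large order, and from it a set $Z\subseteq V(G)$ that is \emph{well-linked}: for some large $\ell=\ell(t)$, every two disjoint $A,B\subseteq Z$ with $|A|=|B|\le \ell$ can be joined by $|A|$ vertex-disjoint paths in $G$.

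Second, using $Z$ I would build a \emph{path-of-sets system}: a sequence $S_1,\dots,S_r$ of disjoint connected subgraphs, each containing a well-linked set of size $h$, together with $h$ vertex-disjoint paths linking consecutive $S_i$'s and internally disjoint from all the $S_j$. The construction is iterative: at each step one uses well-linkedness to route a fresh bundle of paths across the graph, splits off a new cell $S_{i+1}$, and argues that what remains still carries enough connectivity to continue. Making both $r$ and $h$ of order (a polynomial in) $t$ is essentially the content of the Chekuri--Chuzhoy argument; the original Robertson--Seymour proof reaches the same conclusion by a more direct but less efficient recursion on a ``three-path-configuration.''

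Third, from a path-of-sets system with $r$ and $h$ large enough I would carve out a grid, hence a wall, minor: route the $h$ linkage paths through each cell $S_i$ so that they cross in a grid-like pattern, using well-linkedness inside $S_i$ to realize an arbitrary matching between the paths entering and leaving it, and concatenate across the cells to obtain $\Theta(t)$ ``horizontal'' paths meeting $\Theta(t)$ ``vertical'' paths — a $W_{t\times t}$ minor. Since $W_{t\times t}$ has maximum degree three, any minor model of it can be pruned to an actual subdivision of $W_{t\times t}$ appearing as a subgraph of $G$, which is the desired conclusion.

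The hard part will be the second step: keeping control of the well-linkedness parameter as the graph is repeatedly cut into cells. Naively, each split can roughly halve the available connectivity, which would force $w(t)$ to be exponential or worse; the delicate point is the ``boosting'' machinery (well-linked set decomposition, rerouting around bottlenecks) that keeps the loss polynomial. It is this bookkeeping, rather than any single clever idea, that makes a complete proof long.
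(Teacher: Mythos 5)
This statement is a black-box citation in the paper: Theorem~\ref{wallminor} is attributed to Robertson and Seymour \cite{RS-GMV} and is not proved anywhere in the manuscript, so there is no ``paper's proof'' to compare your proposal against. The authors simply invoke it (and its consequences, such as Theorem~\ref{tightdegorg} and Theorem~\ref{noblocksmalltw_wall}) as known results from the literature.

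As a self-contained outline of the Grid Theorem your sketch is sound at the level of detail given, and it is honest about where the real work lies. You are in fact describing the Chekuri--Chuzhoy route (bramble $\Rightarrow$ well-linked set $\Rightarrow$ path-of-sets system $\Rightarrow$ grid minor), which achieves polynomial bounds, whereas the original Robertson--Seymour argument in \cite{RS-GMV} proceeds via tangles and a different recursion and only yields much larger (non-elementary) bounds on $w(t)$. Either route is acceptable here since the paper does not care about the dependence of $w$ on $t$. One small but genuine point to tighten: you assert that a $W_{t\times t}$ minor can be ``pruned'' to a subdivision because the wall has maximum degree three. The correct statement is that for any graph $H$ of maximum degree at most three, $G$ has an $H$-minor if and only if $G$ has a subgraph that is a subdivision of $H$; this is a standard lemma but it deserves to be invoked explicitly rather than waved at, since it is exactly what converts the minor-theoretic conclusion into the subdivision form stated in the theorem. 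Beyond that, what you have written is a roadmap rather than a proof --- each of your three steps is itself a substantial theorem --- but as a roadmap it is accurate.
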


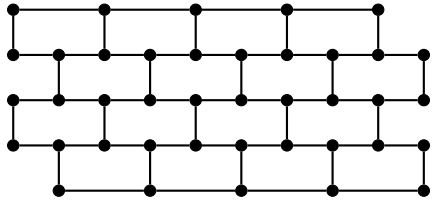
\begin{figure}
\centering

\begin{tikzpicture}[scale=2,auto=left]
\tikzstyle{every node}=[inner sep=1.5pt, fill=black,circle,draw]  
\centering

\node (s10) at (0,1.2) {};
\node(s12) at (0.6,1.2){};
\node(s14) at (1.2,1.2){};
\node(s16) at (1.8,1.2){};
\node(s18) at (2.4,1.2){};

\node (s20) at (0,0.9) {};
\node (s21) at (0.3,0.9) {};
\node(s22) at (0.6,0.9){};
\node (s23) at (0.9,0.9) {};
\node(s24) at (1.2,0.9){};
\node (s25) at (1.5,0.9) {};
\node(s26) at (1.8,0.9){};
\node (s27) at (2.1,0.9) {};
\node(s28) at (2.4,0.9){};
\node (s29) at (2.7,0.9) {};

\node (s30) at (0,0.6) {};
\node (s31) at (0.3,0.6) {};
\node(s32) at (0.6,0.6){};
\node (s33) at (0.9,0.6) {};
\node(s34) at (1.2,0.6){};
\node (s35) at (1.5,0.6) {};
\node(s36) at (1.8,0.6){};
\node (s37) at (2.1,0.6) {};
\node(s38) at (2.4,0.6){};
\node (s39) at (2.7,0.6) {};

\node (s40) at (0,0.3) {};
\node (s41) at (0.3,0.3) {};
\node(s42) at (0.6,0.3){};
\node (s43) at (0.9,0.3) {};
\node(s44) at (1.2,0.3){};
\node (s45) at (1.5,0.3) {};
\node(s46) at (1.8,0.3){};
\node (s47) at (2.1,0.3) {};
\node(s48) at (2.4,0.3) {};
\node (s49) at (2.7,0.3) {};

\node (s51) at (0.3,0.0) {};
\node (s53) at (0.9,0.0) {};
\node (s55) at (1.5,0.0) {};
\node (s57) at (2.1,0.0) {};
\node (s59) at (2.7,0.0) {};

\foreach \from/\to in {s10/s12, s12/s14,s14/s16,s16/s18}
\draw [thick] (\from) -- (\to);

\foreach \from/\to in {s20/s21, s21/s22, s22/s23, s23/s24, s24/s25, s25/s26,s26/s27,s27/s28,s28/s29}
\draw [thick] (\from) -- (\to);

\foreach \from/\to in {s30/s31, s31/s32, s32/s33, s33/s34, s34/s35, s35/s36,s36/s37,s37/s38,s38/s39}
\draw [thick] (\from) -- (\to);

\foreach \from/\to in {s40/s41, s41/s42, s42/s43, s43/s44, s44/s45, s45/s46,s46/s47,s47/s48,s48/s49}
\draw [thick] (\from) -- (\to);

\foreach \from/\to in {s51/s53, s53/s55,s55/s57,s57/s59}
\draw [thick] (\from) -- (\to);

\foreach \from/\to in {s10/s20, s30/s40}
\draw [thick] (\from) -- (\to);

\foreach \from/\to in {s21/s31,s41/s51}
\draw [thick] (\from) -- (\to);

\foreach \from/\to in {s12/s22, s32/s42}
\draw [thick] (\from) -- (\to);

\foreach \from/\to in {s23/s33,s43/s53}
\draw [thick] (\from) -- (\to);

\foreach \from/\to in {s14/s24, s34/s44}
\draw [thick] (\from) -- (\to);

\foreach \from/\to in {s25/s35,s45/s55}
\draw [thick] (\from) -- (\to);

\foreach \from/\to in {s16/s26,s36/s46}
\draw [thick] (\from) -- (\to);

\foreach \from/\to in {s27/s37,s47/s57}
\draw [thick] (\from) -- (\to);

\foreach \from/\to in {s18/s28,s38/s48}
\draw [thick] (\from) -- (\to);

\foreach \from/\to in {s29/s39,s49/s59}
\draw [thick] (\from) -- (\to);

\end{tikzpicture}

\caption{$W_{5 \times 5}$}
\label{fig:5x5wall}
\end{figure}

Theorem~\ref{wallminor} can also be reformulated into a full characterization of unavoidable minors in graphs of large treewidth: for every planar graph $H$, every graph of sufficiently large treewidth contains $H$ as a minor. In contrast, unavoidable induced subgraphs of graphs with large treewidth are far from completely understood. There are some natural candidates though, which we refer to as the ``basic obstructions'': complete graphs and complete bipartite graphs, subdivided walls mentioned above, and line graphs of subdivided walls, where the {\em line graph} $L(F)$ of a graph $F$ is the graph with vertex set $E(F)$, such that two vertices of $L(F)$ are adjacent if and only if  the corresponding edges of $F$ share an end. Note that the complete graph $K_{t+1}$, the complete bipartite graph $K_{t,t}$, and the line graph of every subdivision of $W_{t\times t}$ all have treewidth $t$. For a positive integer $t$, let us say a graph 
$H$ is a {\em $t$-basic obstruction} if $H$ is one of the following graphs:
$K_t$, $K_{t,t}$, a subdivision of $W_{t \times t}$, or  the line graph of
a subdivision of $W_{t \times t}$. We say a graph $G$ is \textit{$t$-clean} if $G$ does not contain a $t$-basic obstruction.

The basic obstructions do not form a comprehensive list of induced subgraph obstructions to bounded treewidth: there are $t$-clean graphs of arbitrarily large treewidth for small values of $t$. A well-known hereditary graph class demenstrating this fact is the class of even-hole-free graphs, where a {\em hole} is an induced cycle on at least four vertices, the \textit{length} of a hole is its number of edges and an \textit{even hole} is a hole with even length. In fact, complete graphs are the only even-hole-free basic obstruction. In other words, for every positive integer $t\geq 1$, one may observe that an even-hole-free graph is $t$-clean if and only if it is $K_t$-free. It is therefore tempting to ask whether even-hole-free graphs excluding a fixed complete graph have bounded treewidth. Sintiari and Trotignon \cite{layered-wheels} answered this with a vehement ``no'', providing a construction of (even-hole, $K_4$)-free graphs with arbitrarily large treewidth, hence proving that there are $t$-clean (even-hole-free) graphs of arbitrarily large treewidth for every fixed $t\geq 4$. In addition, graphs from this construction are sparse, in the sense that they exclude short holes.
\begin{theorem}[Sintiari and Trotignon \cite{layered-wheels}]\label{layeredwheel1}
    For all integers $w,l\geq 1$, there exists an (even-hole, $K_4$)-free graph $G_{w,l}$ of treewidth more than $w$ and with no hole of length at most $l$.
\end{theorem}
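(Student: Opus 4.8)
The plan is to produce $G_{w,l}$ by an iterative construction — a \emph{layered wheel}. Fix $w,l\geq 1$, put $k=w+2$, and fix in advance an enormous integer $N=N(w,l)$; every path length, subdivision count and interval size below will be taken large and of a parity dictated by a global bookkeeping scheme. We build a nested sequence $G_1\subseteq G_2\subseteq\cdots\subseteq G_k$ and set $G_{w,l}=G_k$. Here $V(G_i)$ is partitioned into nonempty \emph{layers} $L_1\ll L_i$, and $G_i$ carries a distinguished long induced path $P_i$ (the \emph{rim}) all of whose interior vertices have degree $2$ in $G_i$ — this degree-$2$ stretch is the ``room'' reserved for the next layer. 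We start with $G_1=P_1$, a path of length $N$, and $L_1=V(P_1)$.

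\emph{The layering step.} Given $G_i$ with rim $P_i$: first replace $P_i$ by a very long subdivision of itself (this changes $G_i$ only by lengthening, and is what will force future holes to be long), then cut a long middle portion of $P_i$ into many consecutive intervals $I_1,I_2,\dots$ of prescribed lengths. For each $I_j$ introduce a new vertex $h_j$ — a \emph{hub} — and join $h_j$ to a small, sparse, \emph{nested} set of already-present vertices: a bounded number of vertices inside $I_j$, together with one vertex on each rim $P_{j'}$ with $j'<i$, chosen so that as $j$ grows these hub-neighborhoods sweep monotonically along each rim below. Then, after subdividing each prospective edge $h_jh_{j+1}$ a prescribed number of times, add these connector paths, so that the hubs and connectors together form a new long induced path $P_{i+1}$ with degree-$2$ interior; put $L_{i+1}=V(P_{i+1})\setminus V(G_i)$, and let $G_{i+1}$ be $G_i$ with $P_{i+1}$ and the hub-edges added. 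The step is governed by two requirements: \textbf{(a)} the neighborhood of each hub induces no triangle other than an ``intended'' one, and two consecutive hubs share as few neighbors as possible; \textbf{(b)} every cycle formed by routing a detour from one hub down through some rims and back to another hub has length $>l$ and, crucially, \emph{odd} — the parities being arranged by the global choice of all subdivision lengths and all hub-neighbor positions.

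\emph{Verifications.} That $G_{w,l}$ has no hole of length $\leq l$ is an easy induction on $i$: a hole of $G_{i+1}$ not already present in $G_i$ must use a hub of $L_{i+1}$, hence must traverse a long subdivided connector or a long subdivided rim-arc, and $N$ was chosen to swamp $l$. That $G_{w,l}$ is $K_4$-free is again an induction: a $K_4$ of $G_{i+1}$ not contained in $G_i$ contains a hub of $L_{i+1}$, but by \textbf{(a)} no hub can be adjoined to a triangle — whether a triangle of $G_i$ or one using another hub. That $G_{w,l}$ is even-hole-free is the crux: inductively, every hole of $G_{i+1}$ either lies in $G_i$ or decomposes into rim-arcs of $G_i$ joined by hub-detours, and by \textbf{(b)} its length is odd. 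Finally, $\tw(G_{w,l})>w$: the sets $V(L_1)\ll V(L_k)$ are pairwise disjoint, each is connected (each $L_i$ lies on the path $P_i$), and each two of them are joined by an edge of $G_{w,l}$ (a new layer attaches to the previous rim, and by the nesting each hub is adjacent to a vertex of every rim below it); hence $K_k=K_{w+2}$ is a minor of $G_{w,l}$, and since treewidth is minor-monotone, $\tw(G_{w,l})\geq\tw(K_{w+2})=w+1>w$.

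\emph{The main obstacle.} Everything rests on satisfying \textbf{(b)} \emph{simultaneously} with the ``long holes'' requirement: one must fix, once and for all, the subdivision lengths of the rims and of the connectors, and the exact positions of the hub-neighbors on every rim below, so that \emph{every} cycle one can trace out — in particular a cycle combining two unrelated detours through two different layers — comes out odd while remaining long, and so that this is preserved when the next layer is added. Pinning down a single consistent parity pattern with all of these properties is the technical heart of the construction; the $K_4$-freeness and the treewidth bound are comparatively routine once the layered skeleton is in place.
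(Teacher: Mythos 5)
This is a result the paper cites from Sintiari and Trotignon and does not reprove, so there is no argument of the paper to compare against; your sketch has to be judged on its own. Your outline does capture the broad shape of the layered-wheel construction --- iteratively grown layers, each a long path, each hub attached downward through the levels --- and the treewidth bound via a $K_{w+2}$ minor with the layers as branch sets is clean once the ``each hub sees every lower rim'' property is secured.

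The genuine gap, which you flag yourself, is that the construction is never actually specified: the parity bookkeeping in requirement (b) --- the entire reason the theorem is true --- is deferred. Worse, (b) as stated would not suffice even if it could be arranged. You control only cycles that ``route a detour from one hub down through some rims and back to another hub,'' i.e., cycles passing through exactly two hubs of the new level. A hole can pass through $m\geq 3$ hubs, alternating rim-arcs and hub-detours, and its parity is not governed by the two-hub cycles: if, say, each hub-to-hub detour path has odd length, then a hole through an even number of hubs has even length. A complete argument must either rule out multi-hub holes structurally or control their parities directly, and neither is done. There is also a smaller internal inconsistency: you declare as an invariant that the interior of the rim $P_i$ has degree $2$ in $G_i$, yet the hubs in the interior of $P_{i+1}$ are, by your own step, adjacent to one vertex on every lower rim, so they have degree far exceeding $2$ in $G_{i+1}$; that invariant cannot survive past the first layering. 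It can be patched --- what you really need is that $P_{i+1}$ is induced and has a long degree-$2$ stretch left over --- but as written it fails. The outline is in the right spirit, but it is not yet a proof.
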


At the same time, $3$-clean even-hole-free graphs have treewidth at most five \cite{evenholetrianglefree}. So one may wonder whether all $3$-clean graphs have bounded treewidth. However, another construction by Sintiari and Trotignon \cite{layered-wheels} shows that in a superclass of even-hole-free graphs, namely the class of theta-free graphs, there $3$-clean graphs with arbitrarily large treewidth (see the next section for the definition of a theta; one may check that the every $t$-basic obstruction for $t\geq 3$ contains either a theta or a triangle). Indeed, the treewidth of theta-free graphs remains unbounded even when forbidding short cycles.

\begin{theorem}[Sintiari and Trotignon \cite{layered-wheels}]\label{layeredwheel2}
    For all integers $w,g\geq 1$, there exists a theta-free graph $G_{w,g}$ of treewidth more than $w$ and girth more than $g$.
\end{theorem}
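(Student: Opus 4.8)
The plan is to build $G_{w,g}$ explicitly as a \emph{layered wheel}: a graph assembled in a bounded number of ``layers'', where the number of layers is tuned so as to push the treewidth above $w$, and where every adjacency that is ever introduced is immediately subdivided more than $g$ times, so that girth above $g$ comes essentially for free. (One cannot instead take a known theta-free graph of large treewidth and subdivide it at the end: subdivision can create thetas --- for instance a subdivision of $K_4$, which is itself theta-free, contains an induced theta --- so the long paths must be built into the construction from the outset.)

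In more detail, I would produce a nested sequence $G_1\subseteq G_2\subseteq\cdots\subseteq G_k$, with $k=k(w)$ fixed at the end, carrying with each $G_i$ a family $\mathcal{Q}_i$ of pairwise vertex-disjoint induced paths (the ``active boundary'' of $G_i$, onto which later layers will attach). Take $G_1$ to be a single long induced path, with $\mathcal{Q}_1=\{G_1\}$. To pass from $G_i$ to $G_{i+1}$: for each $Q\in\mathcal{Q}_i$, introduce a new long induced path together with one or more new ``hub'' vertices, and join each hub to the union of $Q$ and the new path along a prescribed set of attachment points --- chosen rich enough that wheels centred at the hubs appear, but disciplined enough that no theta appears --- and then subdivide every newly added edge more than $g$ times; let $\mathcal{Q}_{i+1}$ consist of the new paths together with the sub-paths into which the old boundary paths are cut by the new attachment points. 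Put $G_{w,g}=G_k$. The exact attachment pattern (and how many hubs and how many new paths a layer spawns) is the real content of the construction.

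Three things then have to be checked. \emph{Girth:} a layer is glued on using only subdivided (length-$>g$) edges, and attaching a layer creates no short cycle locally, so every cycle of $G_k$ runs through a full subdivided segment and has length $>g$. \emph{Treewidth:} one maintains the invariant that $G_i$ carries a bramble (equivalently, a minor model, or ``no balanced separator of bounded size'') of order tending to infinity with $i$ --- the hubs $h_1,\dots,h_i$ together with connected chunks of the boundary paths form such a bramble precisely because the attachment rule forces every new hub to reach back to all earlier layers --- and then, by the standard bramble bound ($\tw$ is at least the bramble order minus one), one picks $k$ so that this order exceeds $w+1$; equivalently, one reads off a large clique-minor, or a subdivided wall as a subgraph, directly from the hubs and boundary paths. \emph{Theta-freeness:} induct on $k$, with base case $G_1$ a path. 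If $G_k$ contains a theta with branch vertices $a,b$ and internally disjoint induced paths $R_1,R_2,R_3$, let $j$ be the largest index for which the theta meets the $j$-th layer $V(G_j)\setminus V(G_{j-1})$. The attachment rule for layer $j$ should be arranged so that (i) every vertex of layer $j$ other than a hub has neighbours on only one boundary path, so it cannot serve as a branch vertex of a theta confined to the top two layers, and (ii) each hub $h_j$ separates the two boundary paths it joins. From (i) and (ii) one argues that either the theta already lies inside $G_{j-1}$ (contradicting the maximality of $j$, unless $j$ is minimal, which is the base case), or it must pass through a hub $h_j$ in a way that forces two of $R_1,R_2,R_3$ to share an internal vertex --- impossible. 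Making this last dichotomy precise, by a careful case analysis of the possible ``traces'' of a theta across a single layer, is the heart of the proof.

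The hard part is exactly this theta-exclusion step. The pressure to make treewidth large forces the construction to be locally dense --- crowded with wheels and near-complete-bipartite patches --- and one must show that none of these can be completed to a theta; designing a single attachment rule that is simultaneously rich enough to blow up treewidth, rigid enough to kill every theta, and built from long subdivisions to keep girth high --- and then proving that it does kill every theta --- is where essentially all the difficulty lies.
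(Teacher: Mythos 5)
This statement is a cited result (Sintiari--Trotignon, \cite{layered-wheels}); the present paper states it without proof, so there is no internal proof to compare against --- only the Sintiari--Trotignon construction itself. Your proposal correctly identifies that construction: layered wheels built layer by layer by attaching hub vertices to long boundary paths and subdividing every attachment, with the three verifications (girth, a treewidth lower bound via a bramble or clique minor supported by the hubs, and theta-freeness by induction on the top occupied layer) matching the overall shape of their argument. Your preliminary observation that one cannot simply subdivide a theta-free graph of large treewidth --- because subdivision can create thetas, e.g.\ a subdivided $K_4$ contains an induced theta even though $K_4$ itself is theta-free --- is correct and is exactly the reason the long paths must be built into the construction from the start rather than added at the end.

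That said, the proposal is a roadmap and not a proof: at each of the three decisive points you explicitly defer the content. You write that ``the exact attachment pattern \dots is the real content of the construction,'' that ``making this last dichotomy precise \dots is the heart of the proof,'' and that designing an attachment rule that both blows up treewidth and kills every theta ``is where essentially all the difficulty lies.'' These are accurate self-assessments, but they leave the attachment rule unspecified, the bramble or minor model unexhibited (the invariant ``every new hub reaches back to all earlier layers'' is asserted rather than derived from a rule), and the theta case analysis unwritten --- the dichotomy ``either the theta lies in $G_{j-1}$ or two of $R_1,R_2,R_3$ share an internal vertex'' is a target statement, not an argument, and the crucial property (ii) you posit for hubs is not obviously compatible with the density needed for the treewidth bound. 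What you have is a faithful description of what the Sintiari--Trotignon proof must accomplish; to make it a proof you would need to commit to a concrete attachment pattern (their layered-wheel rule), exhibit the bramble or $K_{k}$-minor explicitly, and carry out the theta case analysis against that fixed rule.
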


A natural question to ask, then, is what further conditions must be imposed to guarantee bounded treewidth in even-hole-free graphs. For instance, graphs from both Theorems~\ref{layeredwheel1} and \ref{layeredwheel2} have vertices of arbitrarily large degree, and so it was conjectured in \cite{aboulker} that (theta, triangle)-free graphs of bounded maximum degree have bounded treewidth and that even-hole-free graphs of bounded maximum degree have bounded treewidth. These were proved in  \cite{wallpaper} and \cite{TWI}, respectively. In the same paper
\cite{aboulker}, a stronger conjecture was made, asserting that basic obstructions are in fact the only obstructions to bounded treewidth in graphs of bounded maximum degree. This was later proved in  \cite{Korhonen}, which closed the line of inquiry into graph classes of bounded maximum degree.

\begin{theorem}[Korhonen \cite{Korhonen}] \label{boundeddegree}
For all integers $t, \delta \geq 1$, there exists $w = w(t, \delta)$ such that every $t$-clean graph of maximum degree at most $\delta$ has treewidth at most $w$.
\end{theorem}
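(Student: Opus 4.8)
I would prove Theorem~\ref{boundeddegree} through an \emph{induced} strengthening of the Grid Theorem (Theorem~\ref{wallminor}) valid in the bounded-degree regime. The plan is to first establish: for all $r,\delta\ge 1$ there is $k=k(r,\delta)$ such that every graph of treewidth more than $k$ and maximum degree at most $\delta$ contains the $(r\times r)$-grid as an \emph{induced minor} --- that is, obtainable from it by vertex deletions and edge contractions only --- and then to extract a $t$-basic obstruction from a large enough induced grid minor.

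For the extraction, suppose $G$ has maximum degree at most $\delta$ and contains a large induced grid minor with connected branch sets $B_v$. In an induced minor, non-adjacent branch sets have no edges between them, so the only obstructions to the union of the branch sets being an induced subdivided wall live \emph{between} and \emph{inside} adjacent branch sets: several joining edges, or attachment vertices inducing a cycle. The degree bound caps, per branch set, how many leaving edges must be tracked, so Ramsey-type pigeonholing on the pattern along consecutive rows and columns lets one pass to a large, uniform sub-grid; and since realising a wall (rather than the full grid) needs only three attachment vertices per branch set, on this sub-grid a vertex-minimal connected subgraph through the attachments of a branch set is a path or a subdivided claw unless those attachments induce a cycle. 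One therefore ends in one of two cases: either each adjacent pair of branch sets is joined by a single edge and no branch set's attachments induce a cycle, whence replacing each $B_v$ by such a subgraph yields an induced subdivision of a large wall; or cyclic attachment structure is pervasive, whence the branch sets overlap like edges of a wall sharing endpoints and one reads off an induced copy of the line graph of a large subdivided wall. Rescaling $r$ back down to the level of $t$ then produces an induced $t$-basic obstruction of wall or line-graph-of-wall type, while the remaining two types $K_t, K_{t,t}$ are vacuous once $t>\delta+1$ and handled directly otherwise. Hence a $t$-clean graph of maximum degree at most $\delta$ cannot contain the $(r(t,\delta)\times r(t,\delta))$-grid as an induced minor, and therefore has treewidth at most $k(r(t,\delta),\delta)=:w(t,\delta)$.

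The crux --- and where essentially all the work goes --- is the induced grid minor statement itself. I would build the model greedily, one ``cross'' of a wall-shaped model at a time, starting from a well-linked set $W$ (of size $\Omega(k)$, which exists since $G$ has treewidth more than $k$) and maintaining two invariants: the partial structure is an induced-minor model of the partial wall, and the unused part of $W$ still has linear size. To append a new cross, one routes a bounded number of pairwise-disjoint paths through the unused region between prescribed pieces of $W$; the essential use of the degree bound is that the structure built so far, of size polynomial in $r$ at every stage, has a closed neighbourhood of bounded size, so these routing paths can be deflected off everything except the specific vertices at which they must attach. It is exactly this negligibility of neighbourhoods of bounded sets --- available only when $\Delta(G)\le\delta$ --- that keeps the growing model induced. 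The genuine obstacle, as I see it, lies inside this step: each new cross must attach to the old model at precisely the prescribed vertices and yet create no spurious adjacency, even though ``connecting up'' inevitably drags the new paths close to the old structure. By comparison, the well-linked-set preliminaries, the cleanup of the branch sets, and the final rescaling are routine, and the line-graph branch of the extraction, though it requires some care, is of secondary difficulty.
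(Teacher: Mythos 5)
The paper does not prove this statement; it is quoted from Korhonen \cite{Korhonen} and used as a black box, so there is no internal argument to compare against. Your top-level reduction is nonetheless the right one: Korhonen's paper proves exactly your Step~1 (a large grid as an induced minor in any bounded-degree graph of large treewidth), and the passage from there to a $t$-basic obstruction, with $K_t$ and $K_{t,t}$ dismissed by the degree bound, is the standard finishing move. So the skeleton of the plan matches.

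As a proof, though, it is incomplete in both steps, and in Step~2 there is a concrete error. In Step~1 you yourself label the crucial routing step ``the genuine obstacle'' and leave it open; since that is where all of the content of the theorem sits, the appeal to a well-linked set plus bounded closed neighbourhoods is a heuristic rather than an argument (Menger gives paths into the unused region, but not paths that avoid the entire neighbourhood of the partial model away from the prescribed attachments). In Step~2, the claim that ``the degree bound caps, per branch set, how many leaving edges must be tracked'' is false: a branch set $B_v$ may be arbitrarily large, and although it is adjacent to only four other branch sets, each of its $|B_v|$ vertices can contribute up to $\delta$ edges, so the number of edges between two adjacent branch sets is a priori unbounded; the Ramsey/pigeonhole step therefore has no bounded ``type'' set to pigeonhole over. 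Relatedly, a vertex-minimal connected subgraph of $G[B_v]$ through the three attachment terminals need not be a path, a subdivided claw, or a cycle on the terminals --- for instance it can be a triangle on two terminals with a pendant path to the third --- so the either/or you read off (induced subdivided wall versus its line graph) does not follow from the dichotomy as stated. The extraction from induced grid minors to walls and their line graphs in bounded-degree graphs is indeed known and not prohibitively hard, but it is not ``routine,'' and your sketch does not close these gaps.
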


Despite its generality, the proof of Theorem~\ref{boundeddegree} is surprisingly short. However, the case of proper hereditary classes containing graphs of unbounded maximum degree seems to be much harder. For graph classes $\mathcal{G}$ and $\mathcal{H}$, let us say $\mathcal{H}$ \textit{modulates} $\mathcal{G}$ if for every positive integer $t$, there exists a positive integer $w(t)$ (depending on $\mathcal{G}$ and $\mathcal{H}$) such that every $t$-clean $\mathcal{H}$-free graph in $\mathcal{G}$ has treewidth at most $w(t)$. An induced subgraph analogue to Theorem~\ref{wallminor} is therefore equivalent to a full characterization of graph classes $\mathcal{H}$ which modulate the class of all graphs. This remains out of reach, but the special case where $|\mathcal{H}|=1$ turns out to be more approachable. For a graph $H$ and a graph class $\mathcal{G}$, let us say $H$ \textit{modulates} $\mathcal{G}$ if $\{H\}$ modulates $\mathcal{G}$.  Building on a method from \cite{lozin}, recently we characterized all graphs $H$ which modulate the class of all graphs:
\begin{theorem}[Abrishami, Alecu, Chudnovsky, Hajebi and Spirkl \cite{twvii}]\label{tw7}
    Let $H$ be a graph. Then $H$ modulates the class of all graphs if and only if $H$ is a subdivided star forest, that is, a forest in which every component has at most one vertex of degree more than two.
\end{theorem}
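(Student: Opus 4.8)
The plan is to prove the two implications of the biconditional separately, handling the ``only if'' direction by constructions and devoting the real effort to the ``if'' direction.

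\smallskip
\noindent\textbf{The ``only if'' direction.} Suppose $H$ is not a subdivided star forest; I will exhibit, for some fixed $t$, a family of $t$-clean $H$-free graphs of unbounded treewidth. If $H$ has a cycle, apply Theorem~\ref{layeredwheel2} with $g=|V(H)|$: the graphs it produces are theta-free and, having girth more than $3$, triangle-free, hence $3$-clean (every $t$-basic obstruction with $t\geq 3$ contains a theta or a triangle, as noted in the introduction); since they have girth more than $|V(H)|$ while $H$ has a cycle, they contain no subgraph, in particular no induced subgraph, isomorphic to $H$; and their treewidth is unbounded. If instead $H$ is a forest, then some component of $H$ contains two vertices of degree at least three, and then, reading off the path between two such vertices at minimum distance together with two private neighbours of each, $H$ contains as an induced subgraph a \emph{double spider}: a path with two pendant vertices attached at each end. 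It therefore suffices to build, for some fixed $t$, $t$-clean graphs of unbounded treewidth containing no fixed double spider as an induced subgraph; producing such graphs — which must avoid large cliques, large complete bipartite graphs, large subdivided walls and their line graphs while keeping unbounded treewidth and, in addition, avoid the double spider — is one of the two places where genuine ingenuity is needed.

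\smallskip
\noindent\textbf{The ``if'' direction: reduction to a uniform spider forest.} Say $H$ has $m$ components, each a subdivided star with at most $\ell$ legs, every leg of length at most $p$. Let $S_{p,\ell}$ be the subdivided star with exactly $\ell$ legs of length exactly $p$, and let $H^{\ast}$ be the disjoint union of $m$ copies of $S_{p,\ell}$. Each component of $H$ sits inside $S_{p,\ell}$ as an induced subgraph — use $\ell$ of the legs, truncated to the required lengths, deleting the unused legs and leg-ends — so $H$ sits inside $H^{\ast}$ as an induced subgraph; hence any graph containing an induced copy of $H^{\ast}$ also contains one of $H$, so every $H$-free graph is $H^{\ast}$-free. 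Thus it is enough to show that $H^{\ast}$ modulates the class of all graphs, for all $m,p,\ell$.

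\smallskip
\noindent\textbf{The ``if'' direction: the main argument.} Fix $t$ and let $G$ be $t$-clean of treewidth more than a bound $w=w(m,p,\ell,t)$ to be chosen; I want an induced copy of $H^{\ast}$ in $G$. If every vertex of $G$ had degree at most $\delta=\delta(m,p,\ell,t)$, then $G$ would have bounded treewidth by Theorem~\ref{boundeddegree}; so $G$ has many vertices of large degree. By the Grid Theorem~\ref{wallminor}, $G$ also contains a subdivision $W'$ of a huge wall as a subgraph. Now I would feed $W'$ into the wall-cleaning machinery of this series: passing to a huge subwall, one either uncovers an induced $t$-basic obstruction — impossible, as $G$ is $t$-clean — or obtains an induced \emph{clean} wall $W^{\ast}$, on which all but a bounded set of vertices of $G$ have boundedly many neighbours. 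A purely clean wall $W^{\ast}$ this large would itself be a bounded-degree $t$-clean graph of huge treewidth, again contradicting Theorem~\ref{boundeddegree}; so the cleaning must in fact present us with large-degree vertices interacting in a controlled way with $W^{\ast}$. One such vertex $v$ has a large $K_{t-1}$-free neighbourhood on $W^{\ast}$, hence (by Ramsey) a large independent set $I$ of neighbours on $W^{\ast}$; picking $\ell$ members of $I$ that are far apart in the wall metric — possible because the wall has polynomial growth — and routing from each of them an induced path of length $p$ into $W^{\ast}$, the routes pairwise anticomplete and anticomplete to the rest of $N(v)$ (which the cleaning makes possible), yields an induced $S_{p,\ell}$ with centre $v$. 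To get $m$ pairwise-anticomplete copies, one chops $W^{\ast}$ into $m$ huge pairwise-anticomplete subwalls separated by wide buffers (after a further cleaning step bounding chord lengths, so that no $G$-edge crosses between pieces), assigns a private large-degree centre to each, and repeats the routing. The result is an induced $H^{\ast}\subseteq G$, contradicting $H^{\ast}$-freeness once $w$ is large enough.

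\smallskip
\noindent\textbf{The main obstacle.} I expect the crux to be the middle step of the last paragraph: upgrading ``$G$ contains a wall subdivision as a \emph{subgraph}'' — all that the Grid Theorem yields — to ``$G$ contains an induced clean wall together with a controlled supply of large-degree apex vertices, partitioned into anticomplete pieces'', since the induced subgraph on the vertex set of a wall subdivision can a priori be arbitrarily dense. This is exactly where $t$-cleanness does its work, and it is the sort of ``wall cleaning'' this series specialises in; the routing of the spider legs is then fiddly but routine. On the negative side, the double-spider-free construction for the forest case of the ``only if'' direction is the other point demanding real ingenuity.
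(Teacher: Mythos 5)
This theorem is stated in the paper only as a citation of~\cite{twvii}; the paper under review contains no proof of it, so there is no in-paper argument to compare yours against. Assessed on its own terms, your outline has the right skeleton---the cycle case via Theorem~\ref{layeredwheel2}, the reduction of the remaining forests to a double spider, and the reduction of a general subdivided star forest to a uniform one $H^{\ast}$ are all correct---but both of the two steps you yourself flag as ``requiring genuine ingenuity'' are genuine gaps, and in both places the route you gesture at is not the one this series actually uses.

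For the ``only if'' direction in the forest case you must produce, for a fixed $t$, $t$-clean double-spider-free graphs of unbounded treewidth; neither construction you cite (Theorems~\ref{layeredwheel1} and \ref{layeredwheel2}) avoids double spiders, and you supply no replacement, so this direction is unproved. For the ``if'' direction, the wall-cleaning sketch does not survive scrutiny: a ``huge \emph{induced} clean wall $W^{\ast}$'' would contain an induced subdivision of $W_{t\times t}$ as a subwall, i.e.\ a $t$-basic obstruction, so in a $t$-clean $G$ the favourable branch of your dichotomy simply cannot occur once the wall is large, and the routing argument downstream has nothing to route into; the subsequent claim that such a $W^{\ast}$ ``would itself be a bounded-degree $t$-clean graph'' is self-contradictory for the same reason. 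More fundamentally, no structure theorem for general $t$-clean graphs of large treewidth and unbounded degree exists in this series---that absence is precisely why the series keeps imposing extra hypotheses. The ingredients the present paper imports from~\cite{twvii}---Theorem~\ref{noblocksmalltw_wall} (a $t$-clean graph of large treewidth contains a strong $k$-block) and Theorem~\ref{connectifier} (the connectifier extracting a caterpillar, a line graph of a caterpillar, or a subdivided star from a marked set in a $K_h$-free connected graph)---point to the actual strategy: obtain a strong block, take two block vertices joined by many internally disjoint paths, and use $K_t$- and $K_{t,t}$-freeness together with the connectifier on the union of those paths (minus the apex) to extract a subdivided star rooted at the apex, then iterate to assemble the forest. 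No wall, and no wall cleaning, should appear. Your proposal is best viewed as a different and, as written, unsound strategy rather than a variant of the cited proof.
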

In general, for a hereditary class $\mathcal{G}$ containing $t$-clean graphs of arbitrarily large treewidth for small $t$, one can ask for a characterization of graphs $H$ modulating $\mathcal{G}$. Given Theorem~\ref{layeredwheel1}, a natural class $\mathcal{G}$ to consider is the class of even-hole-free graphs. Note that Theorem~\ref{layeredwheel1} shows that a graph $H$ modulates even-hole-free graphs only if $H$ is a chordal graph (that is, a graph with no hole) of clique number at most three. As far as we know, the converse may also be true, that every chordal graph of clique number at most three modulates even-hole-free graphs. In fact, in this paper we narrow the gap, showing that every chordal graph of clique number at most two, that is, every forest, modulates the class of even-hole-free graphs.
\begin{theorem}\label{mainevenhole}
    For every forest $H$ and every integer $t\geq 1$, every even-hole-free graph of sufficiently large treewidth contains either $H$ or a clique of cardinality $t$.
\end{theorem}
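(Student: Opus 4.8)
The plan is to prove the equivalent statement that there is a constant $w=w(H,t)$ such that every even-hole-free graph $G$ with $\tw(G)>w$ contains $K_t$ or an induced copy of $H$. We may assume $G$ is $K_t$-free (otherwise $G$ already contains a clique of cardinality $t$); since complete graphs are the only even-hole-free basic obstructions, $G$ is then $t$-clean, and moreover $G$ is (theta, prism)-free, because a short parity argument shows that every theta and every prism contains an even hole --- so the argument will actually work in this larger generality. Two reductions come first. First, we may assume $H$ is a \emph{tree}: adjoining to a forest one new vertex adjacent to exactly one vertex of each component yields a tree $T$ with $T$ minus that vertex isomorphic to $H$, so it suffices to find an arbitrary prescribed tree. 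Second, rooting a tree $T$ on $n$ vertices at a leaf, $T$ embeds as an \emph{induced} subgraph of the graph $U_n$ obtained from the complete $(n{+}1)$-ary tree of depth $n$ by subdividing every edge $n$ times: adjacency in a tree is exactly the parent--child relation, so the obvious level-by-level embedding is automatically induced (the subdivisions play no role here, but will make the chord control below cleaner). Thus the task becomes: show that every $t$-clean even-hole-free graph of sufficiently large treewidth contains $U_n$, i.e.\ a large, highly branching, long-subdivided induced tree.

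The bounded-degree skeleton of $U_n$ is obtained along the usual route: by Theorem~\ref{wallminor}, $G$ contains a subdivision of a large wall, and, using $t$-cleanness together with (theta, prism)-freeness, the cleaning machinery of this series (as in \cite{wallpaper}) refines it to a \emph{clean} induced subdivided wall $W$ --- one to which no vertex of $G$ attaches so as to create a theta, a prism, or a $K_t$. A clean subdivided wall of side $m$ contains, for $m$ large, an induced subdivided complete binary tree of depth $\Theta(\log m)$, which already realizes the part of $U_n$ carried by vertices of degree at most three. It does not, however, provide the high-degree branch vertices of $U_n$, and a subdivided wall never will since its maximum degree is three; here we are forced to bring in the high-degree vertices of $G$ itself.

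This upgrade is the step I expect to be the main obstacle. By Theorem~\ref{boundeddegree}, every $t$-clean graph of sufficiently large treewidth has a vertex of large degree, and, using the size and connectivity of $W$, one can locate many such vertices sitting in pairwise disjoint regions of $W$. The neighbourhood of each is $K_{t-1}$-free, so Ramsey's theorem produces inside it a large independent set, i.e.\ a large induced star centred at that vertex. The heart of the proof is then to glue $n$ such stars, one in each of $n$ disjoint regions of $W$, to the subdivided binary skeleton and to prune the whole configuration --- centres, selected leaves, and the wall-paths joining them --- down to an induced copy of $U_n$, hence of $T$. Even-hole-freeness is exactly what makes this possible: an even shortcut is forbidden, which rigidly restricts which chords can appear among the stars and the skeleton and lets one certify non-adjacency of the surviving leaves and connecting paths; and (theta, prism)-freeness controls, for each star centre, how it and its leaves may see the rest of $W$, which is precisely where wheels and thetas threaten to appear and must be excluded or routed around. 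The technical core is therefore the bookkeeping: selecting the leaves of each star that survive into $U_n$, re-cleaning the configuration after each star is attached, and verifying that the result is induced. Alternatively, one may organize the whole argument as an induction on the number of vertices of degree at least three in $H$: the base case, where $H$ has at most one such vertex per component --- i.e.\ $H$ is a subdivided star forest --- is immediate from Theorem~\ref{tw7}, and the inductive step removes one branch vertex from $H$ and re-invokes the (theta, prism)-free cleaning to re-embed the remainder.
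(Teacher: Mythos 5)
Your plan runs into a fatal obstacle at what you call the first routine step. You propose to take the wall subdivision guaranteed by Theorem~\ref{wallminor} as a subgraph and ``refine it to a clean \emph{induced} subdivided wall $W$'' using the machinery of \cite{wallpaper}. But under the hypotheses you have set up, $G$ is $t$-clean, which \emph{by definition} means $G$ contains no induced subdivision of $W_{t\times t}$; indeed, for theta-free and prism-free graphs, excluding $K_t$ already implies $t$-cleanness, so the large induced clean wall you want cannot exist in $G$ at all. Theorem~\ref{layeredwheel2} makes this concrete: there are theta-free graphs of arbitrarily large treewidth and girth, and these contain no induced subdivided wall of any size (since any wall of side at least $3$ contains a theta). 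The cleaning lemmas you allude to only produce a clean wall under an additional bounded-degree hypothesis (that is precisely the content of Theorem~\ref{boundeddegree} and of \cite{wallpaper}); the entire difficulty in the unbounded-degree regime, which is the regime of this paper, is that no such wall is available and some other certificate of large treewidth must be found. Separately, your claim that every tree $T$ on $n$ vertices is an induced subgraph of $U_n$ (the $(n{+}1)$-ary tree of depth $n$ with every edge subdivided $n$ times) is false: in $U_n$ the branch vertices are pairwise non-adjacent, so any vertex of $T$ with more than one neighbour that is also a branch vertex has nowhere to go. This part is repairable, but the wall step is not.

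The paper takes a genuinely different route that never looks for a wall. It replaces the Grid Theorem by Theorem~\ref{noblocksmalltw_wall} (via Corollary~\ref{noblocksmalltw_Ct}): a $t$-clean graph of large treewidth contains a \emph{strong $k$-block}, i.e.\ a set of vertices that are pairwise joined by many internally disjoint paths. From two non-adjacent vertices $a,b$ of the block, one has many internally disjoint $a$--$b$ paths; the strip-structure machinery of Sections~\ref{sec:pyramids}--\ref{sec:stripconnectivity}, together with Menger's theorem, is used in Theorem~\ref{banana} to force, among the neighbours of $a$ on these paths, a large set with a nested ``transitive'' attachment pattern. Iterating this (Lemma~\ref{gettreelemma}) produces $T_f^f$ as a \emph{subgraph} rooted at $a$, and then the Kierstead--Penrice theorem (Theorem~\ref{inducedtree}) converts this, using $(K_{s,s}, K_t)$-freeness, into an \emph{induced} $T_d^r$ and hence an induced copy of $F$. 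None of this touches walls or wall-cleaning. Your fallback suggestion --- induction on the number of branch vertices of $H$, with base case Theorem~\ref{tw7} --- identifies a valid base case, but the inductive step as you describe it (delete a branch vertex and ``re-embed the remainder'') is not an argument: there is no mechanism offered for reattaching the deleted high-degree vertex to the induced copy of the smaller forest while keeping everything induced and avoiding thetas and prisms. That is exactly the hard part, and it is where the strip-structure analysis in the paper does the actual work.
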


This aligns with the observation \cite{Trotignon} that every forest is contained in some graph $G_{w,l}$ from Theorem~\ref{layeredwheel1}. As mentioned above, one way to improve on Theorem~\ref{mainevenhole} is to push $H$ towards being an arbitrary chordal graph of clique number three. Another way to strengthen Theorem~\ref{mainevenhole} is to find a superclass $\mathcal{G}$ of even-hole-free graphs for which forests are the only graphs modulating $\mathcal{G}$. While the former remains open, we provide an appealing answer to the latter: our main result shows that forests are exactly the graphs which modulate the class of (theta, prism)-free graphs (see the next section for the definition of a prism; again one may check that in (theta, prism)-free graphs, being $t$-clean is equivalent to being $K_t$-free for every positive integer $t$). 

\begin{theorem}\label{mainfull}
    Let $H$ be a graph. Then $H$ modulates (theta, prism)-free graphs if and only if $H$ is a forest. In other words, given a graph $H$, for every integer $t\geq 1$, every (theta, prism)-free graph of sufficiently large treewidth contains either $H$ or a clique of cardinality $t$, if and only if $H$ is a forest.
\end{theorem}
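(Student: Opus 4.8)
We prove the two implications separately; the forward direction is short, while the converse carries all the weight. \emph{Necessity} is immediate from Theorem~\ref{layeredwheel2}: suppose $H$ is not a forest and let $\ell\ge 3$ be the length of some induced cycle of $H$. For every $w\ge 1$, Theorem~\ref{layeredwheel2} gives a theta-free graph $G_{w,\ell}$ of treewidth more than $w$ and girth more than $\ell$. Since $\ell\ge 3$, $G_{w,\ell}$ is triangle-free, hence prism-free (every prism contains a triangle); and since, as noted in the introduction, every $3$-basic obstruction contains a theta or a triangle, $G_{w,\ell}$ is $3$-clean. Finally $G_{w,\ell}$ has no induced cycle of length at most $\ell$, so no induced copy of $H$. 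As $w$ was arbitrary, $H$ does not modulate (theta, prism)-free graphs (witnessed by $t=3$).

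\emph{Sufficiency} is the substantial direction. Fix a forest $H$ and an integer $t\ge 1$, and set $k=|V(H)|$. First I would pass to a single universal target: there is a tree $U_k$ --- for instance the complete $k$-ary tree of depth $k$ --- such that every tree on at most $k$ vertices occurs in $U_k$ as an induced subgraph, and hence every forest on at most $k$ vertices occurs as an induced subgraph in the disjoint union $\mathcal U_k$ of $k$ copies of $U_k$. Thus it suffices to prove: for all $k,t\ge 1$ there is $w=w(k,t)$ such that every (theta, prism)-free graph of treewidth more than $w$ contains $K_t$ or an induced copy of $\mathcal U_k$.

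I would prove this by an inductive construction that grows the tree one layer (or one leaf, or one leg) at a time, carrying a \emph{robustness} invariant: the partial induced subtree $U'$ built so far comes equipped, at each of its leaves $x$, with a private attachment at $x$ --- a subgraph $B_x$ that is vertex-disjoint from $U'$, anticomplete to $V(U')\setminus\{x\}$, has a vertex adjacent to $x$, and still has treewidth above a much larger threshold. To attach new children at $x$, one works inside $B_x$: split it into $k$ pairwise anticomplete induced subgraphs, each again of large treewidth and each joined to $x$ by a path inside $B_x$; the first interior vertices of these paths become the new children of $x$, and the leftover treewidth inside the $k$ pieces refreshes the invariant at the new leaves. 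Disjoint unions (the passage from $U_k$ to $\mathcal U_k$) are handled in the same way, by splitting off pairwise anticomplete large-treewidth pieces, and the base case --- when $U'$ is a subdivided star forest --- is exactly the content of Theorem~\ref{tw7}. The routine ingredients here, Ramsey-style cleanups and separator bookkeeping, follow the method of \cite{twvii}.

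All of this rests on one structural statement, and this is the only place where (theta, prism)-freeness is used: in a (theta, prism)-free graph $G$ of large treewidth with no $K_t$, one must be able to produce the private large-treewidth attachments that feed the induction --- in effect, a structure theorem for (theta, prism)-free graphs of large treewidth. I would start from Theorem~\ref{wallminor}, which puts a subdivision $W$ of a large wall in $G$ as a subgraph; crucially $W$ is \emph{not} induced, because a subdivided wall already contains a theta --- this is precisely why the wall cannot simply be ``cleaned'', so one must instead analyse the chords of $W$ and the attachments of $V(G)\setminus V(W)$ to $W$. The governing mechanism is that a vertex, or a small connected set, with neighbours in two sufficiently far-apart parts of $W$ completes, together with subpaths of $W$, a theta, a prism, or a pyramid: the first two are forbidden outright, and a pyramid --- which contains a triangle --- can be controlled using $K_t$-freeness. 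I expect this to yield a dichotomy: either all attachments are local enough that bounded surgery on $W$ already exposes the required anticomplete large-treewidth pieces, or the attachments accumulate into a hub-like region, as in the Sintiari and Trotignon layered wheels, which one then argues must contain $K_t$ or the desired structure. Making this dichotomy precise --- organising rather than deleting the chords of $W$, handling pyramids, and controlling many overlapping attachments at once --- is the main obstacle. As a by-product one recovers Theorem~\ref{mainevenhole}, since every theta and every prism contains an even hole, and so even-hole-free graphs are (theta, prism)-free.
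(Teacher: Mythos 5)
Your necessity argument is correct and matches the paper exactly: Theorem~\ref{layeredwheel2} produces theta-free, triangle-free (hence prism-free), $3$-clean graphs of large treewidth and girth, and taking the girth beyond the shortest induced cycle of $H$ finishes it.

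The sufficiency direction, however, has a genuine gap, and you essentially flag it yourself. The entire inductive scheme ``rests on one structural statement,'' which you describe only loosely as a dichotomy obtained by analysing the chords of a (non-induced) subdivided wall: either attachments are local and bounded surgery exposes anticomplete large-treewidth pieces, or attachments accumulate into a hub and force $K_t$ ``or the desired structure.'' Neither horn is established, and the second one is especially suspect: the Sintiari--Trotignon layered wheels are precisely graphs where attachments accumulate into a hub without producing a large clique, and the paper must work through Sections~\ref{sec:pyramids}--\ref{sec:stripconnectivity} (pyramids with trapped apices, strip-structures, jewels, and the Menger-type separation argument of Theorem~\ref{mainjewelconnectivity}) to control exactly this configuration. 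Your base-case appeal to Theorem~\ref{tw7} also does not obviously supply the invariant you want, since that theorem gives an induced subdivided star forest in graphs of large treewidth, not a partial tree together with private large-treewidth attachments at its leaves.

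It is worth noting how different the paper's route is, because it sidesteps your main obstacle. The paper never analyses a wall: Theorem~\ref{wallminor} appears only as background. Instead it starts from a strong $k$-block (Corollary~\ref{noblocksmalltw_Ct}, ultimately via the Erde--Wei\ss auer decomposition), giving two vertices $a,b$ joined by many internally disjoint paths. It then runs a Menger-style alternative: either one can separate $b$ from $a$ by a small set --- and Theorem~\ref{noseed}, built from the strip-structure/jewel machinery applied to a pyramid with trapped apex, shows this must happen if the paths organize into a line-graph-of-a-caterpillar pattern --- or one cannot, in which case Theorem~\ref{banana} and Lemma~\ref{gettreelemma} grow a large $T_d^r$ as a \emph{subgraph}. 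Only at the very end does Kierstead--Penrice (Theorem~\ref{inducedtree}) convert the tree subgraph into an induced tree, using $K_{s,s}$- and $K_t$-freeness. So the paper builds the tree as a subgraph first and cleans up afterwards, whereas your plan tries to maintain an induced tree together with a treewidth invariant at every step, which is much harder to sustain and is not carried out.
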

Let $\mathcal{C}$ be the class of all  (theta, prism)-free graphs. It is easily seen that $\mathcal{C}$ contains all even-hole-free graphs, and so Theorem~\ref{mainfull} implies Theorem~\ref{mainevenhole}. Note that the ``only if'' direction of Theorem~\ref{mainfull} follows immediately from Theorem~\ref{layeredwheel2} as prisms contain triangles. Since every forest is an induced subgraph of a tree, in order to prove Theorem~\ref{mainfull}, it suffices to prove Theorem~\ref{mainthm} below, which we do in Section~\ref{getatree}. For a positive integer $t$ and a tree $F$, we denote by $\mathcal{C}_t$ the class of all graphs in
$\mathcal{C}$ with no clique of cardinality $t$ (that is, $t$-clean graph in $\mathcal{C}$), and by $\mathcal{C}_t(F)$ the class of all $F$-free graphs in $\mathcal{C}_t$.

\begin{theorem}\label{mainthm}
  For every tree $F$ and every integer $t \geq 1$, there exists an integer $\tau(F,t)\geq 1$ such that
  every graph in $\mathcal{C}_t(F)$ has treewidth at most $\tau(F,t)$.
\end{theorem}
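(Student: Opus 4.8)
The plan is to argue by contradiction: assuming a graph $G\in\mathcal{C}_t(F)$ of treewidth larger than a bound to be chosen, we will locate an induced copy of $F$ in $G$. Two routine reductions come first. Since the treewidth of a graph equals the maximum treewidth of the atoms of its decomposition along clique cutsets, and since $\mathcal{C}_t(F)$ is hereditary, we may assume that $G$ has no clique cutset; in particular every clique of $G$ has fewer than $t$ vertices. Second, every tree $F$ is an induced subgraph of the complete $d$-ary tree of depth $h$ as soon as $d\geq\Delta(F)$ and $h$ is at least the diameter of $F$: root $F$ at any vertex and embed it greedily, sending the children of each vertex into distinct subtrees of the big tree; since both graphs are trees and the embedding respects the rooted structure, it is automatically induced. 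Hence it suffices to prove that for all $d,h,t\geq 1$ there is $\tau$ with the property that every graph in $\mathcal{C}_t$ with no clique cutset and treewidth more than $\tau$ contains an induced complete $d$-ary tree of depth $h$.

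Since $\tw(G)$ is large, the Grid Theorem (Theorem~\ref{wallminor}) produces a subdivision of a large wall as a subgraph of $G$. As $G$ is $t$-clean this subdivided wall cannot be an induced subgraph, so the next step is to \emph{clean} it: understand which vertices of $G$ lie outside the wall and how they attach to it. This is where the hypothesis that $G$ is (theta, prism)-free does the main work, via the elementary but strong fact that in a theta-free graph no vertex has three pairwise non-consecutive neighbours on a hole, and a vertex with exactly two neighbours on a hole has them adjacent on the hole. Consequently the long induced paths and long holes of $G$ are ``seen'' only locally by the rest of the graph, and by iterating this --- drawing on the wall-cleaning machinery developed in the earlier parts of this series --- I would extract a large \emph{induced} subgraph $W$ of $G$ that is still wall-like (long, pairwise non-adjacent induced paths crossing in a grid pattern) and whose interaction with every vertex of $G\setminus V(W)$ is bounded in size and of a controlled shape.

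It remains to build the induced complete $d$-ary tree inside this structure. The wall $W$ alone has maximum degree three, so it easily supplies induced subtrees of bounded degree; to achieve branching of unbounded degree one must additionally harvest many disjoint large induced stars, which are present in any $t$-clean graph of large treewidth because a disjoint union of stars is a subdivided star forest (Theorem~\ref{tw7}). The delicate point --- and the one I expect to be the principal obstacle, together with the cleaning step --- is to fit these ingredients together coherently: route the branches of the target tree along internally disjoint induced subpaths of $W$, plant the required stars at the branch points, and check that the resulting subgraph is induced, that is, that none of the chords forbidden by theta-freeness and prism-freeness can arise among the routed paths, or between the paths and the planted stars. Carrying this out yields an induced complete $d$-ary tree of depth $h$, hence an induced copy of $F$, contradicting $G\in\mathcal{C}_t(F)$ and proving Theorem~\ref{mainthm}.
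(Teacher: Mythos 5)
Your reduction of $F$ to an induced subgraph of a complete $d$-ary tree of bounded depth is sound and matches the paper's use of $T_d^r$ with $d$ the maximum degree and $r$ the radius of $F$. Beyond that, however, the plan diverges from the paper and contains a gap that I do not see how to repair.

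The central problem is the cleaning step. You propose to start from the Grid Theorem, obtain a subdivided wall as a subgraph, and ``clean'' it to extract a large \emph{induced} subgraph $W$ of $G$ that is still ``wall-like (long, pairwise non-adjacent induced paths crossing in a grid pattern).'' But $G$ is $t$-clean by hypothesis, which by definition means $G$ has no induced subdivision of $W_{t\times t}$ and no induced line graph of one. A ``wall-like'' collection of long pairwise non-adjacent induced paths crossing in a grid pattern \emph{is} an induced subdivided wall (or close enough that the same obstruction applies), so the object you want to extract cannot exist. The wall-cleaning machinery in this series is designed to show that, under suitable hypotheses, a wall subgraph \emph{forces} an induced basic obstruction --- and the interesting regime, which this paper lives in, is precisely where cleaning \emph{must fail}. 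Where it fails is where all the structure is, and your proposal does not say what you would do there.

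The paper takes a different route that sidesteps this entirely. It never invokes the Grid Theorem; instead it uses Corollary~\ref{noblocksmalltw_Ct} (built on Theorem~\ref{noblocksmalltw_wall}) to extract a strong $k$-block, i.e., two vertices $a,b$ joined by many pairwise internally disjoint paths. The tree is then grown as a \emph{subgraph}, not an induced subgraph: Lemma~\ref{gettreelemma} inductively produces a subgraph copy of $T_f^f$ from these path systems, and only at the very end is the Kierstead--Penrice theorem (Theorem~\ref{inducedtree}) used to pass from a subgraph $T_f^f$ to an \emph{induced} $T_d^r$ in a $(K_{3,3},K_t)$-free graph. This two-stage subgraph-then-induced scheme is what avoids the need to control chords by hand; your proposal tries to build the induced tree directly and, unsurprisingly, runs into exactly the chord-control difficulty you flag at the end but do not resolve. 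Likewise, your appeal to Theorem~\ref{tw7} to ``harvest stars'' does not engage with the real difficulty: what the paper needs, and proves via the entire strip-structure, pyramid and jewel apparatus of Sections~\ref{sec:pyramids}--\ref{sec:stripconnectivity} culminating in Theorem~\ref{noseed} and Theorem~\ref{banana}, is that the many internally disjoint $a$--$b$ paths branch in a controlled way so that the induction in Lemma~\ref{gettreelemma} can recurse. Nothing in your proposal plays that role.

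A small additional slip: you say that after eliminating clique cutsets ``in particular every clique of $G$ has fewer than $t$ vertices'' --- that bound is already built into the hypothesis $G\in\mathcal{C}_t$ and has nothing to do with clique cutsets; the clique-cutset reduction is not needed in the paper and does not supply it.
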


We conclude this introduction by sketching our proofs (the terms we use here are defined in
later sections). The proof of Theorem~\ref{mainthm} begins with a two-step preparation. As the first step, inspired by a result from \cite{bisimp2}, we show that for every graph $G\in \mathcal{C}$ which contains a pyramid with certain conditions on the apex and its neighbors, $G$ admits a construction which we call a ``$(T,a)$-strip-structure,'' where $a$ is the apex of the pyramid and $T$ is an optimally chosen tree.
Roughly speaking, we show that $G\setminus \{a\}$ can be partitioned into two induced subgraphs $H$ and $J$ where $H$ is more or less similar to the line graph of the tree $T$ and every vertex in $J$ with a neighbor in $H$ attaches at a pyramid lurking in $H$ in a restricted way; we call the latter vertices ``jewels.'' The proof of this theorem occupies Sections~\ref{sec:pyramids} and \ref{sec:strips}. The second step is to employ the previous result to show that if $G \in \mathcal{C}_t$ admits a $(C,a)$-strip-structure where $C$ is a caterpillar, then every vertex in $G \setminus N_G[a]$ can be separated from $a$ by removing a few vertices (our proof works more generally when $C$ is any tree of bounded maximum degree, but the caterpillar case suffices for our application). We prove this in Section~\ref{sec:stripconnectivity}. The central difficulty in the proof is to deal with the jewels separately. This is surmounted in Section~\ref{sec:jewelsconnectivity} where we prove several results concerning the properties of jewels. Most notably, we show that jewels only attach at ``local areas of the line-graph-like part'' of $G$, and that only a few jewels attach at each local area. This concludes the preparation for proving  Theorem~\ref{mainthm}.

Next, we embark on the proof of Theorem~\ref{mainthm}. We assume that $G \in \mathcal{C}_t$ has large treewidth, which together with results from
Section~\ref{defns} implies that $G$ contains two vertices $x,y$ joined by
many pairwise internally disjoint induced paths $P_1, \dots, P_m$. Now we analyze the structure of the graph $G[P_1 \cup \dots \cup P_m]$. It turns out that, if $m$ is large enough, then either
\begin{itemize}
\item there are many paths among $P_i$'s whose union $H$ admits a $(C,x)$-strip-structure for some caterpillar $C$, or 
  \item for some large value of $d$, $G[P_1 \cup \dots \cup P_m]$ contains a tree $S$ isomorphic to the complete bipartite graph $K_{1,d}$, such that $x$ is the vertex of degree $d$ in $S$, and for every leaf $l$ of $S$, there are many pairwise internally disjoint induced paths between $l$ and $y$, such that in addition, paths corresponding to distinct leaves of $S$ are also pairwise internally disjoint.
\end{itemize}

The former case implies that $y$ can be separated from $x$ by removing few vertices, which using a result from Section~\ref{sec:stripconnectivity}, yields a contradiction with Menger's theorem.
The latter case is the first step towards building the large tree in $G$ as a subgraph. We now iterate the argument we just described, applying it to each leaf $l$ of $S$ and $y$, obtaining larger and larger trees. The process is stopped once we reach a sufficiently large tree as a subgraph of $G$. This, combined with the fact that $G\in \mathcal{C}_t$ and a result of Kierstead and Penrice \cite{KP}, yields the desired tree $F$ as an induced subgraph of $G$.

This paper is organized as follows. Section~\ref{defns} covers preliminary definitions as well as some results from the literature used in our proofs. Section~\ref{sec:pyramids}
investigates the behavior of pyramids in graphs from $\mathcal{C}$. Section~\ref{sec:strips} is devoted to defining strip-structures and jewels, and
showing how they arise from pyramids in graphs in $\mathcal{C}$. Section~\ref{sec:jewelsconnectivity} takes a closer look at jewels for the strip-structures obtained in Section~\ref{sec:strips}. In Section~\ref{sec:stripconnectivity} we show that admitting certain strip-structures weakens the connectivity of most vertices to the apex. Finally, in Section~\ref{getatree}, we prove
Theorem~\ref{mainthm}.

\section{Preliminaries and results from the literature}
\label{defns}
Let $G = (V(G),E(G))$ be a graph. For a set $X \subseteq V(G)$ we denote by $G[X]$ the subgraph of $G$ induced by $X$. For $X \subseteq V(G)\cup E(G)$, $G \setminus X$ denotes the subgraph of $G$ obtained by removing $X$. Note that if $X\subseteq V(G)$, then $G \setminus X$ denotes the subgraph of $G$ induced by $V(G)\setminus X$.  In this paper, we use induced subgraphs and their vertex sets interchangeably.

Let $x\in G$ and let $d$ be a positive integer. We denote by $N^d_G(x)$ the set of all vertices in $G$ at distance $d$ from some $x$, and by $N^d_G[x]$ the set of all vertices in $G$ at distance at most $d$ from $x$. We write $N_G(x)$ for $N^1_G(x)$ and $N_G[x]$ for $N^1_G[x]$. For an induced subgraph $H$ of $G$, we define $N_H(x)=N_G(x) \cap H$, $N_H[x]=N_G[x]\cap H$. Also, for $X\subseteq G$, we denote by $N_G(X)$ the set of all vertices in $G\setminus X$ with at least one neighbor in $X$, and define $N_G[X]=N_G(X)\cup X$. 

 Let $X,Y \subseteq G$ be disjoint. We say $X$ is \textit{complete} to $Y$ if all edges with an end in $X$ and an end in $Y$ are present in $G$, and $X$ is \emph{anticomplete}
to $Y$ if no edges between $X$ and $Y$ are present in $G$.

A {\em path in $G$} is an induced subgraph of $G$ that is a path. If $P$ is a path in $G$, we write $P = p_1 \dd \cdots \dd p_k$ to mean that $V(P) = \{p_1, \dots, p_k\}$ and $p_i$ is adjacent to $p_j$ if and only if $|i-j| = 1$. We call the vertices $p_1$ and $p_k$ the \emph{ends of $P$}, and say that $P$ is \emph{from $p_1$ to $p_k$}. The \emph{interior of $P$}, denoted by $P^*$, is the set $P \setminus \{p_1, p_k\}$. The \emph{length} of a path is its number of edges (so a path of length at most one has empty interior). Similarly, if $C$ is a cycle, we write $C = c_1 \dd \cdots \dd c_k\dd c_1$ to mean that $V(C) = \{c_1, \dots, c_k\}$ and $c_i$ is adjacent to $c_j$ if $|i-j|\in \{1,k-1\}$. The \textit{length} of a cycle is its number edges (or equivalently, vertices.)

A {\em theta} is a graph $\Theta$ consisting of two non-adjacent vertices $a, b$, called the \textit{ends of $\Theta$}, and three pairwise internally disjoint paths $P_1, P_2, P_3$ from $a$ to $b$ of length at least two, called the \textit{paths of $\Theta$}, such that $P_1^*, P_2^*, P_3^*$ are pairwise anticomplete to each other. For a graph $G$, by a \textit{theta in $G$} we mean an induced subgraph of $G$ which is a theta.

A {\em prism} is a graph $\Pi$ consisting of two disjoint triangles $\{a_1,a_2,a_3\}, \{b_1,b_2,b_3\}$ called the \textit{triangles of $\Pi$}, and three pairwise disjoint paths $P_1,P_2,P_3$ called the \textit{paths of $\Pi$}, where $P_i$ has ends $a_i,b_i$
for each $i\in \{1,2,3\}$, and for distinct $i,j\in \{1,2,3\}$, $a_ia_j$ and $b_ib_j$ are the only edges between $P_i$ and $P_j$. For a graph $G$, by a \textit{prism in $G$} we mean an induced subgraph of $G$ which is a prism.

A {\em pyramid} is a graph $\Sigma$ consisting of a vertex $a$, a triangle $\{b_1, b_2, b_3\}$ and three paths $P_1,P_2,P_3$ of length at least one with $P_i$ from $a$ to $b_i$ for each $i\in \{1,2,3\}$ and otherwise pairwise disjoint, such that for distinct $i,j\in \{1,2,3\}$, $b_ib_j$ is the only edge between $P_i \setminus \{a\}$ 
and $P_j \setminus \{a\}$, and at most one of $P_1, P_2, P_3$ has
length exactly one. We say that $a$ is the {\em apex} of $\Sigma$, $b_1b_2b_3$ is the {\em base} of $\Sigma$, and $P_1,P_2,P_3$ are the \textit{paths} of $\Sigma$. The pyramid $\Sigma$ is said to be \textit{long} if all its paths have lengths more than one. For a graph $G$, by a \textit{pyramid in $G$} we mean an induced subgraph of $G$ which is a pyramid.

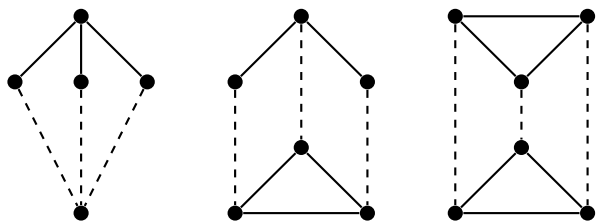
\begin{figure}[ht]
\label{fig:3PCs}
\begin{center}
\begin{tikzpicture}[scale=0.29]

\node[inner sep=2pt, fill=black, circle] at (0, 3)(v1){}; 
\node[inner sep=2pt, fill=black, circle] at (-3, 0)(v2){}; 
\node[inner sep=2pt, fill=black, circle] at (3, 0)(v3){}; 
\node[inner sep=2pt, fill=black, circle] at (0, 0)(v4){}; 
\node[inner sep=2pt, fill=black, circle] at (0, -6)(v5){};

\node[inner sep=2pt, fill=white, circle] at (0, -4.8)(v21){};

\draw[black, thick] (v1) -- (v2);
\draw[black, thick] (v1) -- (v3);
\draw[black, thick] (v1) -- (v4);
\draw[black, dashed, thick] (v2) -- (v5);
\draw[black, dashed, thick] (v3) -- (v5);
\draw[black, dashed, thick] (v4) -- (v5);

\end{tikzpicture}
\hspace{0.7cm}
\begin{tikzpicture}[scale=0.29]

\node[inner sep=2pt, fill=black, circle] at (0, 3)(v1){}; 
\node[inner sep=2pt, fill=black, circle] at (-3, 0)(v2){}; 
\node[inner sep=2pt, fill=black, circle] at (3, 0)(v3){}; 
\node[inner sep=2pt, fill=black, circle] at (-3, -6)(v4){}; 
\node[inner sep=2pt, fill=black, circle] at (0, -3)(v5){}; 
\node[inner sep=2pt, fill=black, circle] at (3, -6)(v6){};

\node[inner sep=2pt, fill=white, circle] at (0, -4.8)(v21){}; 

\draw[black, thick] (v1) -- (v2);
\draw[black, thick] (v1) -- (v3);
\draw[black, dashed, thick] (v1) -- (v5);
\draw[black, dashed, thick] (v2) -- (v4);
\draw[black, dashed, thick] (v3) -- (v6);
\draw[black, thick] (v4) -- (v5);
\draw[black, thick] (v4) -- (v6);
\draw[black, thick] (v5) -- (v6);

\end{tikzpicture}
\hspace{0.7cm}
\begin{tikzpicture}[scale=0.29]

\node[inner sep=2pt, fill=black, circle] at (-3, 3)(v1){}; 
\node[inner sep=2pt, fill=black, circle] at (0, 0)(v2){}; 
\node[inner sep=2pt, fill=black, circle] at (3, 3)(v3){}; 
\node[inner sep=2pt, fill=black, circle] at (-3, -6)(v4){}; 
\node[inner sep=2pt, fill=black, circle] at (0, -3)(v5){}; 
\node[inner sep=2pt, fill=black, circle] at (3, -6)(v6){};

\node[inner sep=2pt, fill=white, circle] at (0, -4.8)(v21){}; 

\draw[black, thick] (v1) -- (v2);
\draw[black, thick] (v1) -- (v3);
\draw[black, thick] (v2) -- (v3);
\draw[black, dashed, thick] (v1) -- (v4);
\draw[black, dashed, thick] (v2) -- (v5);
\draw[black, dashed, thick] (v3) -- (v6);
\draw[black, thick] (v4) -- (v5);
\draw[black, thick] (v4) -- (v6);
\draw[black, thick] (v5) -- (v6);

\end{tikzpicture}
\end{center}
\caption{Theta, pyramid and prism. The dashed lines represent paths of length at least one.}
\label{fig:forbidden_isgs}
\end{figure}

Let us now mention a few results from the literature which we will use in this paper.
Let $G$ be a graph. By a \textit{separation} in $G$ we mean a triple $(L,M,R)$ of pairwise disjoint subsets of vertices in $G$ with $L\cup M\cup R=G$, such that neither $L$ nor $R$ is empty and $L$ is anticomplete to  $R$ in $G$. Let $x,y\in G$ be distinct. We say a set $M\subseteq G\setminus \{x,y\}$ \textit{separates $x$ and $y$} if there exists a separation $(L,M,R)$ in $G$ with $x\in L$ and $y\in R$. Also, for disjoint sets $X,Y\subseteq G$, we say a set $M\subseteq G\setminus (X\cup Y)$ \textit{separates $X$ and $Y$} if there exists a separation $(L,M,R)$ in $G$ with $X\subseteq L$ and $Y\subseteq R$. If $X=\{x\}$, we say that \textit{$M$ separates $x$ and $Y$} to mean $M$ separates $X$ and $Y$. Recall the following well-known theorem of Menger \cite{Menger}:
\begin{theorem}[Menger \cite{Menger}]\label{Menger}
   Let $k\geq 1$ be an integer, let $G$ be a graph and let $x,y\in G$ be distinct and non-adjacent. Then either there exists a set $M\subseteq G\setminus \{x,y\}$ with $|M|<k$ such that $M$ separates $x$ and $y$, or there are $k$ pairwise internally disjoint paths in $G$ from $x$ to $y$.
\end{theorem}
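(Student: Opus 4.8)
The plan is to prove the classical \emph{$A$--$B$ form} of Menger's theorem and then deduce Theorem~\ref{Menger}. For a graph $H$ and $A,B\subseteq V(H)$, call a path an \emph{$A$--$B$ path} if its two ends lie in $A$ and in $B$ and no \emph{internal} vertex lies in $A\cup B$ (so a single vertex of $A\cap B$ is an $A$--$B$ path of length $0$; in this part we do not insist that paths be induced), and say that $S\subseteq V(H)$ \emph{separates $A$ from $B$} if $H\setminus S$ has no $A$--$B$ path (so $A\cap B\subseteq S$). The $A$--$B$ form asserts that for every $H$ and all $A,B$, the minimum size of a set separating $A$ from $B$ equals the maximum number of pairwise vertex-disjoint $A$--$B$ paths. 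To obtain Theorem~\ref{Menger}, apply this to $H:=G\setminus\{x,y\}$, $A:=N_G(x)$, $B:=N_G(y)$ (note $A,B\subseteq V(H)$ since $x\not\sim y$): $k$ pairwise disjoint $A$--$B$ paths in $H$ give $k$ pairwise internally disjoint paths from $x$ to $y$ in $G$ (prepend $x$, append $y$; a common neighbour $c$ of $x$ and $y$ contributes the path $x\dd c\dd y$), which, after replacing each by a shortest $x$--$y$ path inside its own vertex set, become induced and are still pairwise internally disjoint; and a set $M$ with $|M|<k$ separating $A$ from $B$ in $H$ also separates $x$ and $y$ in $G$, since an $x$--$y$ path in $G\setminus M$ would, upon deleting its ends, contain an $A$--$B$ path in $H\setminus M$. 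Conversely any set of size $<k$ separating $x$ and $y$ in $G$ separates $A$ from $B$ in $H$, so the two alternatives of Theorem~\ref{Menger} match the two sides of the min--max equality being $\ge k$ or not.

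For the $A$--$B$ form, ``$\max\le\min$'' is immediate, since every $A$--$B$ path meets every separating set and disjoint paths meet it in distinct vertices. For ``$\min\le\max$'' I would induct on $|E(H)|$. If $E(H)=\emptyset$ then both sides equal $|A\cap B|$. Otherwise let $k$ be the minimum size of a separating set and suppose for a contradiction that $H$ has no $k$ pairwise disjoint $A$--$B$ paths. Pick an edge $e=uv$ and contract it to a vertex $v_e$, obtaining $H/e$ with one fewer edge; lifting a disjoint system of $A$--$B$ paths in $H/e$ gives one in $H$, so $H/e$ also has no $k$ pairwise disjoint $A$--$B$ paths, and by induction $H/e$ has a set $Y$ separating $A$ from $B$ with $|Y|<k$. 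If $v_e\notin Y$, then $Y\subseteq V(H)$ separates $A$ from $B$ in $H$ (an $A$--$B$ path of $H\setminus Y$ projects to an $A$--$B$ walk of $(H/e)\setminus Y$), contradicting $|Y|<k$. Hence $v_e\in Y$, and $X:=(Y\setminus\{v_e\})\cup\{u,v\}$ separates $A$ from $B$ in $H$ (an $A$--$B$ path avoiding $X$ avoids $u$ and $v$, hence projects unchanged and avoids $Y$); since $k\le|X|\le|Y|+1\le k$, we get $|X|=k$, so $X$ is a minimum separating set containing both ends of $e$.

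Now split $H$ along $X$. Let $H_1$ have vertex set $X$ together with all vertices on some $A$--$X$ path of $H$, and edge set exactly the edges lying on some $A$--$X$ path; define $H_2$ symmetrically with $B$. An $A$--$X$ path uses no edge with both ends in $X$, so $e=uv$ lies in neither $H_1$ nor $H_2$, whence $|E(H_i)|<|E(H)|$. Any set separating $A$ from $X$ in $H_1$ separates $A$ from $B$ in $H$ (the initial segment of an $A$--$B$ path of $H$ up to its first vertex of $X$ is an $A$--$X$ path living inside $H_1$), so its minimum size is at least $k$, and it is at most $|X|=k$; by induction $H_1$ has $k$ pairwise disjoint $A$--$X$ paths, which, being disjoint, end at the $k$ distinct vertices of $X$, and likewise $H_2$ has $k$ pairwise disjoint $X$--$B$ paths starting at those vertices. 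Concatenating the two paths through each vertex of $X$ produces $k$ pairwise disjoint $A$--$B$ paths of $H$: their interiors lie in $V(H_1)\setminus X$ and in $V(H_2)\setminus X$, which are disjoint from $X$ and from each other (a common vertex would give an $A$--$B$ path of $H$ avoiding $X$). This contradiction completes the induction.

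I expect the main work to be bookkeeping rather than ideas. One must keep track of the convention that the \emph{ends}, but not the \emph{interior}, of an $A$--$B$ path may lie in $A\cup B$, so that the degenerate length-$0$ paths arising when $X$ meets $A$ or $B$ are handled correctly and the concatenated paths really are $A$--$B$ paths and really are pairwise disjoint; one must also transfer separating sets and path systems faithfully through the contraction $H/e$ when $u$ or $v$ lies in $A\cup B$; and, in the reduction to Theorem~\ref{Menger}, one must reconcile the (not necessarily induced) paths above with this paper's convention that a path in $G$ be induced, which is exactly why the ``shorten each path inside its vertex set'' step appears. An alternative, contraction-free route is the augmenting-path argument (equivalently max-flow--min-cut on an auxiliary digraph): start from a maximum disjoint family of $A$--$B$ paths, run a Hungarian-style alternating reachability search from $A$, and read off a separating set of equal size as the ``boundary'' of that search; this needs the same care with the definitions.
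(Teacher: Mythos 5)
The paper does not prove this statement: Theorem~\ref{Menger} is quoted from \cite{Menger} and used as a black box, so there is no proof of the authors' to compare yours against. Your argument is the standard one --- the $A$--$B$ (set) form of Menger's theorem by induction on the number of edges via contracting an edge (G\"oring's proof, as in Diestel), followed by the reduction to the two-vertex formulation --- and the reduction is done carefully, including the point, easy to overlook here, that this paper's ``paths'' are induced subgraphs; your ``shorten each path inside its own vertex set'' step handles that correctly, and the shortened paths stay pairwise internally disjoint because their interiors only shrink. The one place where a step is actually false as written, rather than merely in need of bookkeeping, is the claim that ``an $A$--$X$ path uses no edge with both ends in $X$, so $e=uv$ lies in neither $H_1$ nor $H_2$.'' Under your convention that only the \emph{internal} vertices of an $A$--$B$ path must avoid $A\cup B$, if (say) $u\in A$, then the single-edge path $u\dd v$ has ends $u\in A$ and $v\in X$ and no internal vertices, so it is an $A$--$X$ path containing $e$; then $e\in E(H_1)$ and the claimed strict decrease of the edge count is no longer justified. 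The same looseness threatens the disjointness of the concatenated paths, since an $A$--$X$ path may then meet $X$ at both of its ends. Both problems disappear with the standard tightening of the definition --- require an $A$--$X$ path to meet $A$ exactly in its first vertex and $X$ exactly in its last --- under which the initial segment of an $A$--$B$ path up to its first vertex of $X$ is still an $A$--$X$ path, no $A$--$X$ path can use an edge with both ends in $X$, and a common vertex of the two path systems outside $X$ genuinely yields an $A$--$B$ walk avoiding $X$. (Alternatively, run the splitting step in $H\setminus\{e\}$ rather than in $H_1,H_2$, which sidesteps the edge-count issue entirely.) With that adjustment your proof is complete and correct.
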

 Let $k$ be a positive integer and let $G$ be a graph. A \textit{strong $k$-block} in $G$ is a set $B$ of at least $k$ vertices in $G$ such that for every $2$-subset $\{x,y\}$ of $B$, there exists a collection $\mathcal{P}_{\{x,y\}}$ of at least $k$ distinct and pairwise internally disjoint paths in $G$ from $x$ to $y$, where for every two distinct $2$-subsets $\{x,y\}, \{x',y'\}\subseteq B$ of $G$, and every choice of paths $P\in \mathcal{P}_{\{x,y\}}$ and $P'\in \mathcal{P}_{\{x',y'\}}$, we have $P\cap P'=\{x,y\}\cap \{x',y'\}$.

For a tree $T$ and $xy\in E(T)$, we denote by $T_{x,y}$ the component of $T-xy$ containing $x$. Let $G$ be a graph and $(T,\chi)$ be a tree decomposition for $G$. For every $S\subseteq T$, let $\chi(S)=\bigcup_{x\in S}\chi(x)$. By an \textit{adhesion} of $(T,\chi)$ we mean the set $\chi(x)\cap \chi(y)=\chi(T_{x,y})\cap \chi(T_{y,x})$ for some $xy\in E(T)$. For every $x\in V(T)$, by the \textit{torso at $x$}, denoted by $\hat{\chi}(x)$, we mean the graph obtained from the bag $\chi(x)$ by, for each $y\in N_T(x)$, adding an edge between every two non-adjacent vertices $u,v\in \chi(x,y)$. In \cite{twvii}, we used Theorem~\ref{boundeddegree} and the following result from \cite{tighttw}:

\begin{theorem}[Erde and Wei\ss auer \cite{tighttw}, see also \cite{Grohe}]\label{tightdegorg}
Let $r$ be a positive integer, and let $G$ be a graph containing no subdivision of $K_r$ as a subgraph. Then $G$ admits a tree decomposition $(T,\chi)$ for which the following hold.
\begin{itemize}
\item Every adhesion of $(T,\chi)$ has cardinality less than $r^2$.
\item For every $x\in V(T)$, either $\hat{\chi}(x)$ has fewer than $r^2$ vertices of degree at least $2r^4$, or $\hat{\chi}(x)$ has no minor isomorphic to  $K_{2r^2}$.
\end{itemize}
\end{theorem}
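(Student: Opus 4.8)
The plan is to produce the tree decomposition by repeatedly splitting $G$ along separations of order less than $r^2$, and then to analyse the torsos that survive. First I would use (and, if needed, reprove by a standard recursion) the following: every graph $G$ admits a tree decomposition $(T,\chi)$ of adhesion less than $r^2$ that is \emph{linked} --- for every $xy\in E(T)$ there are $|\chi(x)\cap\chi(y)|$ pairwise disjoint paths on each side of $xy$ meeting the adhesion $\chi(x)\cap\chi(y)$ exactly in their endpoints --- and in which no torso $\hat\chi(x)$ has a separation $(L,M,R)$ with $|M|<r^2$ and $|L|,|R|\geq r^2$. Such a decomposition is obtained by recursively cutting along separations of order $<r^2$ both of whose sides are large, using submodularity of the order function to keep the pieces compatible (this is the mechanism behind canonical tree decompositions over low-order separations, and linkedness is arranged by the usual ``leanness'' bookkeeping). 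With this in hand the first bullet holds by construction, and it remains to establish the second bullet for a single fixed torso.

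Next I would observe that absence of a subdivision of $K_r$ descends to torsos. Indeed, if $\hat\chi(x)$ contains a subdivision of $K_r$, then each of its branch edges that is not an edge of $G$ lies in some adhesion $S$ of $(T,\chi)$; linkedness lets us replace that branch edge by a path through the corresponding side of $T$, and doing this for all such edges simultaneously is possible because the realizing path systems across distinct adhesions interact only inside the adhesions, each of bounded size. This produces a subdivision of $K_r$ in $G$, a contradiction; hence every torso contains no subdivision of $K_r$.

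The crux is then the following claim about a single torso $H$, which by the above is ``$r^2$-inseparable'' (no separation of order $<r^2$ with two large sides) and contains no subdivision of $K_r$: $H$ cannot simultaneously have $r^2$ vertices of degree at least $2r^4$ and a $K_{2r^2}$-minor. Suppose it does. The $K_{2r^2}$-minor yields $2r^2$ pairwise disjoint connected branch sets with an edge between each pair, and $r^2$-inseparability, via Menger's theorem (Theorem~\ref{Menger}), allows us to route roughly $r^2$ pairwise disjoint paths linking prescribed vertices through this minor while avoiding any given set of fewer than $r^2$ vertices. Taking $r$ of the high-degree vertices as branch vertices of a prospective subdivision of $K_r$, one builds the $\binom r2$ branch paths one at a time: at each step the degree bound $2r^4$ leaves each chosen branch vertex with at least $r-1$ unused private neighbours (all previously built paths together use only $O(r^4)$ vertices), and the $K_{2r^2}$-minor together with inseparability supplies a disjoint route between the relevant private neighbours. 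This gives a subdivision of $K_r$ in $H$, contradicting the previous paragraph. The three parameters are matched exactly for this to go through: $2r^2$ branch sets comfortably exceeds the $\binom r2$ routes plus the $<r^2$ separator vertices one must dodge, and $2r^4$ comfortably exceeds the $O(r^4)$ vertices consumed by competing paths.

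Assembling: every torso either has fewer than $r^2$ vertices of degree at least $2r^4$, or has no $K_{2r^2}$-minor, which is the second bullet, and together with the adhesion bound this proves the theorem. The main obstacle is the third step --- upgrading a large clique \emph{minor} to a clique \emph{topological minor} inside a well-linked host --- which needs a careful simultaneous path-routing argument (either a linkage theorem or an explicit Menger/augmenting-path scheme exploiting $r^2$-inseparability) and is precisely where the exponents $r^2$, $2r^4$, $2r^2$ enter. A secondary technical point is making the decomposition in the first step linked in a strong enough sense that $K_r$-subdivision-freeness provably transfers to \emph{all} torsos at once.
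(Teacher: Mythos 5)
First, note that the paper does not prove this statement at all: Theorem~\ref{tightdegorg} is quoted verbatim from Erde and Wei\ss auer \cite{tighttw} and used as a black box, so there is no in-paper proof to compare against. Your sketch does follow the broad architecture of the cited source (decompose along separations of order $<r^2$ into inseparable parts, then upgrade ``many high-degree vertices plus a large clique minor'' to a topological $K_r$ via Menger-type routing), and your third step is indeed where the exponents come from. But as a proof it has a genuine gap.

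The gap is your second step, the claim that exclusion of a $K_r$-subdivision ``descends to torsos.'' This is not justified by linkedness and is not true in the generality you need. To realize a subdivision of $K_r$ living in $\hat\chi(x)$ as a subdivision in $G$, you must simultaneously replace every virtual branch edge by a path through the far side of the relevant adhesion, and these paths must be internally disjoint. For a single adhesion $S$, the torso contains all $\binom{|S|}{2}$ edges on $S$, and a prospective subdivision may use several of them; realizing a given pairing on $S$ by disjoint paths through the far side is a linkage problem that is \emph{not} solved by having $|S|$ disjoint paths from $S$ into that side (e.g.\ if the far side is a tree attaching to $S=\{a,b,c\}$ only through a path, the virtual triangle on $S$ cannot be realized). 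Indeed, if torsos inherited $K_r$-subdivision-freeness one could simply recurse and prove something much stronger than the stated dichotomy; the whole point of the ``few high-degree vertices or no $K_{2r^2}$-minor'' conclusion is that they do not. The correct route (and the one in \cite{tighttw}) keeps the contradiction in $G$ itself: the torso \emph{is} a minor of $G$ for the decomposition into $(<r^2)$-inseparable parts, so a $K_{2r^2}$-minor of $\hat\chi(x)$ yields one in $G$ located at that part; a vertex of degree $\ge 2r^4$ \emph{in the torso} (note that torso-degree counts virtual edges, another point your sketch elides) yields a large fan of paths in $G$; and the inseparability of the part lets one route the $\binom{r}{2}$ branch paths of a $K_r$-subdivision in $G$ through the branch sets of that minor. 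Your step~3 is essentially this routing argument, so the fix is to drop the false heredity claim and run step~3 inside $G$ rather than inside the torso, after first establishing that torsos are minors of $G$ and translating torso-degrees into fans in $G$.
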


to prove the following.

\begin{theorem}[Abrishami, Alecu, Chudnovsky, Hajebi and Spirkl \cite{twvii}]\label{noblocksmalltw_wall}
  Let $k,t\geq 1$ be integers. Then there exists an integer $w=w(k,t)\geq 1$ such that every $t$-clean graph with no strong $k$-block has treewidth at most $w$.
\end{theorem}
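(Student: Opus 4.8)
The plan is to prove the contrapositive: there is $w=w(k,t)$ such that every $t$-clean graph $G$ with $\tw(G)>w$ contains a strong $k$-block. The two ingredients are Korhonen's theorem (Theorem~\ref{boundeddegree}) and the Erde--Wei\ss auer tree decomposition (Theorem~\ref{tightdegorg}), combined in the same spirit as the proof of Theorem~\ref{tw7} in \cite{twvii}.

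First I would pass to a tree decomposition with small adhesions. Since $G$ is $t$-clean it has no induced subdivision of $W_{t\times t}$, and because $W_{t\times t}$ is subcubic this is equivalent to $G$ having no $W_{t\times t}$-minor; hence there is $r=r(t)$ such that $G$ has no subdivision of $K_r$ (a subdivision of a large enough clique contains a subdivision of every fixed subcubic graph, in particular of $W_{t\times t}$). Applying Theorem~\ref{tightdegorg} with this $r$ gives a tree decomposition $(T,\chi)$ of $G$ in which every adhesion has fewer than $r^2$ vertices and every torso $\hat\chi(x)$ either has fewer than $r^2$ vertices of degree at least $2r^4$ or has no $K_{2r^2}$-minor. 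A standard amalgamation argument — each adhesion is a clique in the torsos on both sides of its edge of $T$, hence lies in a single bag of every tree decomposition of those torsos, so optimal tree decompositions of the torsos can be glued along $T$ — yields $\tw(G)\le\max_{x\in V(T)}\tw(\hat\chi(x))$. So it suffices to show that if a torso $R=\hat\chi(x)$ has treewidth exceeding a suitable bound depending on $k$ and $t$, then $G$ contains a strong $k$-block.

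Next I would extract a large wall subdivision from $R$. Note that $R$ is obtained from the induced subgraph $G[\chi(x)]$ by adding a clique on each adhesion $A_{xy}=\chi(x)\cap\chi(y)$ with $y\in N_T(x)$, and that the branches $B_y=\chi(T_{y,x})$ satisfy $B_y\cap\chi(x)=A_{xy}$, with the sets $B_y\setminus A_{xy}$ pairwise disjoint, pairwise anticomplete, and anticomplete to $\chi(x)\setminus A_{xy}$ in $G$. In the first case of Theorem~\ref{tightdegorg}, delete from $R$ the set $Z$ of its fewer than $r^2$ vertices of degree at least $2r^4$; then $R\setminus Z$ has maximum degree less than $2r^4$ and treewidth at least $\tw(R)-r^2$, so by Theorem~\ref{boundeddegree} it is not $t'$-clean for a suitable $t'=t'(k,t)$, and — having bounded degree it can contain neither $K_{t'}$ nor $K_{t',t'}$ — it contains a subdivision of $W_{s\times s}$ or the line graph of such a subdivision for $s$ as large as we please; in either case a large wall subdivision $W$ appears in $R$. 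In the second case, the Grid Theorem (Theorem~\ref{wallminor}) gives a large wall minor in $R$, which together with the absence of a $K_{2r^2}$-minor can (by the usual flat-wall analysis, after deleting boundedly many vertices) be promoted to a large wall subdivision $W$ in $R$.

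\textbf{The main obstacle} is the last step: converting the wall subdivision $W$, which lives in the torso $R$ and whose edges may include ``virtual'' edges that lie in the torso cliques but are not edges of $G$, into a genuine strong $k$-block of $G$. Here the leverage is that $W$ is triangle-free, so each adhesion $A_{xy}$ contains at most two vertices of $W$ and hence carries at most one virtual edge of $W$; distinct virtual edges of $W$ therefore lie in adhesions attached to distinct neighbours $y$ of $x$, and so can be handled inside the pairwise disjoint, pairwise anticomplete branches $B_y$ — rerouting each realizable virtual edge through its branch, discarding the bounded-width ``defects'' where this fails, and passing to a large sub-wall of what remains — leaving a large (not necessarily induced) wall subdivision of $G$. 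Choosing $s$ large relative to $k$, one then selects $k$ widely separated branch vertices of this wall and routes, for each of the $\binom{k}{2}$ pairs, $k$ internally disjoint paths that are mutually disjoint across pairs except at shared endpoints; this is a strong $k$-block in $G$, contradicting the hypothesis, and keeping track of the constants produces $w(k,t)$. I expect the delicate points to be making the flat-wall step of the second case quantitative enough, and, above all, controlling the virtual edges so that a wall of the required size survives the rerouting.
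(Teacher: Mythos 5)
Your overall strategy---prove the contrapositive and combine Erde--Wei\ss{}auer's decomposition (Theorem~\ref{tightdegorg}) with Korhonen's theorem (Theorem~\ref{boundeddegree})---is indeed the combination the paper says underlies the proof in \cite{twvii}, but two of your steps would fail as written. First, the assertion that $t$-cleanness forbids a $K_r$-subdivision for some $r=r(t)$ is not justified by the reasoning you give. The subcubic-target equivalence you invoke is between \emph{minors} and \emph{topological minors} (that is, subdivisions present as \emph{subgraphs}), not between minors and \emph{induced} subdivisions: a large clique has $W_{t\times t}$ as a minor yet contains no induced subdivision of $W_{t\times t}$. Passing from $t$-cleanness to the hypothesis of Theorem~\ref{tightdegorg} genuinely requires a lemma here, and that is exactly where the $K_t$- and $K_{t,t}$-freeness and the Lozin--Razgon style pruning of a topological clique minor into an induced basic obstruction have to enter; it is not a one-line equivalence.

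Second, and fatally, the last step cannot produce a strong $k$-block from a wall subdivision. Every vertex $v$ of a strong $k$-block must, by Menger applied to the $k$ pairwise internally disjoint paths from $v$ to any other block vertex, have degree at least $k$; a subdivided wall has maximum degree three, so it contains no strong $k$-block for $k\geq 4$, no matter how large the wall is or how widely separated the chosen branch vertices are. Routing paths inside the wall between $k$ of its branch vertices simply cannot give $k$ internally disjoint paths per pair. The strong $k$-block the theorem produces must therefore be found in the part of the graph your argument discards---the bounded set $Z$ of high-degree torso vertices, the adhesions, and the branches hanging off them---not inside the wall you extract after deleting those vertices. Locating the block there is the real content of the proof, and your proposal never addresses it.
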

Note that for every $t \geq 3$, every subdivision of $W_{t \times t}$ contains a theta and 
the line graph of every subdivision of $W_{t \times t}$ contains a prism. It follows that for every $t\geq 1$, every graph in $\mathcal{C}_t$ is $t$-clean, and so the following is
immediate from Theorem~\ref{noblocksmalltw_wall}:

\begin{corollary}\label{noblocksmalltw_Ct}
For all integers $k,t\geq 1$, there exists an integer $\beta=\beta(k,t)$ such that every graph in $\mathcal{C}_t$ with no strong $k$-block has treewidth at most
  $\beta(k,t)$.
  \end{corollary}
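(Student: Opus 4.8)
The plan is to derive Corollary~\ref{noblocksmalltw_Ct} from Theorem~\ref{noblocksmalltw_wall} by first establishing that $\mathcal{C}_t$ is contained in the class of $t$-clean graphs; once that is done one simply sets $\beta(k,t):=w(k,t)$, the constant supplied by Theorem~\ref{noblocksmalltw_wall}.

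So fix $k,t\ge 1$ and let $G\in\mathcal{C}_t$; that is, $G$ is (theta, prism)-free and has no clique of cardinality $t$. I claim $G$ is $t$-clean, i.e.\ it contains none of the four $t$-basic obstructions $K_t$, $K_{t,t}$, a subdivision of $W_{t\times t}$, and the line graph of a subdivision of $W_{t\times t}$. In any case $G$ has no $K_t$ by hypothesis. If $t\le 2$, then $G$ is $K_2$-free (having no clique of size $t\le 2$), while each of $K_{t,t}$, a subdivision of $W_{t\times t}$, and the line graph of such a subdivision contains an edge and hence a $K_2$; so $G$ contains none of them, and the claim holds. Thus we may assume $t\ge 3$, and I would then invoke the three elementary containments recorded in the paragraph preceding the corollary: $K_{t,t}$ contains $K_{2,3}$ as an induced subgraph, and $K_{2,3}$ is a theta (take as ends the two vertices on the size-two side, with the three length-two paths running through the size-three side, whose interiors are single, pairwise non-adjacent vertices); every subdivision of $W_{t\times t}$ contains a theta; and the line graph of every subdivision of $W_{t\times t}$ contains a prism. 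Since $G$ is (theta, prism)-free, it contains neither $K_{t,t}$, nor a subdivision of $W_{t\times t}$, nor the line graph of such a subdivision, so $G$ is $t$-clean. With the inclusion $\mathcal{C}_t\subseteq\{t\text{-clean graphs}\}$ in hand, Theorem~\ref{noblocksmalltw_wall} applies directly: it yields $w(k,t)$ such that every $t$-clean graph with no strong $k$-block has treewidth at most $w(k,t)$, and in particular every $G\in\mathcal{C}_t$ with no strong $k$-block has treewidth at most $w(k,t)$; so $\beta(k,t):=w(k,t)$ works.

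I do not expect any genuine obstacle here: the corollary is essentially a matter of unravelling definitions, with all of the substance residing in the black box Theorem~\ref{noblocksmalltw_wall} (itself built on Theorems~\ref{boundeddegree} and~\ref{tightdegorg}). The only mildly non-trivial step is exhibiting a theta inside a subdivided $(t\times t)$-wall and a prism inside its line graph for $t\ge 3$ — for instance, two degree-three branch vertices of the wall joined by three internally disjoint subdivided paths with pairwise anticomplete interiors give the theta, and the edge-triangles at those two branch vertices together with the three corresponding paths in the line graph give the prism — but in the write-up it suffices to quote these as facts already recorded in the text.
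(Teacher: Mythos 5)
Your proof is correct and follows the same route as the paper: show that every graph in $\mathcal{C}_t$ is $t$-clean (using the containment of thetas and prisms in the basic obstructions, plus the trivial small-$t$ cases) and then apply Theorem~\ref{noblocksmalltw_wall}. You are somewhat more explicit than the paper's brief remark preceding the corollary, in particular in handling $K_{t,t}$ via the theta $K_{2,3}$ and in treating $t\le 2$ separately, but the substance is identical.
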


A vertex $v$ in a graph $G$ is said to be a \textit{branch vertex} if $v$ has degree more than two. By a {\em caterpillar} we mean a tree $C$ with maximum degree three such that there is a path $P$ in $C$ containing all branch vertices of $C$ (our definition of a caterpillar is non-standard for two reasons: a caterpillar is often allowed to be of arbitrary maximum degree, and the path $P$ from the definition often contains all vertices of degree more than one). By a \textit{subdivided star} we mean a graph isomorphic to a subdivision of the complete bipartite graph $K_{1,\delta}$ for some $\delta\geq 3$. In other words, a subdivided star is a tree with exactly one branch vertex, which we call its \textit{root}. For every graph $H$, a vertex $v$ of $H$ is said to be \textit{simplicial} if $N_H(v)$ is a clique. We denote by $\mathcal{Z}(H)$ the set of all simplicial vertices of $H$. Note that for every tree $T$, $\mathcal{Z}(T)$ is the set of all leaves of $T$. An edge $e$ of a tree $T$ is said to be a \textit{leaf-edge} of $T$ if $e$ is incident with a leaf of $T$. It follows that if $H$ is the line graph of a tree $T$, then $\mathcal{Z}(H)$ is the set of all vertices in $H$ corresponding to the leaf-edges of $T$. The following is proved in \cite{twvii} based on (and refining) a result from \cite{Davies}. 
\begin{theorem}[Abrishami, Alecu, Chudnovsky, Hajebi and Spirkl \cite{twvii}]\label{connectifier}
  For every integer $h\geq 1$, there exists an integer $\mu=\mu(h)\geq 1$ with the following property. Let $G$ be a connected graph with no clique of cardinality $h$ and let
  $S \subseteq G$ such that $|S|\geq \mu$. Then either some path in $G$ contains $h$ vertices from $S$, or 
  there is an induced subgraph $H$ of $G$ with $|H\cap S|=h$ for which one of the following holds.
  \begin{itemize}
  \item $H$ is either a caterpillar or the line graph of a caterpillar with $H\cap S=\mathcal{Z}(H)$.
  \item $H$ is a subdivided star with root $r$ such that $\mathcal{Z}(H)\subseteq H\cap S\subseteq \mathcal{Z}(H)\cup \{r\}$.
\end{itemize}
  \end{theorem}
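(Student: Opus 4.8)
The plan is to reduce quickly to a situation where $S$ contains many ``spread out'' vertices, and then extract $H$ by a sequence of Ramsey-type arguments; the crux is a careful analysis deciding which of the three allowed shapes occurs. Throughout, I would assume no path in $G$ contains $h$ vertices of $S$, and aim to produce $H$. Fix a vertex $r\in V(G)$, take a breadth-first-search tree $T$ of $G$ rooted at $r$, and let $T'$ be the minimal subtree of $T$ containing $S\cup\{r\}$, rooted at $r$. The point of using a BFS tree is that for $u\in V(T')$ and any descendant $v$ of $u$ in $T'$, the path of $T'$ from $u$ to $v$ is a geodesic of $G$ (the path of $T$ from $r$ to $v$ is a geodesic through $u$), hence an induced path of $G$; call such paths of $T'$ \emph{descending}. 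Since $T'$ is minimal, every leaf of $T'$ lies in $S\cup\{r\}$, so after adding $r$ to $S$ and enlarging $\mu$ we may assume every leaf of $T'$ lies in $S$; moreover, a descending path may always be truncated at its first vertex of $S$, which later lets us confine $S$ to the ``extremal'' vertices of the structures we build.

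If $T'$ has fewer than $N=N(h)$ leaves, then it has fewer than $N$ branch vertices, so it is covered by fewer than $2N$ maximal paths all of whose interior vertices have degree $2$ in $T'$; each such path is descending, hence induced in $G$, and since they cover $S$, one of them contains at least $|S|/2N\ge h$ vertices of $S$ once $\mu$ is large --- the required path, contradicting our assumption. Hence $T'$ has a set $L\subseteq S$ of at least $N$ leaves, and it remains to clean up $L$ by iterated Ramsey, with $N$ chosen large enough to absorb the losses at each step.

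Fix a threshold $D=D(h)$. First suppose some $v\in V(T')$ has at least $D$ children with a descendant in $L$; take one leaf below each such child and the corresponding descending (hence induced) legs $R_i$ out of $v$, with first vertices $c_i$. If $h$ of the $c_i$ were pairwise adjacent we would get a clique of size $h$, so Ramsey gives a large subfamily with the $c_i$ pairwise non-adjacent, and one more Ramsey step, colouring a pair $\{i,j\}$ by whether $R_i\setminus\{v\}$ and $R_j\setminus\{v\}$ are anticomplete, either yields the desired \emph{subdivided star} with root $v$, or a family of legs pairwise joined by an edge; since the legs are geodesics out of $v$, any such edge joins points almost equidistant from $v$, and one last Ramsey step on these edges exposes a clique of size $h$, a contradiction. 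Otherwise no vertex of $T'$ has $D$ children with a descendant in $L$, so, $|L|$ being huge, $T'$ has a descending path $\pi$ (the \emph{spine}) with many vertices having an off-$\pi$ descendant in $L$; taking one such leaf below a suitably sparse subset of these branches (and truncating as needed) produces a long induced spine and many descending induced legs with pairwise far-apart attachment points. A Ramsey step on the legs, according to whether a leg's first vertex is adjacent to the next spine vertex, makes them uniform --- either every leg attaches at a single spine vertex, or every leg attaches via a triangle at two consecutive spine vertices --- and a second Ramsey step on pairs of legs, according to whether the two legs are anticomplete, either completes a \emph{caterpillar} (legs anticomplete, single attachments) or the \emph{line graph of a caterpillar} (triangle attachments), or else leaves pairwise-interacting legs that the equidistance estimate again turns into a clique of size $h$. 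In each surviving case we keep $h$ legs, truncate each at its first vertex of $S$, and arrange the ends of the spine to carry legs, so that $H\cap S=\mathcal Z(H)$ (respectively the subdivided-star condition) holds.

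I expect the main obstacle to be precisely this last clean-up: the legs must be chosen --- as geodesics, attached far enough apart along a geodesic spine relative to their own lengths --- so that the \emph{only} benign interactions of two legs are ``nothing'' and ``one triangle near the spine'', while every other interaction pattern must be traced back to a clique of size $h$. The equidistance estimate --- that an edge between two geodesics emanating from a common vertex joins points at almost the same distance from that vertex --- is what makes this possible, and it is where the argument refines the corresponding lemma of Davies \cite{Davies}. All of this has to be carried out while simultaneously confining $S$ to the extremal vertices and keeping the whole configuration induced; the resulting bound $\mu(h)$ is of tower type, which is harmless for the applications.
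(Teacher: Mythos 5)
Your framework (BFS tree, minimal Steiner subtree $T'$, descending geodesics, iterated Ramsey) is a reasonable skeleton, similar in spirit to what the Davies-style argument does. But there is a genuine gap at the step you yourself flag as the crux: the claim that pairwise-interacting geodesic legs out of a common vertex $v$, with pairwise non-adjacent first vertices and edges at almost-equal levels, ``expose a clique of size $h$'' after one more Ramsey step.

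This claim is false. Take $V(G)=\{v,c_1,\ldots,c_m,d_1,\ldots,d_m\}$ with edges $vc_i$ and $c_id_i$ for all $i$, and $c_id_j$ for all $i<j$, and no other edges. This $G$ is triangle-free: $N(v)=\{c_1,\ldots,c_m\}$ is stable, $N(c_i)=\{v\}\cup\{d_j:j\geq i\}$ is stable, and $N(d_i)=\{c_j:j\leq i\}$ is stable. Root a BFS tree at $v$, assign $d_i$ the parent $c_i$, and take $S=\{d_1,\ldots,d_m\}$. Then $T'$ has the descending legs $R_i=v\hbox{-}c_i\hbox{-}d_i$, which are geodesics, the first vertices $c_i$ are pairwise non-adjacent, and for every $i<j$ the legs $R_i\setminus\{v\}$ and $R_j\setminus\{v\}$ share the edge $c_id_j$ (levels $1$ and $2$, so equidistant in your sense). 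So your Ramsey step lands squarely in the ``pairwise interacting'' colour class, yet there is no $K_3$, let alone $K_h$ for $h\geq 3$. The intended contradiction does not materialise, and the argument halts.

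Note that the theorem is still true on this example: $\{c_1\}\cup\{d_j:j\geq 2\}$ induces a star containing $m-1$ vertices of $S$ as its leaves, so the caterpillar/subdivided-star outcome is available --- but it has to be \emph{extracted from the interaction pattern itself}, not obtained by eliminating that pattern via a clique. The same issue recurs in your caterpillar case when two legs attached at nearby spine vertices interact. In other words, the ``equidistance'' constraint does limit interactions to be local, but local interactions between geodesics emanating from a common vertex can perfectly well be triangle-free; they must be converted into one of the three target structures, not ruled out. This is the part of the proof that is missing, and it is not a routine Ramsey step.
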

\section{Jumps and jewels on pyramids with trapped apices}
\label{sec:pyramids}
For a graph $G$, an induced subgraph $H$ of $G$ and a vertex $a\in H$, we say $a$ is \textit{trapped} in $H$ if
\begin{itemize}
    \item we have $N^2_G[a]\subseteq H$; and
    \item every vertex in $N_H(a)=N_G(a)$ has degree two in $H$ (and so in $G$).
\end{itemize}
The goal of this section is, for a graph $G\in \mathcal{C}$, $H\subseteq G$ and a pyramid $\Sigma$ in $H$, to investigate the adjacency between $\Sigma$ and a path in $G \setminus H$, assuming that the apex of $\Sigma$ is trapped in $H$. This will be of essential use in the next section. 

We begin with a few definitions. Let $G$ be a graph and let $\Sigma$ be a pyramid in $G$  with apex $a$, base $b_1b_2b_3$ and
paths $P_1,P_2,P_3$. A set $X \subseteq \Sigma$ is said to be {\em local (in $\Sigma$)} if either $X \subseteq P_i$ for some $i \in \{1,2,3\}$ or $X \subseteq \{b_1,b_2,b_3\}$.   Let $P$ be a path in $G\setminus \Sigma$ with (not necessarily distinct) ends $p_1,p_2$. For $i\in \{1,2,3\}$, we say $P$ is a {\em corner path for $\Sigma$ at $b_i$} if
\begin{itemize}
    \item $p_1$ has at least one neighbor in $P_i\setminus \{b_i\}$;
    \item $p_2$ is complete to $\{b_1,b_2,b_3\}\setminus \{b_i\}$; and 
    \item except for the edges between $\{p_1,p_2\}$ and $\Sigma$ described in the above two bullets, there is no edge with an end in $P$ and an end in $\Sigma\setminus \{b_i\}$.
\end{itemize}
By a {\em corner path for $\Sigma$} we mean a corner path for $\Sigma$ at one of $b_1$, $b_2$ or $b_3$.

Let $p\in G\setminus \Sigma$. Then $p$ is said to be \textit{narrow for $\Sigma$} if $N_{\Sigma}(p)$ is local in $\Sigma$. Otherwise, we say $p$ is \textit{wide for $\Sigma$}. For $i\in \{1,2,3\}$, we say $p$ is a \textit{jewel for $\Sigma$ at $b_i$} if $p$ is anticomplete to $P_i$ (in particular, $p$ is anticomplete to $a$), and for every $j\in \{1,2,3\}\setminus \{i\}$, we have $N_{P_j}(p)=N_{P_j}[b_j]$. By a {\em jewel for $\Sigma$} we mean a jewel for $\Sigma$ at one of $b_1$, $b_2$ or $b_3$.
Note that if $p$ is either a corner path or a jewel for $\Sigma$, then $p$ is wide for $\Sigma$. The following lemma establishes a converse to this fact for graphs in $\mathcal{C}$ and pyramids with a trapped apex.
\begin{lemma}
  \label{major}
  Let $G\in \mathcal{C}$ be a graph, let $H\subseteq G$, and let $a\in V(H)$ be trapped in $H$. Let $\Sigma$ be a pyramid in $H$ with apex $a$, base $b_1b_2b_3$, and paths $P_1,P_2,P_3$. Let $p\in G\setminus H$. Then $p$ is wide for $\Sigma$ if and only if $p$ is either a corner path for $\Sigma$ or a jewel for $\Sigma$.
  \end{lemma}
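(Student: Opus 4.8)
The ``if'' direction was already observed just before the statement, so I would only prove: if $p$ is wide for $\Sigma$, then $p$ is a corner path or a jewel for $\Sigma$. Throughout, write $P_i=a\dd a_i\dd\cdots\dd b_i'\dd b_i$ (so $a_i$ is the neighbor of $a$ on $P_i$ and $b_i'$ the neighbor of $b_i$ on $P_i$).

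\textbf{Step 1: normalization.} Since $a$ is trapped, every neighbor of $a$ has degree two; if some $P_i$ had length one then $b_i\in N_G(a)$ would see $a,b_j,b_k$ and have degree at least three, a contradiction. Hence $\Sigma$ is long, and $N_G(a)=\{a_1,a_2,a_3\}$ with each $a_i$ of degree two. Also $p\in G\setminus H$ and $N^2_G[a]\subseteq H$, so $p$ is at distance at least three from $a$; in particular $p$ is anticomplete to $N_G[a]=\{a,a_1,a_2,a_3\}$, so $N_\Sigma(p)\subseteq (P_1\cup P_2\cup P_3)\setminus\{a,a_1,a_2,a_3\}$. Call $i$ a \emph{reach of $p$} if $p$ has a neighbor strictly between $a_i$ and $b_i$ on $P_i$. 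Since $N_\Sigma(p)$ is not local it is not contained in $\{b_1,b_2,b_3\}$, which together with the previous sentence forces $p$ to have at least one reach; and it meets at least two of $P_1\setminus\{a\},P_2\setminus\{a\},P_3\setminus\{a\}$.

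\textbf{Step 2: $p$ has exactly one or two reaches.} For a reach $i$, let $u_i$ be the neighbor of $p$ on $P_i$ closest to $a$; then $a\dd a_i\dd\cdots\dd u_i\dd p$ is an induced path with interior inside $P_i\setminus\{a,b_i\}$. As $b_i$ is the \emph{only} vertex of $P_i$ with a neighbor on $P_j\setminus\{b_j\}$ (namely $b_j$), the interiors of these paths for distinct reaches are pairwise anticomplete; hence if $p$ had three reaches the three paths would form a theta with ends $a$ and $p$ — impossible. So there are one or two reaches.

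\textbf{Step 3: the two-reach case gives a jewel.} Say the reaches are $P_1,P_2$. Then $p$ has no neighbor strictly between $a_3$ and $b_3$ and $p\not\sim a_3$, so $N_{P_3}(p)\subseteq\{b_3\}$. If $p\sim b_3$, then $p\dd b_3\dd b_3'\dd\cdots\dd a_3\dd a$ together with $p\dd u_1\dd\cdots\dd a$ and $p\dd u_2\dd\cdots\dd a$ is a theta with ends $a,p$ (the new interior contains $b_3$, whose neighbors $b_1,b_2$ lie outside the other two interiors), a contradiction; so $p$ is anticomplete to $P_3$. Now using the hole $C_{13}=P_1\cup P_3\cup\{b_1b_3\}$, to which $p$ attaches only inside $P_1\setminus\{a,a_1\}$, and the fact that $p$ is anticomplete to $P_3$, one shows (a theta with both ends on $P_1$ at the extreme neighbors of $p$, the third side going down $P_1$, round through $a$, and back along $P_3,b_1b_3$ up $P_1$) that $N_{P_1}(p)$ is a subpath of $P_1$ terminating at $b_1$; and then, using the hole $C_{12}$ together with the reach on $P_2$ and prism-freeness (the base triangle $b_1b_2b_3$ and a triangle through $p$ near $b_1$ would give a prism if $N_{P_1}(p)$ extended past $b_1'$), that in fact $N_{P_1}(p)=\{b_1,b_1'\}$. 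Symmetrically $N_{P_2}(p)=\{b_2,b_2'\}$, so $p$ is a jewel at $b_3$.

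\textbf{Step 4: the one-reach case gives a corner path.} Say the reach is $P_1$. Then $N_{P_2}(p)\subseteq\{b_2\}$ and $N_{P_3}(p)\subseteq\{b_3\}$, and since $p$ is wide it is adjacent to $b_2$ or $b_3$. It therefore suffices to prove that $p$ is adjacent to \emph{both} $b_2$ and $b_3$: then $N_\Sigma(p)\setminus\{b_1\}\subseteq(P_1\setminus\{b_1\})\cup\{b_2,b_3\}$, $p$ is complete to $\{b_2,b_3\}$, and $p$ has a neighbor in $P_1\setminus\{b_1\}$, i.e. $p$ is a corner path at $b_1$. So assume for contradiction $p\sim b_2$ and $p\not\sim b_3$ (the other case being symmetric). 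Here $p$ is anticomplete to $P_3$, and the plan is to produce a theta with ends $a$ and $p$: the first side $p\dd u_1\dd\cdots\dd a$, the second side $p\dd b_2\dd b_3\dd b_3'\dd\cdots\dd a_3\dd a$ (valid since $p$ is anticomplete to $P_3$ and $N_{P_2}(p)=\{b_2\}$), and a third side leaving $p$ along $P_1$ towards $b_1$, across $b_1b_?$, and down to $a$ — choosing the direction so as to stay internally disjoint from the first two. The one genuinely awkward configuration is when $p$ is adjacent to $b_1$ itself (and possibly to $b_1'$): there the three-path routing must be replaced by extracting a prism from $\{b_1,b_2,b_3\}$ and a triangle through $p$, or by a separate theta argument. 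This is the step I expect to be the main obstacle; everything else is forced.

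\textbf{Conclusion.} Steps 3 and 4 exhaust the possibilities from Step 2, so $p$ is a corner path or a jewel for $\Sigma$, which is what we wanted. The hard part, as noted, is the bookkeeping in Step 4 for vertices that attach directly to the base triangle — this is where prism-freeness, rather than theta-freeness, does the work.
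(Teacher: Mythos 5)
Your Steps 1--2 are correct and match the start of the paper's proof, but your overall architecture differs and Step~4 has a real gap that you yourself flag. The paper does not split into ``one reach'' and ``two reaches'': it fixes the hypothesis ``$p$ is wide \emph{and} not a corner path'' and derives ``$p$ is a jewel'' directly. Under that hypothesis it first shows some $P_i$ (say $P_3$) is anticomplete to $p$, then shows $p$ in fact has a neighbour in $P_j^*\setminus N_{P_j}(a)$ for \emph{both} $j\in\{1,2\}$, then that $N_{P_j}(p)$ is an edge, and finally that that edge contains $b_j$. Your ``one-reach'' situation is exactly what the paper rules out in the second of these steps, so it is never addressed as a case in which $p$ must be shown to be a corner path.

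Concretely, your Step~4 does not go through as written. You are trying to build a theta with ends $a$ and $p$, but the path from $p$ through $b_2$ and the path from $p$ through $b_1$ (or through $b_1'\dd b_1$) both want to continue through the base triangle, and $b_2\sim b_1, b_3$, so their interiors collide; you correctly note that the case $p\sim b_1$ is the sticking point, but in fact even $p\not\sim b_1$ is awkward because $b_2\in$ one interior is adjacent to $b_1,b_3\in$ the other. The paper's fix is to change the ends of the theta: take ends $a$ and $b_2$ with paths $a\dd P_1\dd x_1\dd p\dd b_2$ (where $x_1$ is the $P_1$-neighbour of $p$ \emph{closest to} $a$, so this path sees none of $p$'s other $P_1$-neighbours), $a\dd P_2\dd b_2$, and $a\dd P_3\dd b_3\dd b_2$. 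This immediately contradicts theta-freeness regardless of whether $p\sim b_1$, and no prism is needed here.

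Your Step~3 is also underspecified. You assert ``a theta with both ends on $P_1$ at the extreme neighbours of $p$, the third side \ldots\ back along $P_3, b_1b_3$ up $P_1$,'' but that third side only avoids overlapping with the $P_1$-side of the theta if the extreme neighbour nearer $b_1$ is already $b_1$, which is what you are trying to prove. The paper handles this with two separate thetas both having $a$ as an end: if the extreme neighbours $x_1,y_1$ coincide, a theta with ends $a,x_1$; if they are distinct and non-adjacent, a theta with ends $a,p$. This proves $N_{P_1}(p)$ is an edge $\{x_1,y_1\}$. Only then does prism-freeness enter: if $y_1\in P_1^*$, one gets a prism with triangles $x_1y_1p$ and $b_1b_2b_3$, so $y_1=b_1$. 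Your instinct about where prism-freeness does the work is right, but the intermediate theta argument needs to be tightened to the paper's two-case form before the prism can be invoked cleanly.
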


\begin{proof}
We only need to prove the ``only if'' direction. Assume that $p\in G\setminus H$ is wide for $\Sigma$ and $p$ is not a corner path for $\Sigma$. Since $a$ is trapped in $H$ and $p\in G\setminus H$, it follows that $\Sigma$ is long and $p$ is anticomplete to $N_{\Sigma}[a]$. First, we show that:

\sta{\label{atleastoneanti} There exists $i\in \{1,2,3\}$ for which $p$ is anticomplete to $P_i$.}

Suppose for a contradiction that $p$ has a neighbor in each of $P_1,P_2,P_3$. Since $p$ is wide for $\Sigma$ and $p$ is not a corner path for $\Sigma$, we may assume without loss of generality that $p$ has a neighbor in $P_1^*$ and a neighbor in $P_2^*$. For each $i\in \{1,2,3\}$, traversing $P_i$ from $a$ to $b_i$, let $x_i$ be the first neighbor of $p$ in $P_i$. Since $a$ is trapped, it follows that $x_1\in P_1^*$, $x_2\in P_2^*$ and $x_3\in P_3\setminus N_{\Sigma}[a]$. But then $G$ contains a theta with ends $a, p$ and paths $a\dd P_i\dd x_i\dd p$ for $i\in \{1,2,3\}$, a contradiction. This proves \eqref{atleastoneanti}.\medskip
  
By \eqref{atleastoneanti} and without loss of generality, we may assume that $p$ is anticomplete to $P_3$. Note that since $p$ is wide for $\Sigma$, it follows that for every $j\in \{1,2\}$, $p$ has a neighbor in $P_j$, and there exists $j\in \{1,2\}$ for which $p$ has a neighbor in $P_j^*$. For each $j\in \{1,2\}$, traversing $P_j$ from $a$ to $b_j$, let $x_j$ and $y_j$ be the first and the last neighbor of $p$ in $P_j$, respectively. Then we have $x_j\in P_j^*\setminus N_{P_j}(a)$ for some $j\in \{1,2\}$.  In fact, the following holds.

\sta{\label{notbi} For every $j\in \{1,2\}$, we have $x_j\in P_j^*\setminus N_{P_j}(a)$.}
 
Suppose not. Since $p$ is wide for $\Sigma$ and $p$ is anticomplete to $N_{\Sigma}(a)$, we may assume without loss of generality that $p$ has a neighbor in $P_1^*$ and $x_2=y_2=b_2$. But now $G$ contains a theta with ends $a,b_2$ and paths  $a \dd P_1\dd x_1\dd p\dd b_2$, $a \dd P_2 \dd b_2$ and $a\dd P_3\dd b_3\dd b_2$, a contradiction. This proves \eqref{notbi}.

\sta{\label{clique2} For every $j\in \{1,2\}$, $N_{P_j}(p)$ is a clique of cardinal ity two.}

Suppose not. Then we may assume without loss of generality that either $x_1=y_1$ or $x_1$ and $y_1$ are distinct and non-adjacent. By \eqref{notbi}, for every $j\in \{1,2\}$, we have $x_j\in P_j^*\setminus N_{P_j}(a)$. Therefore, if $x_1=y_1$, then $G$ contains a theta with ends $a,x_1$ and paths $a \dd P_1\dd x_1$, $a \dd P_2\dd x_2\dd p\dd x_1$ and $a\dd P_3\dd b_3\dd b_1\dd P_1\dd x_1$, which is impossible. Thus, $x_1$ and $y_1$ are distinct and non-adjacent. But now $G$ contains a theta with ends $a,p$ and paths $a \dd P_1\dd x_1\dd p$, $a \dd P_2\dd x_2\dd p$ and $a\dd P_3\dd b_3\dd b_1\dd P_1\dd y_1\dd p$, a contradiction. This proves \eqref{clique2}.\medskip

The proof is almost concluded. By \eqref{clique2}, for every $j\in \{1,2\}$, we have $N_{P_j}(p)=\{x_j,y_j\}$ and $x_j$ is adjacent to $y_j$. If $y_j\in P_j^*$ for some $j\in \{1,2\}$, then $G$ contains a prism with triangles $x_jy_jp$ and $b_1b_2b_3$ and paths $x_j\dd P_j\dd a\dd P_3\dd b_3$, $y_j\dd P_j\dd b_j$ and $p\dd y_{3-j}\dd P_{3-j}\dd b_{3-j}$, a contradiction. Hence, we have $y_j=b_j$ for every $j\in \{1,2\}$, and so $p$ is a jewel corner for $\Sigma$ at $b_i$. This completes the proof of Lemma~\ref{major}.
 \end{proof}

We can now prove the main result of this section.
\begin{theorem}\label{pyramid2}
  Let $G\in \mathcal{C}$ be a graph, let $H\subseteq G$, and let $a\in V(H)$ be trapped in $H$. Let $\Sigma$ be a pyramid in $H$ with apex $a$, base $b_1b_2b_3$, and paths $P_1,P_2,P_3$. Let $P$ be a path in $G \setminus H$. Then one of the following holds. 
\begin{itemize}
\item $N_{\Sigma}(P)$ is local in $\Sigma$.
\item $P$ contains a corner path for $\Sigma$.
\item $P$ contains a jewel for $\Sigma$.
\end{itemize}
\end{theorem}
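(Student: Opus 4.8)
The plan is to dispose of the vertices of $P$ that are wide for $\Sigma$ using Lemma~\ref{major}, and then to show that a path consisting only of narrow vertices has a local $\Sigma$-neighborhood by extracting a short ``incompatible'' configuration and manufacturing a theta or a prism from it.

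Two facts will be used throughout. Since $P\subseteq G\setminus H$ and $a$ is trapped in $H$, we have $P\cap N^2_G[a]=\emptyset$, so $P$ is anticomplete to $N_G[a]\supseteq N_\Sigma[a]$; and if some $P_i$ had length one then $b_i\in N_\Sigma(a)$ would have degree at least three in $H$, contradicting that $a$ is trapped, so $\Sigma$ is long. First I would note: if some vertex $p$ of $P$ is wide for $\Sigma$, then by Lemma~\ref{major} $p$ is itself a corner path or a jewel for $\Sigma$, and since $\{p\}$ is an induced path in $G\setminus\Sigma$ (with coinciding ends) this already finishes the proof. So from now on assume every vertex of $P$ is narrow for $\Sigma$, and, for a contradiction, that $N_\Sigma(P)$ is not local.

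Next I would choose a subpath $Q=q_1\dd\cdots\dd q_m$ of $P$ with $N_\Sigma(Q)$ not local and $m$ minimum. Minimality yields that $q_1,q_m$ have neighbors in $\Sigma$, that $N_\Sigma(Q')$ is local for every proper subpath $Q'$ of $Q$, and (since a single narrow vertex has a local $\Sigma$-neighborhood) that $m\ge 2$. Because any subset of $\{b_1,b_2,b_3\}$ is local, the failure of locality is witnessed by a pair $\{u,v\}\subseteq N_\Sigma(Q)$ with $u\in P_i^{*}$ for some $i$; testing the two ``halves'' of $Q$ against $u$ and against $v$ then shows, after possibly re-choosing $v$, that we may take $u\in N_\Sigma(q_1)$, $v\in N_\Sigma(q_m)$, that $N_\Sigma(q_1)\subseteq P_i$, that $v\in P_j$ for some $j\ne i$, and that every internal vertex of $Q$ has its $\Sigma$-neighborhood inside $P_i$ — hence (using $P_i\cap P_j=\{a\}$ and that $a$ has no neighbor in $P$) empty when $v\in P_j^{*}$, and inside $\{b_i\}$ when $v=b_j$. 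This splits the argument into a ``path'' case, $v\in P_j^{*}$, and a ``base'' case, $v=b_j$ with $N_\Sigma(q_m)\subseteq\{b_1,b_2,b_3\}$.

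In each case I plan to produce a theta or a prism (impossible in $\mathcal{C}$), or, in the base case, to recognise $Q$ itself as a corner path (contradicting the standing assumption). In the path case, a theta with ends $a$ and $q_1$ forces $N_{P_i}(q_1)$, and symmetrically $N_{P_j}(q_m)$, to be a clique — a single vertex or a single edge of the path; if such an edge lies inside $P_i^{*}$ (resp.\ $P_j^{*}$) then together with $q_1$ (resp.\ $q_m$) it is a triangle which, paired with the base triangle of $\Sigma$ and joined by the three paths ``$P_i$ through $a$ into $P_k$'', ``$P_i$ to $b_i$'' and ``$Q$ to $b_j$'', forms a prism. So each of $q_1,q_m$ attaches either by a single vertex of $P_i^{*}$ (resp.\ $P_j^{*}$) or by the edge joining $b_i$ (resp.\ $b_j$) to its neighbour on $P_i$ (resp.\ $P_j$). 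If neither attachment is of the second (``tip'') kind, a theta with ends $a$ and $u$ works, with legs $P_i$ from $a$ to $u$; $P_k$ from $a$ to $b_k$ followed by $b_kb_i$ and $P_i$ back to $u$; and $P_j$ from $a$ to $v$ followed by $Q$ to $u$. If exactly one attachment (say at $q_1$) is a tip attachment, the analogous theta with ends $a$ and $v$ works. If both are tip attachments, the two triangles at $q_1$ and $q_m$, joined by ``$P_i$ through $a$ into $P_j$'', the edge $b_ib_j$, and $Q$, form a prism. In the base case, $N_{P_i}(q_1)$ is again a clique and internal vertices of $Q$ attach (if at all) only to $b_i$; if $q_m$ is complete to $\{b_j,b_k\}$ then $Q$ is a corner path for $\Sigma$ at $b_i$, and otherwise $q_m$ is non-adjacent to $b_k$ and a theta with ends $a$ and $b_j$ — legs $P_j$; $P_k$ followed by $b_kb_j$; and $P_i$ to $u$ followed by $Q$ — closes the argument. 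Having exhausted the cases, $N_\Sigma(P)$ is local.

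The hardest part will be the case analysis in the last step. The main obstacles are: ruling out an attachment vertex with two pairwise non-adjacent neighbours on its path (where I would use a theta with ends $a$ and that vertex); coping with the ``tip'' attachment consisting of $b_i$ together with its $P_i$-neighbour, for which the obvious theta fails because its third leg is forced through $b_i$, a neighbour of $q_1$; and, in every configuration, choosing the theta-ends so that all three legs have pairwise anticomplete interiors and length at least two. This is essentially the same bookkeeping as in the proof of Lemma~\ref{major}, but spread along the path $Q$ rather than concentrated at a single vertex.
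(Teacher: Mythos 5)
Your proposal is correct and follows essentially the same strategy as the paper: take a minimal subpath $Q$ of $P$ with $N_{\Sigma}(Q)$ not local, observe (via Lemma~\ref{major}) that each vertex is narrow so $|Q|\geq 2$, locate the ends' attachments on two distinct paths $P_i,P_j$ with all interior attachments confined to $\{b_i\}$, and then close each configuration with a theta or a prism. The paper's endgame is slightly leaner — after the ``base'' possibility ($x_2=y_2=b_2$) is dispatched by one theta, it shows in one step that each end attaches by a clique of size exactly two and then uses a single prism with triangles $p_1x_1y_1$, $p_2x_2y_2$, whereas you split the adjacent-pair case further into ``interior edge'' versus ``tip edge'' versus ``single vertex'' and use separate theta/prism arguments for each; your version also asserts a clique condition on $N_{P_i}(q_1)$ in the base case that is neither obviously provable there (the obvious theta can be blocked when $q_m$ is adjacent to $b_i$) nor actually used by your subsequent corner-path/theta arguments. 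These are only organizational differences and the argument is sound.
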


\begin{proof}
 Suppose for a contradiction that there exists a path $P$ in $G\setminus H$ for which none of the outcomes of Theorem~\ref{pyramid2} hold. We choose such a path $P$ with $|P|$ minimum. It follows that $N_{\Sigma}(P)$ is not local in $\Sigma$, $N_{\Sigma}(X)$ is local in $\Sigma$ for every connected set $X\subsetneq P$, $P$ contains no corner path for $\Sigma$ and $P$ contains no jewel for $\Sigma$. Therefore, by Lemma~\ref{major}, we have $|P|>1$. Since $a$ is trapped in $H$ and $P\subseteq G\setminus H$, it follows that $\Sigma$ is long and $P$ is anticomplete to $N_{\Sigma}[a]$. For every $i\in \{1,2,3\}$, let $P_i'=P_i\setminus N_{P_i}[a]$. Since $N_{\Sigma}(P)$ is not local and $P$ is minimal subject to this property, we may assume without loss of generality that
  \begin{itemize}
      \item 
$N_{\Sigma}(p_1)  \subseteq P'_1$ and $p_1$ has a neighbor in $P'_1 \setminus \{b_1\}$; and 
    \item $p_2$ has a neighbor in $P_2'$, and either $N_{\Sigma}(p_2) \subseteq P_2'$ or $N_{\Sigma}(p_2) \subseteq \{b_1,b_2,b_3\}$.
  \end{itemize}
  It follows from the choice of $P$ that $P^*$ is anticomplete to $\Sigma\setminus \{b_1\}$. For each $i\in \{1,2\}$, traversing $P_i$ from $a$ to $b_i$, let $x_i$ and $y_i$ be the first and the last neighbor of $p_i$ in $P_i$, respectively. So we have $x_1\in P_1'\setminus \{b_1\}$, $y_1\in P_1'$ and $x_2,y_2\in P_2'$. In fact, the following holds.
 
\sta{\label{onlyb2} We have $x_2\in P_2'\setminus \{b_2\}$.}
  
Suppose not. Then we have $x_2=y_2=b_2$, and so $b_2\in N_{\Sigma}(p_2)\subseteq \{b_1,b_2,b_3\}$. Since $P$ is not a corner path for $\Sigma$ at $b_1$, it follows that $p_2$ is not adjacent to $b_3$. But now $G$ contains a theta with ends $a,b_2$ and paths $a\dd P_1\dd x_1\dd p_1\dd P\dd p_2\dd b_2$, $a\dd P_2\dd b_2$ and $a\dd P_3\dd b_3\dd b_2$, a contradiction. This proves \eqref{onlyb2}.

\medskip

  In view of \eqref{onlyb2} and the choice of $P$, we conclude that $P^*$ is anticomplete to $\Sigma$, and for every $i\in \{1,2\}$, we have $N_{\Sigma}(p_i)=N_{P_i'}(p_i)$, $x_i\in P_i'\setminus \{b_i\}$ and $y_i\in P_i'$.

\sta{\label{antitoP3} For every $i\in \{1,2\}$, $x_i$ and $y_i$ are distinct and adjacent.}

Suppose not. Then we may assume without loss of generality that either $x_1=y_1$ or $x_1$ and $y_1$ are distinct and non-adjacent. In the former case, $G$ contains a theta with ends $a,x_1$ and paths $a \dd P_1\dd x_1$, $a \dd P_2\dd x_2\dd p_2\dd P\dd p_1\dd x_1$ and $a\dd P_3\dd b_3\dd b_1\dd P_1\dd x_1$, a contradiction. It follows that $x_1$ and $y_1$ are distinct and non-adjacent. But then $G$ contains a theta with ends $a,p_1$ and paths $a \dd P_1\dd x_1\dd p_1$, $a \dd P_2\dd x_2\dd p_2\dd P\dd p_1$ and $a\dd P_3\dd b_3\dd b_1\dd P_1\dd y_1\dd p_1$, again a contradiction. This proves \eqref{antitoP3}.\medskip

By \eqref{antitoP3}, for every $i\in \{1,2\}$, we have $N_{P_i}(p)=\{x_i,y_i\}$ and $x_i$ is adjacent to $y_i$. But now $G$ contains a $G$ contains a prism with triangles $p_1x_1y_1$ and $p_2x_2y_2$ and paths $P$, $x_1\dd P_1\dd a\dd P_2\dd x_2$ and $y_1\dd P_1\dd b_1\dd b_2\dd P_2\dd y_2$, a contradiction. This completes the proof of Theorem~\ref{pyramid2}.
\end{proof}

\section{Strip structures with an ornament of jewels} \label{sec:strips}

The main result of this section, Theorem~\ref{stripstructure}, provides a description of the structure of graphs in $\mathcal{C}$ which contain a pyramid with a trapped apex. 

We first set up a framework that allows us to think of a pyramid with apex $a$ as a special case of a construction similar to the line graph of a tree $T$, which we call a
``$(T,a)$-strip-structure.'' We start with an induced subgraph $W$ of $G$ that admits an ``optimal'' $(T,a)$-strip-structure in $G$ in a certain sense, and show that the rest of the graph fits into the same construction, except for vertices which are jewels for certain canonically positioned pyramids in $W$.

First, we need to properly define a strip-structure (this is similar to \cite{SPGT},
 \cite{bisimp2}, and \cite{threeinatree}).
A tree $T$ is said to be {\em smooth} if $T$ has at least three vertices and every vertex of $T$ is either a branch vertex or a leaf.
Let $G$ be a graph, let $a\in G$, let $T$ be a smooth tree, and
let $\eta: V(T) \cup E(T) \cup (E(T) \times V(T))\rightarrow 2^{G \setminus \{a\}}$ be a function. For every $S\subseteq V(T)$, we define $\eta(S)=\bigcup_{v \in S, e \in E(T[S])} (\eta(v) \cup \eta(e))$ and $\eta^+(S)=\eta(S)\cup \{a\}$.
For every vertex $v\in V(T)$, we define $B_\eta(v)$ to be the union of all sets $\eta(e,v)$ taken over all edges $e\in E(T)$ incident with $v$ (we often omit the subscript $\eta$ unless there is ambiguity).

The function $\eta$ is said to be a \textit{$(T,a)$-strip-structure in $G$} if the following conditions are satisfied.
\begin{enumerate}[(S1)]
\item \label{s1} For all distinct $o,o' \in V(T)\cup E(T)$, we have $\eta(o) \cap \eta(o')= \emptyset$.
  \item \label{s2}  If $l\in V(T)$ is a leaf of $T$, then $\eta(l)$ is empty.
  \item \label{s3}  For all $e \in E(T)$ and $v \in V(T)$, we have $\eta(e,v) \subseteq \eta(e)$, and $\eta(e,v)\neq \emptyset$ if and only if $e$ is incident with $v$.
\item \label{s4} For all distinct edges $e,f \in E(T)$ and every vertex $v \in V(T)$, $\eta(e,v)$ is complete to $\eta(f,v)$, and there are no other edges between $\eta(e)$ and $\eta(f)$. In particular, if $e$ and $f$ share no end, the $\eta(e)$ is anticomplete to $\eta(f)$.
 \item \label{s5}  For every $e\in E(T)$ with ends $u,v$, define $\eta^{\circ}(e)=\eta(e)\setminus (\eta(e,u)\cup \eta(e,v))$. Then for every vertex $x \in \eta(e)$, there is a path in $\eta(e)$ from $x$ to a vertex in $\eta(e,u)$ with interior contained in $\eta^{\circ}(e)$, and there is a path in $\eta(e)$ from $x$ to a vertex in $\eta(e,v)$ with interior contained in $\eta^{\circ}(e)$. 
\item \label{s6} For all $v \in V(T)$ and $e \in E(T)$, $\eta(v)$ is anticomplete to
  $\eta(e) \setminus \eta(e,v)$. In other words, we have $N_{\eta(T)}(\eta(v))\subseteq B_{\eta}(v)$.
    \item \label{s7} For every $v\in V(T)$ and every connected component $D$ of $\eta(v)$, we have $N_{B_{\eta}(v)}(D)\neq \emptyset$.
\item \label{s8} For every leaf $l\in V(T)$ of $T$, assuming $e\in E(T)$ to be the leaf-edge of $T$
  incident with $l$, $a$ is complete to $\eta(e,l)$. Also, $a$ has no other neighbors in $\eta(T)$.
\end{enumerate}

Let $S\subseteq \eta(T)$. We say that $S$
  is {\em local in $\eta$} if $ S\subseteq \eta(e)$ for some
  $e \in E(T)$ or $S \subseteq B_\eta(v) \cup \eta(v)$ for some $v \in V(T)$. The following lemma shows that every non-local subset contains a $2$-subset (that is, a subset of cardinality two) which is non-local.
  
\begin{lemma}\label{twoattachments}
 Let $G$ be a graph and $a \in V(G)$. Let $T$ be a smooth tree and $\eta$ be a $(T,a)$-strip-structure in $G$. Assume that $C \subseteq \eta(T)$ is not local in $\eta$. Then there is a $2$-subset of $C$ which is not local in $\eta$.
\end{lemma}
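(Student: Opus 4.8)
The plan is to argue by contradiction: suppose every $2$-subset of $C$ is local in $\eta$, and derive that $C$ itself is local. First I would set up the combinatorial framework. For each pair $\{x,y\}\subseteq C$, by hypothesis there is a ``witness'' for locality, i.e.\ either an edge $e(x,y)\in E(T)$ with $\{x,y\}\subseteq \eta(e(x,y))$, or a vertex $v(x,y)\in V(T)$ with $\{x,y\}\subseteq B_\eta(v(x,y))\cup\eta(v(x,y))$. The goal is to show all these witnesses can be taken to ``agree,'' i.e.\ there is a single $o\in V(T)\cup E(T)$ that works for all of $C$. The natural first reduction is to handle the degenerate cases: if $|C|\le 2$ the statement is immediate, and if $C$ meets $\eta(e)$ for at most one edge $e$ we are quickly done using (S3)--(S4) (every vertex of $\eta(T)$ lies in some $\eta(e)$, since by smoothness and (S2) the sets $\eta(v)$ are attached only via $B_\eta(v)\subseteq\bigcup_e\eta(e)$; and a vertex in $\eta(v)\setminus B_\eta(v)$ lies in no $\eta(e)$, so that case needs separate care via (S6)).

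Next I would organize the argument around the tree structure. For each $x\in C$, record the set $E_x=\{e\in E(T): x\in\eta(e)\}$; by (S1) and (S3), $E_x$ is either a single edge, or $x\in\eta^\circ(e)$ for a unique $e$ (so $|E_x|=1$ again), or $x\in\eta(v)\setminus B_\eta(v)$ for some $v$ and $E_x=\emptyset$, or $x\in B_\eta(v)$ and $E_x$ is a set of edges all incident with $v$. The key structural observation I would extract is: if $x,y\in C$ have witness edge $e$, then $x,y\in\eta(e)$; and if they have witness vertex $v$, then each of $x,y$ is in $\eta(v)$ or in $B_\eta(v)=\bigcup_{e\ni v}\eta(e,v)$, hence each lies either in $\eta(v)$ or in some edge-set $\eta(e)$ with $e\ni v$. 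So in all cases, for every pair there is a vertex $v$ of $T$ such that both $x$ and $y$ are ``seen at $v$,'' meaning each lies in $\eta(v)\cup B_\eta(v)$. I would then try to show these $v$'s can be chosen consistently: define, for each $x\in C$, the set $U_x\subseteq V(T)$ of vertices $v$ such that $x\in\eta(v)\cup B_\eta(v)$. Using (S3) (an edge's $\eta(e,v)$ is nonempty iff $v$ is an end of $e$) and the fact that $T$ is a tree, $U_x$ is a subtree of $T$ (it is a vertex, or the two ends of an edge, or a single vertex, or a ``star'' around one vertex) — crucially of diameter at most... actually of diameter at most $1$ except when $x\in\eta^\circ(e)$, where $U_x=\emptyset$; I would handle that exceptional family of $x$'s (those in some $\eta^\circ(e)$) at the start, noting any two such in the same $C$ force, via their pairwise witnesses and (S4)/(S5), that they share the edge $e$, and any such $x$ paired with a $y\notin\eta^\circ(e)$ forces $y\in\eta(e)$ too.

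Having reduced to the case where every $x\in C$ has $U_x\ne\emptyset$ a subtree, the pairwise-locality hypothesis says $U_x\cap U_y\ne\emptyset$ for all pairs (the witness vertex lies in both). By the Helly property of subtrees of a tree, $\bigcap_{x\in C}U_x\ne\emptyset$; pick $v$ in this intersection. Then every $x\in C$ lies in $\eta(v)\cup B_\eta(v)\subseteq\eta(v)\cup B_\eta(v)$, which is exactly the definition of $C$ being local in $\eta$ (the ``$S\subseteq B_\eta(v)\cup\eta(v)$'' alternative), and we are done. The main obstacle I anticipate is the bookkeeping around the ``edge'' witnesses versus ``vertex'' witnesses and the interior vertices $\eta^\circ(e)$: an edge witness $e$ for $\{x,y\}$ does not directly give a vertex in $U_x\cap U_y$ when, say, $x\in\eta^\circ(e)$, so these mixed cases must be cleared away first (showing that the presence of one interior-edge vertex in $C$ forces the whole of $C$ into that single $\eta(e)$, making $C$ local immediately). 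Once that special situation is excluded, the Helly-property argument on the subtrees $U_x$ is clean. I would need to be careful that $U_x$ really is a subtree: if $x\in B_\eta(v)$ for a single $v$ then $U_x\supseteq\{v\}$ and also contains the other end of each edge $e\ni v$ with $x\in\eta(e,v)$? No — $x\in\eta(e,v)\subseteq\eta(e)$ and $x\notin\eta(e,u)$ for the other end $u$ (by (S1)), so $U_x=\{v\}$ unless $x\in\eta(e,v)\cap\eta(f,w)$ for edges sharing no structure, which (S1) forbids; thus in fact $U_x$ is always a single vertex or empty, and the Helly step degenerates to ``all the single vertices coincide,'' which the pairwise nonempty intersection forces directly. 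This makes the final step essentially trivial, confirming that the real content is the case analysis eliminating interior vertices.
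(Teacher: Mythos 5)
There is a genuine gap in the Helly step, and a secondary misreading of (S1) that masks it. You define $U_x\subseteq V(T)$ as the set of vertices $v$ with $x\in\eta(v)\cup B_\eta(v)$, and you rely on the implication ``$\{x,y\}$ local $\Rightarrow U_x\cap U_y\neq\emptyset$'' so that the Helly property of subtrees gives $\bigcap_{x\in C}U_x\neq\emptyset$. That implication is false when the only witness for $\{x,y\}$ is an edge. Concretely, take $e=uv\in E(T)$, $x\in\eta(e,u)\setminus\eta(e,v)$ and $y\in\eta(e,v)\setminus\eta(e,u)$ (for instance, the two ends of a long $\eta(e)$-rung). Then $\{x,y\}\subseteq\eta(e)$ so the pair is local, yet $U_x=\{u\}$ and $U_y=\{v\}$: by (S1) $x$ lies in no $\eta(w)$ and in no $\eta(f)$ with $f\neq e$, so $x\in B_\eta(w)$ forces $w$ to be an end of $e$ with $x\in\eta(e,w)$, i.e.\ $w=u$, and symmetrically for $y$. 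Hence $U_x\cap U_y=\emptyset$, the pairwise-intersection hypothesis needed for Helly fails, and the argument does not close. This configuration is not removed by your preliminary reduction, since neither $x$ nor $y$ is in $\eta^\circ(e)$.

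Your final paragraph tries to argue this case away by claiming that $x\in\eta(e,v)$ implies $x\notin\eta(e,u)$ ``by (S1),'' but (S1) only asserts disjointness of $\eta$ across distinct elements of $V(T)\cup E(T)$; it says nothing about $\eta(e,u)$ versus $\eta(e,v)$, which are allowed to overlap (this is exactly what an $\eta(e)$-rung of length zero is). So $U_x$ need not be a singleton, and more importantly even a singleton $U_x$ does not suffice, as the example above shows. The fix is to run the Helly argument in the $1$-subdivision $T'$ of $T$, i.e.\ define $U_x$ as the set of all $o\in V(T)\cup E(T)$ through which $\{x\}$ is local: after disposing of the $\eta^\circ(e)$ case, each $U_x$ is one of $\{v\}$, $\{e,v\}$ or $\{e,u,v\}$, a subtree of $T'$; an edge-witness now does land in $U_x\cap U_y$, and Helly applies. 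The paper sidesteps Helly entirely with a short direct case analysis: first dispose of $C\cap\eta^\circ(e)\neq\emptyset$; then fix $x\in C$ with $x\in\eta(e,v)\cup\eta(v)$ and observe that either some $y\in C$ lies outside $\eta(e)\cup B_\eta(v)\cup\eta(v)$, giving the non-local pair $\{x,y\}$, or $C\subseteq\eta(e)\cup B_\eta(v)\cup\eta(v)$ and non-locality of $C$ supplies $x'\in\eta(e)\setminus(B_\eta(v)\cup\eta(v))$ and $y'\in(B_\eta(v)\cup\eta(v))\setminus\eta(e)$ forming the required pair.
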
 
  \begin{proof}
      
  First, suppose there exists a vertex 
  $x \in C \cap \eta^{\circ}(e)$
  for some $e\in E(T)$.
  Since $C$ is not local, there exists
  $y\in C \setminus \eta(e)$. Now $\{x,y\}$ is a $2$-subset of $C$ which is not local in $\eta$, as desired. Therefore, we may assume that $C\subseteq \bigcup_{v\in V(T)} (B(v)\cup \eta(v))$. Since the empty set is local in $\eta$, we have $C\neq \emptyset$; thus, we may pick $x\in C$, $v\in V(T)$ and $e\in E(T)$ such that $x\in \eta(e,v)\cup \eta(v)$. If there exists a vertex 
  $y\in C \setminus (\eta(e) \cup B(v)\cup \eta(v))$, then $\{x,y\}$ is a $2$-subset of $C$ which is not local in $\eta$, and so we are done. Consequently, we may assume that $C \subseteq \eta(e) \cup B(v)\cup \eta(v)$. Since $C$ is not local, there exist $x' \in \eta(e) \setminus (B(v)\cup \eta(v)))$
  and $y' \in (B(v)\cup \eta(v)) \setminus \eta(e)$ such that $\{x',y'\}\subseteq C$. Now $\{x',y'\}$ is a $2$-subset of $C$ which is not local in $\eta$, as required. This completes the proof of Lemma~\ref{twoattachments}.
   \end{proof}

In order to state and prove the main result of this section, we need to define several notions related to strip-structures. From here until the statement of Theorem~\ref{stripstructure}, let us fix a graph $G$, a vertex $a\in G$, a smooth tree $T$, and a $(T,a)$-strip-structure $\eta$ in $G$.

For every edge $e\in E(T)$ with ends $u,v$, by an {\em $\eta(e)$-rung}, we mean a path $P$ in $\eta(e)\subseteq \eta(T)$ for which either $|P|=1$ and $P\subseteq \eta(e,u) \cap \eta(e,v)$, or $P$ has an end in $\eta(e,u) \setminus \eta(e,v)$, an end in $\eta(e,v) \setminus \eta(e,u)$, and $P^*\subseteq \eta^{\circ}(e)$. Equivalently, a path $P$ in  $\eta(e)$ is an $\eta(e)$-rung if $P$ has an end in $\eta(e,u)$, an end in $\eta(e,v)$, and $|P \cap \eta(e, u)| = |P \cap \eta(e, v)| = 1$. 
It follows from (S\ref{s5}) that every vertex in $\eta(e)\setminus \eta^{\circ}(e)$ is contained in an $\eta(e)$-rung. In particular, if either $\eta(e,u)\subseteq  \eta(e,v)$ or $\eta(e,v)\subseteq \eta(e,u)$, then $\eta(e,u)=\eta(e,v)$. An $\eta(e)$-rung is said to be \textit{long} if it is of non-zero length.

For every edge $e \in E(T)$, let $\tilde{\eta}(e)$ be the set of vertices in
$\eta(e)$ that are not in any $\eta(e)$-rung (so $\tilde{\eta}(e)\subseteq \eta^{\circ}(e)$.) We say that $\eta$ is {\em tame}
if
\begin{itemize}
\item $\eta(v) = \emptyset$ for every $v \in V(T)$; and
\item $\tilde{\eta}(e)=\emptyset$ for every $e \in E(T)$.
\end {itemize}
In other words, $\eta$ is tame if and only if every vertex in
$\eta(T)$ is in an $\eta(e)$-rung for some $e\in E(T)$.

For a $(T,a)$-strip-structure $\eta'$ in $G$, we write $\eta\leq \eta'$ to mean that for every $o\in V(T)\cup E(T)\cup (E(T)\times V(T))$, we have $\eta(o)\subseteq \eta'(o)$.  We say that a $(T,a)$-strip-structure $\eta$ is \textit{substantial} if for every $e\in E(T)$, there exists a long $\eta(e)$-rung in $G$. Equivalently, $\eta$ is substantial if for every edge $e\in E(T)$ with ends $u,v$, we have $\eta(e,u)\neq \eta(e,v)$, and so $\eta(e,u)\setminus \eta(e,v),\eta(e,v)\setminus \eta(e,u)\neq \emptyset$. One may observe that since $T$ has at least three vertices, if $\eta$ is substantial and $\eta\leq \eta'$, then $\eta'$ is substantial too.

We say $\eta$ is \textit{rich} if
\begin{itemize}
    \item $a$ is trapped in $\eta^+(T)$; and
    \item for every leaf $l\in V(T)$ of $T$, assuming $e\in E(T)$ to be the leaf-edge of $T$ incident with $l$, we have $|\eta(e,l)|=1$.
\end{itemize}
It follows that if there exists a rich $(T,a)$-strip-structure $\eta$ in $G$, then $T$ has exactly $|N_G(a)|$ leaves, and for every leaf $l\in V(T)$ of $T$, assuming $e\in E(T)$ to be the leaf-edge of $T$ incident with $l$ and $v\in V(T)$ to be the unique neighbor of $l$ in $T$, we have $\eta(e,v)\cap \eta(e,l)=\emptyset$.

By a \textit{seagull in $T$} we mean a triple $(v,e_1,e_2)$ where $v\in V(T)$ and $e_1,e_2$ are two distinct edges of $T$ incident with $v$. By a \textit{claw in $T$} we mean a $4$-tuple $(v,e_1,e_2,e_3)$ where $v\in V(T)$ and $e_1,e_2,e_3$ are three distinct edges of $T$ incident with $v$.
        
        Let $(v,e_1,e_2,e_3)$ be a claw in $T$. By an \textit{$\eta$-pyramid at $(v,e_1,e_2,e_3)$}, we mean a pyramid $\Sigma$ with apex $a$, base $b_1b_2b_3$ and paths $P_1,P_2,P_3$, satisfying the following for each $i\in \{1,2,3\}$.
        \begin{itemize}
            \item  $b_i\in \eta(e_i,v)$.
            \item There exists a leaf $l_i$ of $T$ with the following properties:
              \begin{enumerate}
              \item $l_i$  belongs to the component of $T\setminus \{e_i\}$ not containing $v$.
              \item Let $\Lambda_i$ be the unique path in $T$ from $v$ to $l_i$ (so $e_i\in E(\Lambda_i)$). Then  $P_i=\Gamma_i\cup \{a\}$, where $\Gamma_i$ is a path in $\bigcup_{e\in E(\Lambda_i)}\eta(e)$ such that $R_i=\Gamma_i\cap \eta(e_i)$ is a long $\eta(e_i)$-rung and $\Gamma_i\cap \eta(e)$ is a $\eta(e)$-rung for each $e\in E(\Lambda_i)\setminus \{e_i\}$.
                \end{enumerate}
        \end{itemize}
        In particular, assuming $u_i$ to be the ends of $e_i$ distinct from $v$ and $c_i$ to be the unique vertex in $N_{R_i}(b_i)=N_{P_i}(b_i)$ for each $i\in \{1,2,3\}$, we have $b_i\in \eta(e_i,v)\setminus \eta(e_i,u_i)$ and $c_i\in \eta(e_i)\setminus \eta(e_i,v)$.
        
For a branch vertex $v\in V(T)$, by an \textit{$\eta$-pyramid at $v$} we mean an $\eta$-pyramid at $(v,e_1,e_2,e_3)$ for some claw $(v,e_1,e_2,e_3)$ in $T$. Also, by an \textit{$\eta$-pyramid} we mean an $\eta$-pyramid at $v$ for some branch vertex $v\in V(T)$. It follows that every $\eta$-pyramid is a long pyramid (recall that a pyramid is long if all its paths have lengths more than one). Also, if $\eta$ is substantial, then for every claw $(v,e_1,e_2,e_3)$ in $T$ there is a $\eta$-pyramid at $(v,e_1,e_2,e_3)$. 

Let $(v,e_1,e_2)$ be a seagull in $T$. A vertex $p\in G\setminus \eta^+(T)$ is said to be a \textit{jewel for $\eta$ at $(v,e_1,e_2)$} if for some edge $e_3\in E(T)\setminus \{e_1,e_2\}$ incident with $v$, there exists an $\eta$-pyramid $\Sigma$ at $(v,e_1,e_2,e_3)$ with base $b_1b_2b_3$ where $b_i\in \eta(e_i,v)$ for each $i\in \{1,2,3\}$, such that $p$ is a jewel for $\Sigma$ at $b_3$.  In particular, for each $i\in \{1,2\}$, $p$ is adjacent to $b_i$ and the unique vertex $c_i$ in $N_{P_i}(b_i)$. Therefore, since $\Sigma$ is an $\eta$-pyramid at $(v,e_1,e_2,e_3)$, assuming $u_i$ to be the end of $e_i$ distinct from $v$, it follows that $p$ has a neighbor $b_i\in \eta(e_i,v)\setminus \eta(e_i,u_i)$ and a neighbor $c_i\in \eta(e_i)\setminus \eta(e_i,v)$.

For a vertex $v\in V(T)$, by a \textit{jewel for $\eta$ at $v$} we mean a jewel for $\eta$ at $(v,e_1,e_2)$ for some seagull $(v,e_1,e_2)$ in $T$. Also, by a  \textit{jewel for $\eta$} we mean a jewel for $\eta$ at $v$ for some branch vertex $v\in V(T)$. We denote by $\mathcal{J}_{\eta}$ the set of all jewels for $\eta$. It follows that $\mathcal{J}_{\eta}\subseteq G\setminus \eta^+(T)$.

We are now in a position to prove the main result of this section:

\begin{theorem}\label{stripstructure}
Let $G \in \mathcal{C}$, let $a \in V(G)$ and let $T$ be a smooth tree. Suppose that there exists a tame, substantial, and rich $(T, a)$-strip-structure in $G$. Then there is a substantial and rich $(T,a)$-strip-structure $\zeta$ in $G$ for which $G\setminus (\zeta^+(T)\cup \mathcal{J}_{\zeta})$ is anticomplete to $\zeta^+(T)$.
  \end{theorem}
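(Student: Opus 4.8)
The plan is to start with a tame, substantial, rich $(T,a)$-strip-structure $\eta_0$ and enlarge it greedily until no further enlargement is possible, then show the resulting maximal structure $\zeta$ does the job. Concretely, among all substantial and rich $(T,a)$-strip-structures $\eta$ with $\eta_0 \le \eta$, I would choose $\zeta$ with $|\zeta^+(T)|$ maximum (this is well-defined since $G$ is finite, and the set is nonempty because $\eta_0$ is in it). Note that substantiality and richness are both preserved under $\le$ in the relevant direction — substantiality outright, and richness because trappedness of $a$ in $\eta^+(T)$ only constrains $N_G^2[a]$, whose membership in $\zeta^+(T)$ is forced once it holds for $\eta_0$, and the leaf-edge condition $|\zeta(e,l)|=1$ I will need to argue is maintainable (or simply carried along: enlargements will be designed never to touch $\eta(e,l)$ for leaf-edges). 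So it remains to prove that for this maximal $\zeta$, every vertex $p \in G \setminus (\zeta^+(T) \cup \mathcal{J}_\zeta)$ is anticomplete to $\zeta^+(T)$.

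The core of the argument is a maximality-versus-extension dichotomy applied to a connected piece of $G \setminus \zeta^+(T)$. Suppose for contradiction some $p \in G\setminus(\zeta^+(T)\cup \mathcal{J}_\zeta)$ has a neighbor in $\zeta^+(T)$; since $\zeta$ is rich, $a$ is trapped and so (as in the pyramid arguments of Section~\ref{sec:pyramids}) $p$ is anticomplete to $N_G^2[a]$, hence has a neighbor in $\zeta(T)$ proper. Let $C$ be the vertex set of the component of $G \setminus \zeta^+(T)$ containing $p$; let $N = N_{\zeta(T)}(C)$ be its attachment set. I would split into two cases according to whether $N$ is local in $\zeta$. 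If $N$ is \emph{not} local, then by Lemma~\ref{twoattachments} there is a non-local $2$-subset, and by a minimal-path argument inside $C$ joining the two attachment points I would produce a path $Q$ in $G\setminus\zeta^+(T)$ whose attachment set on some $\eta$-pyramid (which exists since $\zeta$ is substantial — take an $\eta$-pyramid at the appropriate claw $(v,e_1,e_2,e_3)$, which sits inside $\zeta^+(T)$) is non-local, and then apply Theorem~\ref{pyramid2}: since $\zeta^+(T)$ plays the role of $H$ with $a$ trapped, $Q$ either has local attachment (contradicting the choice), contains a corner path, or contains a jewel for that pyramid. A corner path would let me extend $\zeta$ — roughly, reroute the rung through the corner path to reach $b_i$ — contradicting maximality; and the jewel case must be chased down to conclude that $p$ (or a vertex of $C$ near $p$) is in fact a jewel for $\zeta$, contradicting $p \notin \mathcal{J}_\zeta$ once one checks the attachment of $C$ forces $C = \{p\}$ or that $p$ itself realizes the jewel pattern. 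If $N$ \emph{is} local, say $N \subseteq \zeta(e)$ for an edge $e$ with ends $u,v$, or $N \subseteq B_\zeta(v)\cup\zeta(v)$ for a vertex $v$, then I want to absorb $C$ into $\zeta$: define $\zeta'$ by adding $C$ to $\zeta(e)$ (resp. to $\zeta(v)$) and checking the axioms (S1)--(S8). The edge case is the substantive one: I must verify (S5) still holds, i.e. every newly added vertex lies on a path to $\zeta(e,u)$ and to $\zeta(e,v)$ through the interior — this uses (S7)-type connectivity of $C$ together with the fact that $N$ meets $\eta(e,u)$ and $\eta(e,v)$ (if $N$ missed one side, $C$ would attach only to $B(w)\cup\eta(w)$ for an endpoint $w$ of $e$, reducing to the vertex case). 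Absorbing $C$ strictly increases $|\zeta^+(T)|$, contradicting maximality.

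The main obstacle I anticipate is the \textbf{jewel case of the non-local subcase}: disentangling exactly when a wide attachment of $C$ to an $\eta$-pyramid forces $p\in\mathcal{J}_\zeta$ rather than allowing an extension of $\zeta$. One has to be careful that the jewel produced by Theorem~\ref{pyramid2} is a jewel for an $\eta$-\emph{pyramid} at a genuine claw $(v,e_1,e_2,e_3)$ in $T$ (matching the definition of $\mathcal{J}_\zeta$), and that the offending vertex is $p$ itself — which means arguing that the path $Q\subseteq C$ witnessing non-locality can be taken to be a single vertex, i.e. that the minimal obstruction in $C$ is a single jewel vertex, using minimality of $|Q|$ together with Lemma~\ref{major}. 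A secondary but routine obstacle is the bookkeeping for the extension $\zeta'$: confirming that pushing a corner path into a rung keeps (S5) and keeps the structure substantial and rich, in particular that we never disturb the singleton sets $\zeta(e,l)$ at leaf-edges (which we can always arrange, since a corner path attaches at some $b_i\in\eta(e_i,v)$ with $v$ a branch vertex, away from the leaves). Modulo these two points, the argument is a standard "maximal counterexample-free structure" induction.
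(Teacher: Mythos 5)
Your high-level shape — start from the tame structure, enlarge greedily, then argue by contradiction using a local/non-local dichotomy and Theorem~\ref{pyramid2} — is the right one, but the way you set up the maximization creates two genuine gaps that the paper's proof is specifically engineered to avoid.

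\textbf{Tameness is load-bearing and you discard it.} You maximize $|\zeta^+(T)|$ over all \emph{substantial and rich} strip-structures $\ge\eta_0$, not over \emph{tame} ones. But the pyramid step in your non-local case requires a $\zeta$-pyramid whose paths actually pass through the non-local attachment points $x_1,x_2$. A $\zeta$-pyramid is built from $\zeta(e)$-rungs, and without tameness a vertex of $\zeta(e)$ (in $\tilde\zeta(e)$) need not lie on any rung; your maximal $\zeta$ will typically not be tame. In that situation $N_\Sigma(Q)$ can be local in $\Sigma$ even though $\{x_1,x_2\}$ is non-local in $\zeta$, and Theorem~\ref{pyramid2} yields nothing. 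The paper instead takes $\eta$ maximal among \emph{tame}, substantial, rich strip-structures, proves statements (5.1)--(5.5) — whose proofs invoke tameness explicitly to place the non-local attachment points on rung-built pyramid paths — and only at the very end constructs the (non-tame) $\zeta$ from $\eta$ by an explicit absorption.

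\textbf{Your components may contain jewels, and these cannot be absorbed.} You take $C$ to be the component of $G\setminus\zeta^+(T)$ containing the bad vertex $p$, and in the local case you propose adding $C$ to $\zeta(e)$ or $\zeta(v)$. But $C$ can contain vertices of $\mathcal{J}_\zeta$: a jewel for $\zeta$ at a seagull $(v,e_1,e_2)$ has neighbors in both $\zeta(e_1)$ and $\zeta(e_2)$ away from $\zeta(e_i,v)$, so placing it inside a single $\zeta(e)$ (or $\zeta(v)$) violates (S4)/(S6). The paper circumvents this by defining $M=G\setminus(\eta^+(T)\cup\mathcal{J}_\eta)$, working with components of $M$ (jewel-free by construction), and then the jewel case of Theorem~\ref{pyramid2} is ruled out for free because $P\subseteq M$ forces $P\cap\mathcal{J}_\eta=\emptyset$. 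This is considerably cleaner than your plan to ``chase down'' that the obstructing vertex is itself a jewel — a step you flag as a difficulty, and which indeed does not obviously go through: the jewel Theorem~\ref{pyramid2} returns lives on $P$, which is a path in $C$, not necessarily the vertex $p$ you started with, so it does not directly contradict $p\notin\mathcal{J}_\zeta$.

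The upshot is that the maximization must be over tame structures and the components must be taken inside $M$, not inside $G\setminus\zeta^+(T)$; with those changes your local/non-local dichotomy becomes essentially the paper's (5.1)--(5.5) plus the final $\zeta$-construction, but as written the argument does not close.
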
  
\begin{proof}
  Let $\eta$ be a tame, substantial, and rich $(T,a)$-strip-structure in $G$ such that $\eta(T)$ is maximal with respect to inclusion. Let $M=G\setminus (\eta^+(T)\cup \mathcal{J}_{\eta})$.
  
\sta{ \label{wideclaimpath}
 Let $P$ be a path in $M$ with ends $p_1$ and $p_2$ such that there exist $x_1\in N_{\eta(T)}(p_1)$ and $x_2\in N_{\eta(T)}(p_2)$ for which $\{x_1,x_2\}$ is not local in $\eta$, and such that $|P|\geq 1$ is minimum subject to this property. Then there exists $\{j_1,j_2\}=\{1,2\}$ and $f=v_1v_2\in E(T)$ such that $x_{j_1}\in B(v_{j_1})\setminus \eta(f)$ and $x_{j_2}\in (B(v_{j_2})\cup \eta(f))\setminus B(v_{j_1})$.}

Suppose not. For each $i\in \{1,2\}$, let $e_i \in E(T)$ be such that $x_i \in \eta(e_i)$ (hence $e_1 \neq e_2$) and let $s_i$ be an end of $e_i$ such that there exists a path $\Lambda_0$ (possibly of length zero) from $s_1$ to $s_2$ in $T \setminus \{e_1,e_2\}$.  We claim that there is a vertex $v\in \Lambda_0$ such that $B(v) \cap \{x_1,x_2\} = \emptyset$. Suppose first that $s_1\neq s_2$. Let $v_1$ be the unique neighbor of $s_1$ in $\Lambda_0$. Then we have $x_1\notin B(v_1)$ and $x_2\notin B(s_1)$. Also, since $f=s_1v_1$ does not satisfy \eqref{wideclaimpath}, we have either $x_1\notin B(s_1)$ or $x_2\notin B(v_1)$. But then either $v=s_1$ or $v=v_1$ satisfies the claim. Thus, we may assume that $v=s_1=s_2$. Note that since neither $e_1$ nor $e_2$ satisfies \eqref{wideclaimpath}, we have $x_1\notin B(s_1)$ and $x_2\notin B(s_2)$. In other words, we have  $B(v) \cap \{x_1,x_2\} = \emptyset$, and the claim follows. Henceforth, let $v$ be as promised by the above claim. For each $i\in \{1,2\}$, let $u_i$ be the end of $e_i$ distinct from $s_i$ (hence $u_1\neq u_2$). Let $\Lambda=u_1\dd s_1\dd \Lambda_0\dd s_2\dd u_2$ and let $u_1',u_2'$ be the neighbors of $v$ in $\Lambda$ such that $\Lambda$ traverses $u_1,u_1',v,u_2',u_2$ in this order (so possibly $u_1=u_1'$ or $u_2=u_2'$). Let $e_i'=u_i'v$ for each $i\in \{1,2\}$. Since $T$ is smooth, there exists a vertex $u_3'\in N_{T}(v)\setminus \Lambda$. Let $e'_3=u_3'v$. For each $i\in \{1,2,3\}$, let $T_i$ be the component of $T\setminus (N_T(v)\setminus \{u_i'\})$ containing $v$ (so $e'_i\in E(T_i)$). Then since $B(v)\cap \{x_1,x_2\}=\emptyset$ and since $\eta$ is tame and substantial, there exists an $\eta$-pyramid $\Sigma$ at $(v,e'_1,e'_2,e'_3)$ with apex $a$, base $b_1b_2b_3$ and paths $P_1,P_2,P_3$ such that we have
\begin{itemize}
    \item $b_i\in \eta(e_i',v)$ and $P_i\setminus \{a,b_i\}\subseteq \eta(T_i)\setminus B(v)$ for each $i\in \{1,2,3\}$; and
    \item $x_i\in P_i^*$ for each $i\in \{1,2\}$.
\end{itemize}
In particular, the second bullet above implies that $N_{\Sigma}(P)$ is not local in $\Sigma$ and $P$ is not a corner path for $\Sigma$. Since $P\subseteq M$, we have $P\cap \mathcal{J}_{\eta}=\emptyset$. Thus, since $\Sigma$ is an $\eta$-pyramid, it follows that $P$ contains no jewel for $\Sigma$. Also, since $\eta$ is rich, $a$ is trapped in $\eta^+(T)$. Therefore, applying Theorem~\ref{pyramid2} to $G$, $H=\eta^+(T)$, $a$, $\Sigma$ and $P$, we deduce that $P$ contains a corner path for $\Sigma$. On the other hand, note that by the second bullet above, for every vertex $x\in \Sigma\setminus \{a\}$, either $\{x,x_1\}$ or $\{x,x_2\}$ is not local in $\eta$. From this, the minimality of $|P|$ and the fact that $\eta$ is rich, it follows that $P^*$ is anticomplete to $\Sigma$. But then $P$ is a corner path for $\Sigma$, a contradiction. This proves \eqref{wideclaimpath}.

\sta{\label{edgeexists} Let $P$ be a path in $M$ with ends $p_1$ and $p_2$ such that there exist $x_1\in N_{\eta(T)}(p_1)$ and $x_2\in N_{\eta(T)}(p_2)$ for which $\{x_1,x_2\}$ is not local in $\eta$, and such that $|P|\geq 1$ is minimum subject to this property. Let $f=v_1v_2\in E(T)$ and $\{j_1,j_2\}=\{1,2\}$ be as guaranteed by \eqref{wideclaimpath} applied to $P, x_1$ and $x_2$. Then we have $N_{\eta(T)}(P^*)\subseteq \eta(f,v_{j_1})$ and $N_{\eta(T)}(\{p_1,p_2\})\subseteq \eta(f)\cup B(v_1)\cup B(v_2)$.}

Suppose not. Without loss of generality, we may assume that $j_1=1$ and $j_2=2$. Note that by the minimality of $|P|$, we have $N_{\eta(T)}(P^*)\subseteq \eta(f,v_1)$. Therefore, one of $p_1$ and $p_2$ has a neighbor in $\eta(T)\setminus (\eta(f)\cup B(v_1)\cup B(v_2))$; say $p_1$ is adjacent to $x_1'\in \eta(T)\setminus (\eta(f)\cup B(v_1)\cup B(v_2))$. For each $i\in \{1,2\}$, let $T_i$ be the component of $T\setminus \{f\}$ containing $v_i$. It follows that there exists $j\in \{1,2\}$ such that $x_1'\in \eta(T_j)\setminus B(v_j)$. Assume that $|P|>1$. By the minimality of $|P|$, we have $j=1$. But then $P, x_1'$ and $x_{2}$ violate \eqref{wideclaimpath}. We deduce that $|P|=1$. But now $P, x_1'$ and $x_{3-j}$ violate \eqref{wideclaimpath}. This proves \eqref{edgeexists}.

\sta{\label{almost_one_end_complete} Let $P$ be a path in $M$ with ends $p_1$ and $p_2$ such that there exist $x_1\in N_{\eta(T)}(p_1)$ and $x_2\in N_{\eta(T)}(p_2)$ for which $\{x_1,x_2\}$ is not local in $\eta$, and such that $|P|\geq 1$ is minimum subject to this property. Suppose that there exist $\{k_1,k_2\}=\{1,2\}$,  $f=v_1v_2\in E(T)$ and $e_1\in E(T)\setminus \{f\}$ incident with $v_{k_1}$ such that $p_{k_1}$ has a neighbor in $\eta(e_1,v_{k_1})$ and $p_{k_2}$ has a neighbor in $(B(v_{k_2})\cup \eta(f))\setminus B(v_{k_1})$. Then $p_{k_1}$ is complete to $B(v_{k_1})\setminus (\eta(e_1,v_{k_1})\cup \eta(f))$.}

Due to symmetry, we may assume that $k_1=1$ and $k_2=2$. Let $e_3\in E(T)\setminus \{e_1,f\}$ be incident with $v_1$ and let $b_3\in \eta(e_3,v_1)$ be arbitrary. We need to show that $p_1$ is adjacent to $b_3$. Suppose for a contradiction that $p_1$ and $b_3$ are non-adjacent. Let $b_1\in \eta(e_1,v_1)$ be adjacent to $p_1$ and let $x\in (B(v_{2})\cup \eta(f))\setminus B(v_{1})$ be adjacent to $p_2$.  Let $T_2$ be the component of $T\setminus (N_T(v_1)\setminus \{v_2\})$ containing $v_1$ (so $f\in E(T_2)$). Also, for each $i\in \{1,3\}$, let $u_i$ be the end of $e_i$ distinct from $v_1$ and let $T_i$ be the component of $T\setminus (N_T(v_1)\setminus \{u_i\})$ containing $v_1$ (so $e_i\in E(T_i)$). By \eqref{wideclaimpath} and \eqref{edgeexists}, there exists an edge $f'=v'_1v'_2\in E(T)$ such that $N_{\eta(T)}(\{p_1,p_2\})\subseteq \eta(f')\cup B(v'_1)\cup B(v'_2)$. This, along with the minimality of $|P|$, implies that $p_1$ is anticomplete to $(\eta(T_1)\cup \eta(T_3))\setminus B(v_1)$, $P\setminus \{p_1\}$ is anticomplete to $\eta(T_1)\cup \eta(T_3)$, and $P\setminus \{p_2\}$ is anticomplete to $\eta(T_2)\setminus B(v_1)$. Since $p_2$ has a neighbor $x\in (B(v_{2})\cup \eta(f))\setminus B(v_{1})$ and since $\eta$ is tame, there exists a path $P_2$ in $G$ from $a$ to $p_2$ with $P_2^*\subseteq \eta(T_2)\setminus B(v_1)$. Also, for each $i\in \{1,3\}$, there exists a path $P_i$ in $G$ from $a$ to $b_i$ with $P_i^*\subseteq \eta(T_i)\setminus B(v_1)$.  Note that since $\eta$ is rich, it follows that $P$ is anticomplete to $N_G[a]$; in particular, $P_1$ has length at least two. But now $G$ contains a theta with ends $a$ and $b_1$ and paths $P_1, a\dd P_2\dd p_2\dd P\dd p_1\dd b_1$ and $b_1\dd b_3\dd P_3\dd a$, a contradiction. This proves \eqref{almost_one_end_complete}.\medskip

The following is immediate from \eqref{almost_one_end_complete} and the fact that $T$ is smooth.

\sta{\label{one_end_complete} Let $P$ be a path in $M$ with ends $p_1$ and $p_2$ such that there exists $x_1\in N_{\eta(T)}(p_1)$ and $x_2\in N_{\eta(T)}(p_2)$ for which $\{x_1,x_2\}$ is not local in $\eta$, and such that $|P|\geq 1$ is minimum subject to this property. Suppose that there exist $\{k_1,k_2\}=\{1,2\}$ and  $f=v_1v_2\in E(T)$ such that $x_{k_1}\in B(v_{k_1})\setminus (\eta(f))$ and $x_{k_2}\in (B(v_{k_2})\cup \eta(f))\setminus B(v_{k_1})$. Then $p_{k_1}$ is complete to $B(v_{k_1})\setminus \eta(f)$.}

We now deduce:

\sta{\label{localcomp} Let $D$ be a component of $M$. Then $N_{\eta(T)}(D)$ is local in $\eta$.}
     
Suppose not. By Lemma~\ref{twoattachments}, there exist $x_1,x_2 \in N_{\eta(T)}(D)$ such that $\{x_1,x_2\}$ is not local in $\eta$. For each $i\in \{1,2\}$, let $p_i$ be a neighbor of $x_i$ in $D$. Since $D$ is connected, there exists a path $P$ in $D\subseteq M$ from $p_1$ to $p_2$. In other words, there exists a path $P$ in $M$ with ends $p_1,p_2$ along with $x_1\in N_{\eta(T)}(p_1)$ and $x_2\in N_{\eta(T)}(p_2)$ such that $\{x_1,x_2\}$ is not local in $\eta$. Now,  let $P$ be a path in $M$ with ends $p_1$ and $p_2$ such that there exists $x_1\in N_{\eta(T)}(p_1)$ and $x_2\in N_{\eta(T)}(p_2)$ for which $\{x_1,x_2\}$ is not local in $\eta$, and such that $|P|\geq 1$ is minimum subject to this property. So we can apply \eqref{wideclaimpath} to $P$, $x_1$ and $x_2$. Let $\{j_1,j_2\}=\{1,2\}$ and $f=v_1v_2\in E(T)$ be as in \eqref{wideclaimpath}. We may assume without loss of generality that $j_1=1$ and $j_2=2$; thus, $v_1$ is a branch vertex of $T$. It follows from \eqref{edgeexists}  that $N_{\eta(T)}(P^*)\subseteq \eta(f,v_{1})$ and $N_{\eta(T)}(\{p_1,p_2\})\subseteq \eta(f)\cup B(v_1)\cup B(v_2)$. By \eqref{one_end_complete} applied to $k_1=1$ and $k_2=2$, $p_1$ is complete to $B(v_1)\setminus \eta(f)$. Also, from \eqref{one_end_complete} applied to $k_1=2$ and $k_2=1$, it follows that either $p_2$ is complete to $B(v_2)\setminus \eta(f)$ and $B(v_2)\setminus \eta(f)\neq \emptyset$, or $p_2$ is anticomplete to $B(v_2)\setminus \eta(f)$. Note that if $|P|>1$, then by the minimality of $|P|$, we have $N_{\eta(T)}(p_1)\subseteq B(v_1)$ and $N_{\eta(T)}(p_2)\subseteq (B(v_2)\cup \eta(f))\setminus B(v_1)$. Let us define  $\eta': V(T)\cup E(T)\cup (E(T)\times V(T))\subseteq 2^{G\setminus \{a\}}$ as follows. Let $\eta'(f)=\eta(f)\cup P$ and  let $\eta'(f,v_1)=\eta(f,v_1)\cup \{p_1\}$. Let
\begin{itemize}
    \item $\eta'(f,v_2)=\eta(f,v_2)\cup \{p_2\}$ if $p_2$ is complete to $B(v_2)\setminus \eta(f)$ and $B(v_2)\setminus \eta(f)\neq \emptyset$; and
     \item $\eta'(f,v_2)=\eta(f,v_2)$ if $p_2$ is anticomplete to $B(v_2)\setminus \eta(f)$.
\end{itemize}
Let $\eta'=\eta$ elsewhere on $V(T)\cup E(T)\cup (E(T)\times V(T))$. Then since $\eta$ is tame, substantial and rich, and $p_2$ is adjacent to $x_2\in B(v_2)\cup \eta(f))\setminus B(v_1)$, it is straightforward to check that $\eta'$ is also a tame, substantial and rich $(T,a)$-strip-structure. But we have $\eta'(T)=\eta(T)\cup P$, a contradiction with the maximality of $\eta(T)$. This proves \eqref{localcomp}.\medskip

The proof is almost concluded. Let $X$ be the union of all the components $D$ of $M$ such that $D$ is anticomplete to $\eta^+(T)$. Since $\eta$ is rich, it follows that $a$ is anticomplete to $M\setminus X$, as well. Thus, for every component $D$ of $M\setminus X$, $N_{\eta^+(T)}(D)=N_{\eta(T)}(D)$ is non-empty.
By \eqref{localcomp}, for every component $D$ of $M \setminus X$, $N_{\eta(T)}(D)$ is local in $\eta$. Let $\mathcal{D}$ be the set of all components $D$ of $M\setminus X$ for which we have $N_{\eta^+(T)}(D) \subseteq B_{\eta}(v)$ for some $v\in V(T)$. Breaking the ties arbitrarily and by the definition of $X$, we may write $\mathcal{D}=\bigcup_{v\in V(T)}\mathcal{D}_v$, where
\begin{itemize}
    \item for all distinct $u,v\in V(T)$, we have $\mathcal{D}_u\cap\mathcal{D}_v=\emptyset$; and
    \item for all $v\in V(T)$ and every $D\in \mathcal{D}_v$, we have $N_{\eta^+(T)}(D) \subseteq B_{\eta}(v)$ and $N_{\eta^+(T)}(D)\neq \emptyset$.
\end{itemize}
Also, for every $e=uv\in E(T)$, let $\mathcal{D}_e$ be the set of all components $D$ of $M\setminus X$ for which we have $N_{\eta^+(T)}(D)\subseteq \eta(e)$ and 
\begin{itemize}
    \item either $N_{\eta(T)}(D)\cap \eta^{\circ}(e)\neq \emptyset$, or;
    \item $N_{\eta(T)}(D)\cap (\eta(e,u)\setminus \eta(e,v))\neq \emptyset$ and $N_{\eta(T)}(D)\cap (\eta(e,v)\setminus \eta(e,v))\neq \emptyset$.
\end{itemize}
From the definition of $X$, it follows that every component of $M\setminus X$ belongs to exactly one of the sets $\{\mathcal{D}_v,\mathcal{D}_e:v\in V(T),e\in E(T)\}$ (note that since $\eta$ is rich, $a$ is anticomplete to each such component).

Let $\zeta: V(T)\cup E(T)\cup (E(T)\times V(T))\subseteq 2^{G\setminus \{a\}}$ be defined as follows. For all $v\in V(T)$ and $e\in E(T)$, let
\begin{itemize}
    \item $\zeta(v)=\bigcup_{D\in \mathcal{D}_v}D$; 
    \item $\zeta(e)=\eta(e)\cup (\bigcup_{D\in \mathcal{D}_e}D)$; and
    \item $\zeta(e,v)=\eta(e,v)$.
\end{itemize}
It is easily seen that $\zeta$ satisfies the conditions (S\ref{s1}-S\ref{s8}) from the definition of a $(T,a)$-strip-structure. In particular, since $\eta$ is rich, $\zeta$ satisfies (S\ref{s2}), and from the definitions of $X$, $\mathcal{D}_v$ and $\mathcal{D}_e$, it follows that $\zeta$ satisfies (S\ref{s5}) and (S\ref{s7}). Also, we have $\eta\leq \zeta$.
  
Now, since $\eta$ is substantial and rich, since $\eta\leq \zeta$, and from the definitions of $X$ and $\zeta$, it follows that $\zeta$ is a substantial and rich $(T,a)$-strip-structure with $\mathcal{J}_{\zeta}=\mathcal{J}_{\eta}$. Moreover, note that we have $\zeta^+(T)=\eta(T)^+\cup (M\setminus X)$, and so $G\setminus (\zeta^+(T)\cup \mathcal{J}_{\zeta})=G\setminus (\zeta^+(T)\cup \mathcal{J}_{\eta})=X$ is anticomplete to $\zeta^+(T)$. This completes the proof of Theorem~\ref{stripstructure}.  
\end{proof}

\section{Jewels under the loupe}\label{sec:jewelsconnectivity}
Here we revisit jewels for strip-structures, establishing several results about their properties in various settings. This will help attune Theorem~\ref{stripstructure} for its application in the proof of Theorem~\ref{mainjewelconnectivity}.

First we need to introduce some notations. Let $G$ be a graph and let $a\in G$. Let $T$ be a smooth tree and let $\zeta $ be a $(T,a)$-strip-structure in $G$. Let $v\in V(T)$ and let $e\in E(T)$ be incident with $v$. We denote by $\zeta_e(v)$ the set of all components $D$ of $\zeta(v)$ for which we have $N_{B(v)}(D)\subseteq \zeta(e,v)$, or equivalently, $N_{\zeta(T)\setminus \zeta(e,v)}(D)=\emptyset$.

Let $(v,e_1,e_2)$ be a seagull in $T$ and let $u_i$ be the end of $e_i$ distinct from $v$ for each $i\in \{1,2\}$. We define
$$\zeta(v,e_1,e_2)=\zeta(e_1)\cup \zeta(e_2)\cup \zeta_{e_1}(u_1)\cup \zeta_{e_2}(u_2)\cup \zeta(v).$$
We denote by $\mathcal{J}_{\zeta,(v,e_1,e_2)}$ the set of all jewels for $\zeta$ at $(v,e_1,e_2)$, and for every vertex $v\in V(T)$, $\mathcal{J}_{\zeta,v}$ stands for the set of all jewels for $\zeta$ at $v$. It follows that $\mathcal{J}_{\zeta,v}=\emptyset$ if $v$ is a leaf of $T$. 

The first result in this section describes, for a $(T,a)$-strip-structure in a theta-free graph, the attachments of jewels at a vertex of $T$.

\begin{theorem}\label{wherejewelsattach}
  Let $G$ be a theta-free graph and let $a\in V(G)$. Let $T$ be a smooth tree and let $\zeta$ be a $(T,a)$-strip-structure in $G$. Let $(v,e_1,e_2)$ be a seagull in $T$ and let $x \in \mathcal{J}_{\zeta,(v,e_1,e_2)}$. Then the following hold.
  \begin{itemize}
    \item We have $N_{\zeta^+(T)}(x)\subseteq \zeta(v,e_1,e_2)$, and so $N_{\zeta^+(T)}(\mathcal{J}_{\zeta,(v,e_1,e_2)})\subseteq \zeta(v,e_1,e_2)$. Consequently, for every vertex $v\in V(T)$, we have $N_{\zeta^+(T)}(\mathcal{J}_{\zeta,v})\subseteq \zeta(N_T[v])$, and for every two distinct vertices $v,v'\in V(T)$, we have $\mathcal{J}_{\zeta,v}\cap \mathcal{J}_{\zeta,v'}=\emptyset$.
    \item Assume that $\zeta$ is rich. Let $i \in \{1,2\}$ and let $R$ be a long $\zeta(e_i)$-rung, let $r$ be the end of $R$ in $\zeta(e_i,v)$ and let $r'$ be the unique neighbor of $r$ in $R$. Then either $x$ is anticomplete to $R$ or $N_R(x)=\{r,r'\}$.
\end{itemize}
\end{theorem}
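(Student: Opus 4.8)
The plan is to treat the two bullets separately, both by producing a forbidden theta whenever a jewel attaches outside the prescribed region.

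For the first bullet, fix $x\in\mathcal{J}_{\zeta,(v,e_1,e_2)}$, so there is an edge $e_3\in E(T)\setminus\{e_1,e_2\}$ incident with $v$ and an $\eta$-pyramid $\Sigma$ at $(v,e_1,e_2,e_3)$ with base $b_1b_2b_3$, $b_i\in\zeta(e_i,v)$, such that $x$ is a jewel for $\Sigma$ at $b_3$. By definition of a jewel for $\Sigma$ at $b_3$, $x$ is anticomplete to $P_3$ (hence to $a$), and $N_{P_j}(x)=N_{P_j}[b_j]$ for $j\in\{1,2\}$; in particular $x$ is adjacent to $b_1,b_2$ and to the unique neighbors $c_1,c_2$ of $b_1,b_2$ on $R_1,R_2$. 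Now suppose $x$ has a neighbor $y\in\zeta^+(T)\setminus\zeta(v,e_1,e_2)$. First I would dispose of the case $y\in\eta^+(T)$ by a direct theta argument: since $y\notin\zeta(v,e_1,e_2)$, using (S\ref{s4}) and (S\ref{s6}) one checks that there is a path $Q$ from $y$ to $a$ with interior anticomplete to $\{b_1,b_2,c_1,c_2\}$ (route $Q$ through the edges of $T$ on the far side of $v$, away from $e_1,e_2$, and through $e_3$ if necessary; this is where smoothness of $T$ and the definition of $\zeta(v,e_1,e_2)$ — which already absorbs $\zeta_{e_i}(u_i)$ and $\zeta(v)$ — are used). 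Then $G$ has a theta with ends $a$ and $x$, one path $a\dd P_1\dd b_1\dd x$ (a path because $x$ sees $b_1$ and $c_1$ but no earlier vertex of $P_1$, and is anticomplete to $P_2^*,P_3$), a second path $a\dd P_2\dd b_2\dd x$, and a third path $a\dd Q\dd y\dd x$, which are pairwise internally disjoint with anticomplete interiors — contradiction. The consequences then follow mechanically: $\zeta(N_T[v])\supseteq\bigcup\zeta(v,e_1,e_2)$ over all seagulls at $v$, giving $N_{\zeta^+(T)}(\mathcal{J}_{\zeta,v})\subseteq\zeta(N_T[v])$; and if $x\in\mathcal{J}_{\zeta,v}\cap\mathcal{J}_{\zeta,v'}$ with $v\neq v'$, then $x$ has (by the jewel definition applied at $v$) two neighbors $b_1,c_1$ with $b_1\in\zeta(e_1,v)\setminus\zeta(e_1,u_1)$ and $c_1\in\zeta(e_1)\setminus\zeta(e_1,v)$, forcing $N_{\zeta^+(T)}(x)$ to meet $\zeta(e_1)$ near $v$ and likewise some edge near $v'$; since $v\neq v'$ these two local regions are not simultaneously contained in any single $\zeta(w,f_1,f_2)$, contradicting the first part.

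For the second bullet, assume $\zeta$ is rich, fix $i\in\{1,2\}$ (say $i=1$), a long $\zeta(e_1)$-rung $R$, its end $r\in\zeta(e_1,v)$, and the neighbor $r'$ of $r$ on $R$. Suppose $x$ has a neighbor on $R$ but $N_R(x)\neq\{r,r'\}$. Write $u_1$ for the end of $e_1$ distinct from $v$. The key point is that $x$, being a jewel for some $\eta$-pyramid $\Sigma$ at $(v,e_1,e_2,e_3)$ whose first path has rung $R_1\subseteq\zeta(e_1)$, already satisfies $N_{R_1}(x)=\{b_1,c_1\}$ where $b_1$ is the $\zeta(e_1,v)$-end of $R_1$. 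Since richness gives $|\zeta(e_1,v)|$ is not forced to be $1$ in general, but (S\ref{s4}) makes $\zeta(e_1,v)$ complete to $\zeta(e_2,v)$ etc., I would build a theta as follows: take two $\zeta(e_1)$-rungs, namely $R$ and $R_1$ (if $R=R_1$ handle directly from the pyramid structure), joined at their $v$-ends via $\zeta(e_1,v)$ being a clique-like attachment set, and use $x$ together with $a$ as the two ends. Concretely, traversing $R$ from $r$ towards its other end $s\in\zeta(e_1,u_1)$, let $p$ be the first neighbor of $x$ on $R$; if $p\notin\{r,r'\}$ then $p\in R^*$ with $p$ nonadjacent to $r$, and $G$ contains a theta with ends $a$ and $x$: one path through $a$ and the $e_2$-side pyramid path to $b_2$ then to $x$; one path $a\dd\Gamma\dd s\dd R\dd p\dd x$ where $\Gamma$ reaches $s\in\zeta(e_1,u_1)$ from $a$ through $\eta(T_1)$; and one short path $a\dd(\text{path to }r)\dd r\dd x$ using $r\in N_{P_1}[b_1]\subseteq N(x)$ — the three having anticomplete interiors because $x$ is a jewel, so it misses $P_2^*$, $P_3$, and all of $R$ except $\{r,r'\}$-type vertices. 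The symmetric argument handling "$x$ sees $r$ but also some vertex of $R^*$ nonadjacent to $r'$", or "$x$ sees $r'$ but not $r$", produces the same contradiction. I expect the bookkeeping of which rung-ends lie in which attachment set $\zeta(e_1,v)$ versus $\zeta(e_1,u_1)$, and verifying the interiors are genuinely anticomplete (so the three paths form an induced theta, not merely three internally disjoint paths), to be the main obstacle; everything else is routine once Theorem~\ref{pyramid2} and the definitions of $\eta$-pyramid and jewel are unwound.
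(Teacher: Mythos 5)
Your proposal has two genuine gaps, one in each bullet.

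\textbf{First bullet.} You propose a single theta $(a\dd P_1\dd b_1\dd x,\ a\dd P_2\dd b_2\dd x,\ a\dd Q\dd y\dd x)$ and claim that $Q$ can be routed from $y$ to $a$ ``on the far side of $v$'' so its interior avoids $\{b_1,b_2,c_1,c_2\}$. (Minor point: since $x$ is adjacent to both $b_i$ and $c_i$, the first two paths have the chord $xc_i$ and must end with $c_i\dd x$, not $b_i\dd x$.) The more serious issue is that avoiding $\{b_1,b_2,c_1,c_2\}$ is far from enough: you need $Q^*\cup\{y\}$ to be disjoint from \emph{and} anticomplete to all of $P_1^*\cup P_2^*$, and this cannot always be arranged. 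Consider $y\in\zeta(f,u_1)$ for some edge $f\neq e_1$ incident with $u_1$: such a $y$ lies in $\zeta(T_1)$ but outside $\zeta(v,e_1,e_2)$, yet by (S\ref{s4}) it is complete to $\zeta(e_1,u_1)$, which contains a vertex of $P_1^*$ (the $u_1$-end of $R_1$). So $y$ itself, which is interior to your third path, has an edge to $P_1^*$, and the construction is not a theta. The paper's proof handles this by a case split: when $y$ sits in $\zeta(T_i)\setminus(\zeta(e_i)\cup\zeta_{e_i}(u_i)\cup\zeta(v))$, it abandons $S_i$ (the path running through $P_i^*$) and uses instead a path $Q_i$ routed through $P_3$, $b_3$ and $b_i$, keeping $Q_i^*$ in $(\zeta(T')\setminus\zeta(v))\cup(\zeta(e_i,v)\setminus\zeta(e_i,u_i))$. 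The theta is then $Q_i, S_i', S_{i'}$, and the anticomplete-interiors check goes through precisely because $Q_i^*$ never enters $\zeta(T_i)\setminus\zeta(e_i)$. Your single-theta construction misses exactly this side of the case analysis.

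\textbf{Second bullet.} Your direct theta construction is internally inconsistent. You let $p$ be the first neighbor of $x$ on $R$ starting from $r$ and suppose $p\notin\{r,r'\}$ --- in particular $x$ is \emph{not} adjacent to $r$. But your third path is ``$a\dd(\text{path to }r)\dd r\dd x$ using $r\in N_{P_1}[b_1]\subseteq N(x)$.'' The inclusion $r\in N_{P_1}[b_1]$ has no justification when $R\neq R_1$ (then $r$ is an arbitrary vertex of $\zeta(e_1,v)$, not $b_1$ or $c_1$), and the assertion $r\in N(x)$ directly contradicts the hypothesis $p\neq r$. You also invoke ``$x$ misses all of $R$ except $\{r,r'\}$-type vertices'' to verify the anticomplete-interiors condition, which is precisely the statement being proved --- a circularity. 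The paper avoids all this with a cleaner move: form a new pyramid $\Sigma'$ by replacing the rung $R_1$ in $P_1$ with $R$ (possible since both have their $v$-ends in $\zeta(e_1,v)$ and both $u_1$-ends lie in $\zeta(e_1,u_1)$, hence are adjacent to the next vertex of $P_1\setminus R_1$ by (S\ref{s4})). Then $x$ is wide for $\Sigma'$, not a corner path, so Lemma~\ref{major} forces it to be a jewel for $\Sigma'$ at $b_3$, giving $N_{P_1'}(x)=\{r,r'\}$ and hence $N_R(x)=\{r,r'\}$ immediately. This ``re-pyramid and reuse Lemma~\ref{major}'' step is the key idea you are missing.
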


\begin{proof}
 Note that $v$ is a branch vertex of $T$. For each $i\in \{1,2\}$, let $u_i$ be the end of $e_i$ distinct from $v$ and let $T_i$ be the component of $T\setminus (N_T(v)\setminus \{u_i\})$ containing $v$. Let $T'$ be the component of $T\setminus \{u_1,u_2\}$ containing $v$. Let $x\in \mathcal{J}_{\zeta,(v,e_1,e_2)}$. Since $x \in \mathcal{J}_{\zeta,(v,e_1,e_2)}$ is a jewel for $\zeta$, there exist an edge $e_3 \in E(T)\setminus \{e_1,e_2\}$ incident with $v$ and  a $\zeta$-pyramid $\Sigma$ at $(v,e_1,e_2,e_3)$ with apex $a$, base $b_1b_2b_3$, and paths $P_1,P_2,P_3$ such that $x$ is a jewel for $\Sigma$ at $b_3$. In particular, for each $j\in \{1,2,3\}$, $P_j\cap \zeta(e_j)$ is a long $\zeta(e_j)$-rung $R_j$ with $b_j$ as its end in $\zeta(e_j,v)$. Also, $x$ is anticomplete to $P_3$ (and so $x$ is not adjacent to $a$), and for each $j\in \{1,2\}$, assuming $c_j$ to be the unique vertex in $N_{R_j}(b_j)=N_{P_j}(b_j)$, $x$ is adjacent to $b_j\in \zeta(e_j,v)\setminus \zeta(e_j,u_j)$ and $c_j\in \zeta(e_j)\setminus \zeta(e_j,v)$. Therefore, there exist paths $Q_i,S_i$ of length more than one in $G$ from $a$ to $x$ for which we have $b_i\in Q_i^*\subseteq (\zeta(T')\setminus \zeta(v))\cup (\zeta(e_i,v)\setminus \zeta(e_i,u_i))$ and $c_i\in S_i^*\subseteq \zeta(T_i)\setminus (B(v)\cup \zeta(u_i)\cup \zeta(v))$.

To prove the first assertion of Theorem~\ref{wherejewelsattach}, assume for a contradiction that $x$ has a neighbor $y\in \zeta^+(T)\setminus \zeta(v,e_1,e_2)$. Since $x$ is not adjacent to $a$, we have $y\in \zeta(T)\setminus \zeta(v,e_1,e_2)$. First, assume that $y\in \zeta(T')\setminus \zeta(v)$. Then by (S\ref{s5}) and (S\ref{s7}) from the definition of a strip-structure, there exists a path $Q'$ of length more than one in $G$ from $a$ to $x$ with $Q'^*\subseteq \zeta(T')\setminus \zeta(v)$. But now $G$ contains a theta with ends $a,x$ and paths $a\dd S_1\dd x$, $a\dd S_2\dd x$ and $a\dd Q'\dd x$, a contradiction. It follows that $y\in \zeta(T_1\cup T_2)\setminus \zeta(v,e_1,e_2)$. In other words, for some $i\in \{1,2\}$, we have $y\in \zeta(T_i)\setminus (\zeta(e_i)\cup \zeta_{e_i}(u_i)\cup \zeta(v))$. As a result, by (S\ref{s5}) and (S\ref{s7}) from the definition a strip-structure, and by the definition of $\zeta_{e_i}(u_i)$, there exists a path $S_i'$ of length more than one in $G$ from $a$ to $x$ with $S_i'^*\subseteq \zeta(T_i)\setminus (\zeta(e_i)\cup \zeta_{e_i}(u_i)\cup \zeta(v))$. But now assuming $i'\in \{1,2\}$ to be distinct from $i$, $G$ contains a theta with ends $a,x$ and paths $a\dd Q_i\dd x$, $a\dd S_i'\dd x$ and $a\dd S_{i'}\dd x$, a contradiction. This proves the the first assertion.

Next we prove the second assertion of Theorem~\ref{wherejewelsattach}.  By symmetry, we may assume that $i=1$. Assume that $x$ has a neighbor $y\in R$. Let $P_1'=(P_1\setminus R_1)\cup R$. Let $\Sigma'$ be the pyramid with apex $a$, base $rb_2b_3$, and paths $P_1', P_2$ and $P_3$. Recall that since $\zeta$ is rich, $a$ is trapped in $\zeta^+(T)$. Also, $\Sigma'$ is a pyramid in $\zeta^+(T)$, $x$ is adjacent to $y\in P_1'$, $x$ is adjacent to $b_2,c_2\in P_2$, and $x$ is anticomplete to $P_3$. It follows that $x$ is a wide vertex for $\Sigma'$ which is not a corner path for $\Sigma'$. Now applying Lemma~\ref{major} to $G$, $a$, $H=\zeta^+(T)$, $\Sigma'$ and $p=x$, we deduce that $x$ is a jewel for $\Sigma'$ at $b_3$, and so $N_{R}(x)=N_{P_1'}(x)=\{r,r'\}$. This completes the proof of Theorem~\ref{wherejewelsattach}.
\end{proof}

Our next goal is to show that for every rich $(T,a)$-strip-structure in a graph $G\in \mathcal{C}_t$, there are only a few jewels at each vertex of $T$. Let us begin with a lemma, asserting that for a rich $(T,a)$-strip-structure $\zeta$ in a theta-free graph, each set $B_{\zeta}(v)$ is almost a clique.
\begin{lemma}\label{B(v)almostclique}
        Let $G$ be a theta-free graph and $a \in V(G)$. Let $T$ be a smooth tree and $\zeta$ be a rich $(T,a)$-strip-structure in $G$. Then for every $v\in V(T)$, there exists at most one edge $f\in E(T)$ such that $\zeta(f,v)$ is not a clique.
\end{lemma}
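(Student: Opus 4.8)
The plan is to argue by contradiction. Suppose there are two distinct edges $f_1,f_2\in E(T)$ incident with $v$ such that neither $\zeta(f_1,v)$ nor $\zeta(f_2,v)$ is a clique (note $v$ is then a branch vertex, $T$ being smooth). Fix non-adjacent $b_1,b_1'\in\zeta(f_1,v)$ and non-adjacent $b_2,b_2'\in\zeta(f_2,v)$; by (S\ref{s1}) these four vertices are distinct, and by (S\ref{s4}) $\{b_1,b_1'\}$ is complete to $\{b_2,b_2'\}$, so $b_1,b_2,b_1',b_2'$ span a four-cycle with diagonals $b_1b_1'$ and $b_2b_2'$ both absent. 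I will exhibit a theta with ends $b_2,b_2'$ and paths $P_1=b_2\dd b_1\dd b_2'$, $P_2=b_2\dd b_1'\dd b_2'$, and a third path $P_3$ from $b_2$ to $b_2'$ routed ``away from $v$'' through $\zeta(f_2)$. Since $b_1\not\sim b_1'$, the interiors $\{b_1\},\{b_1'\}$ of $P_1,P_2$ are anticomplete; since $b_2\not\sim b_2'$ the ends are non-adjacent and any $b_2$--$b_2'$ path has length at least two. The reason we need two \emph{distinct} non-cliquey edges (rather than a third edge at $v$) is that, by (S\ref{s4}), every vertex of $B_{\zeta}(v)$ outside $\zeta(f_1,v)$ is complete to $\{b_1,b_1'\}$; hence $P_3^{*}$ must avoid all of $B_\zeta(v)\setminus\zeta(f_1,v)$, in particular all of $\zeta(f_2,v)\setminus\{b_2,b_2'\}$, and so $P_3$ cannot be allowed to wander back into the local area of $v$.

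To construct $P_3$, note that since $\zeta(f_2,v)$ is not a clique we have $\zeta(f_2,v)\neq\zeta(f_2,u_2)$ (where $u_2$ is the end of $f_2$ other than $v$), so $f_2$ carries a long rung; more precisely, each of $b_2,b_2'$ either lies in $\zeta(f_2,u_2)$ (take the trivial path there) or is the $\zeta(f_2,v)$-end of a long $\zeta(f_2)$-rung reaching a vertex of $\zeta(f_2,u_2)\setminus\zeta(f_2,v)$ with interior in $\zeta^{\circ}(f_2)$ (by the remark following (S\ref{s5})). Let $w_1,w_1'\in\zeta(f_2,u_2)$ be the two landing vertices. If $w_1=w_1'$, concatenate the two pieces; otherwise $u_2$ must be a branch vertex, since richness forces $|\zeta(f_2,u_2)|=1$ when $u_2$ is a leaf, so we may choose an edge $g\neq f_2$ at $u_2$ and a vertex $w\in\zeta(g,u_2)$, which by (S\ref{s4}) is complete to $\zeta(f_2,u_2)$ and whose only neighbours in $\zeta(f_2)$ lie in $\zeta(f_2,u_2)$; inserting $w_1\dd w\dd w_1'$ links the two halves. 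Extract an induced $b_2$--$b_2'$ path $P_3$ from the resulting walk.

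Now $P_3^{*}$ lies in $\zeta^{\circ}(f_2)\cup(\zeta(f_2,u_2)\setminus\zeta(f_2,v))\cup\{w\}$, hence is disjoint from $\zeta(f_1)$ and from $\zeta(f_2,v)$. Since by (S\ref{s4}) the only edges between $\zeta(f_2)$ and $\zeta(f_1)$ run between $\zeta(f_2,v)$ and $\zeta(f_1,v)$, and $\zeta(g)$ is anticomplete to $\zeta(f_1)$ (as $g$ and $f_1$ share no end of $T$), it follows that $P_3^{*}$ is anticomplete to $\{b_1,b_1'\}=P_1^{*}\cup P_2^{*}$. Therefore $P_1,P_2,P_3$ are internally disjoint induced paths of length at least two from $b_2$ to $b_2'$ with pairwise anticomplete interiors and non-adjacent ends, so $G$ contains a theta, a contradiction; this proves Lemma~\ref{B(v)almostclique}. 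I expect the only genuinely fiddly step to be the construction of $P_3$ — keeping its interior inside $\zeta^{\circ}(f_2)$ so as to dodge every vertex of $\zeta(f_2,v)$ (each of which sees both $b_1$ and $b_1'$) and disposing of the degenerate cases where $b_2$ or $b_2'$ already lies in $\zeta(f_2,u_2)$ or where $u_2$ is a leaf; once $P_3$ is in hand, the theta is immediate.
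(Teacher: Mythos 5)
Your proof is correct and takes essentially the same route as the paper's: build a theta whose two ends are the non-adjacent pair in one of the sets $\zeta(f_i,v)$, whose two length-two paths pass through the non-adjacent pair in $\zeta(f_{3-i},v)$, and whose third path is routed away from $v$ through $\zeta(f_i)$-rungs (with, if necessary, one extra vertex at the far end $u_i$). The paper does this with the ends in $\zeta(f_1,v)$ and the third path through $\zeta(f_1)$ and $B(u_1)$; you do the symmetric thing with $f_2$, and your construction of $P_3$ is in fact a bit more explicit than the paper's (which just invokes connectedness of $R_{x_1}\cup R_{y_1}\cup B(u_1)$).

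One small blemish worth flagging: your opening line asserting that ``since $\zeta(f_2,v)$ is not a clique we have $\zeta(f_2,v)\neq\zeta(f_2,u_2)$'' is not justified (there is no reason $\zeta(f_2,v)=\zeta(f_2,u_2)$ would force that common set to be a clique), and you should drop it; fortunately it is also not needed — your subsequent case analysis (trivial rung vs.\ long rung at each of $b_2,b_2'$, then $w_1=w_1'$ vs.\ $w_1\neq w_1'$, using richness to rule out $u_2$ being a leaf in the latter case) handles every configuration, including $\zeta(f_2,v)=\zeta(f_2,u_2)$, without that preliminary claim. Everything else — the verification via (S1), (S4) that $P_3^*$ avoids $B(v)$ and is anticomplete to $\{b_1,b_1'\}$, and the fact that $g$ shares no end with $f_1$ — is correct and yields the theta as claimed.
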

\begin{proof}
Suppose for a contradiction that there are two distinct edges $f_1,f_2\in E(T)$ incident with $v$, and for each $i\in \{1,2\}$, there exist $x_i,y_i\in \zeta(f_i,v)$ such that $x_i$ is not adjacent to $y_i$. Then $v$ is not a leaf of $T$ and $H=x_1\dd x_2\dd y_1\dd y_2\dd x_1$ is a hole of length four in $G$. Since $\zeta$ is rich, $a$ is anticomplete to $H$. Let $f_1=u_1v$. Let $l_1$ be a leaf of $T$ which belongs to the component of $T\setminus \{v\}$ containing $u_1$, and let $\Lambda_1$ be the unique path in $T$ from $v$ to $l_1$ (so $f_1\in E(\Lambda_1)$). Let $R_{x_1}$ be an $\zeta(f)$-rung containing $x_1$ and let $R_{y_1}$ be an $\zeta(f)$-rung containing $y_1$. Since $\zeta$ is rich, $H_1=R_{x_1}\cup R_{x_2}\cup B(u_1)$ is a connected induced subgraph of $G$, and so there is a path $Q$ in $H_1$ from $x_1$ to $y_1$. It follows that $Q$ has length more than one and $Q^*\subseteq (B(u_1)\cup \zeta(f_1))\setminus B(v)$. But now $G$ contains a theta with ends $x_1,y_1$ and paths $Q$, $x_1\dd x_2\dd y_1$ and $x_1\dd y_2\dd y_1$, a contradiction. This completes the proof of Lemma~\ref{B(v)almostclique}.
\end{proof}
Recall the following classical result of Ramsey (see, for instance, \cite{ajtai} for an explicit bound.)
\begin{theorem}[See \cite{ajtai}]\label{classicalramsey}
For all integers $a,b\geq 1$, there exists an integer $R=R(a, b)\geq 1$ such that every graph $G$ on at least $R(a,b)$ vertices contains either a clique of cardinality $a$ or a stable set of cardinality $b$.
\end{theorem}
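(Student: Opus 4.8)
The plan is to prove this by the classical induction argument of Ramsey, which moreover yields an explicit value: I would show that one may take $R(a,b)=\binom{a+b-2}{a-1}$, proceeding by induction on $a+b$. For the base of the induction I would dispose of the cases $a=1$ and $b=1$ directly — a graph with at least one vertex trivially contains a clique (respectively a stable set) of cardinality $1$, and $\binom{b-1}{0}=\binom{a-1}{a-1}=1$ — so the claimed bound holds there. (Alternatively one could start at $a=2$ or $b=2$, where $R(2,b)=b$ and $R(a,2)=a$ are immediate.)

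For the inductive step, assume $a,b\geq 2$ and that the bound holds for all smaller values of $a+b$. Given a graph $G$ on at least $\binom{a+b-2}{a-1}$ vertices, I would fix a vertex $v\in V(G)$ and split $V(G)\setminus\{v\}$ into the set $X$ of neighbors of $v$ and the set $Y$ of non-neighbors of $v$. By Pascal's identity $\binom{a+b-2}{a-1}=\binom{a+b-3}{a-2}+\binom{a+b-3}{a-1}$, and since $|X|+|Y|=|V(G)|-1$, it follows that either $|X|\geq \binom{a+b-3}{a-2}=R(a-1,b)$ or $|Y|\geq \binom{a+b-3}{a-1}=R(a,b-1)$.

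In the first case, the induction hypothesis applied to $G[X]$ yields either a stable set of cardinality $b$ (which is also a stable set of cardinality $b$ in $G$, and we are done) or a clique of cardinality $a-1$, which together with $v$ (complete to $X$) forms a clique of cardinality $a$ in $G$. The second case is symmetric, using that $v$ is anticomplete to $Y$: $G[Y]$ contains either a clique of cardinality $a$ (done) or a stable set of cardinality $b-1$, to which $v$ may be adjoined to obtain a stable set of cardinality $b$ in $G$. This closes the induction and establishes the explicit admissible value $R(a,b)=\binom{a+b-2}{a-1}$.

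I do not expect any genuine obstacle here, as this is a textbook argument; the only points requiring care are that the degenerate cases $a=1$ and $b=1$ be handled so the induction is properly grounded, and that the binomial bookkeeping invoke Pascal's rule correctly. If one only wants the existence of \emph{some} $R(a,b)$ without a closed form, the same argument gives the recursion $R(a,b)\leq R(a-1,b)+R(a,b-1)$ together with $R(a,1)=R(1,b)=1$, of which $\binom{a+b-2}{a-1}$ is simply the solution.
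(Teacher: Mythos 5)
Your proof is correct, and it is the standard textbook argument: the double induction via Pascal's rule, with the base cases $a=1$ or $b=1$ grounding the recursion $R(a,b)\leq R(a-1,b)+R(a,b-1)$, and the neighbor/non-neighbor split around a fixed vertex $v$ driving the step. The case analysis is airtight: if both $|X|<\binom{a+b-3}{a-2}$ and $|Y|<\binom{a+b-3}{a-1}$, then $|X|+|Y|\leq\binom{a+b-2}{a-1}-2$, contradicting $|X|+|Y|=|V(G)|-1\geq\binom{a+b-2}{a-1}-1$.

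For comparison with the paper: the paper does not prove this statement at all — it is stated as a result from the literature and cited to Ajtai, Koml\'os and Szemer\'edi, whose contribution is a sharper explicit bound (their paper gives $R(3,b)=O(b^2/\log b)$, improving on the na\"ive binomial bound for the off-diagonal case). The paper only needs the existence of \emph{some} function $R(a,b)$, so your self-contained proof with the weaker bound $\binom{a+b-2}{a-1}$ is entirely adequate for every use the paper makes of Theorem~\ref{classicalramsey}; the citation in the paper is essentially a pointer to where an explicit (and in one regime stronger) bound can be found, not a substitute for the argument you gave.
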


We can now prove the second main result of this section.

\begin{theorem}\label{fewjewels}
For all positive integers $t,\delta$, there exists a positive integer $j=j(t,\delta)$ with the following property. Let $G\in \mathcal{C}_t$ be a graph, let $a\in G$ and let  $T$ be a smooth tree of maximum degree $\delta$. Let  $\zeta$ be a rich $(T,a)$-strip-structure in $G$. Then for every vertex $v\in V(T)$, we have $|\mathcal{J}_{\zeta,v}|<j$.
\end{theorem}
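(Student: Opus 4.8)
The plan is to show that a large set of jewels at a single branch vertex $v$ must contain either a large clique (impossible in $\mathcal{C}_t$) or a large \emph{stable} set of jewels, and then to derive a contradiction from the latter using a theta. First I would fix a branch vertex $v$ and a seagull $(v,e_1,e_2)$, and recall from the definition of an $\eta$-pyramid (and the fact that $\zeta$ is rich, hence substantial, so that pyramids exist at every claw at $v$) that each $x\in\mathcal{J}_{\zeta,(v,e_1,e_2)}$ comes with a base vertex $b_i\in\zeta(e_i,v)$ and a neighbour $c_i\in\zeta(e_i)\setminus\zeta(e_i,v)$ on a long $\zeta(e_i)$-rung, for $i\in\{1,2\}$, with $x$ adjacent to both $b_i$ and $c_i$. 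Since $T$ has maximum degree $\delta$, there are at most $\binom{\delta}{2}$ seagulls at $v$, so it suffices to bound $|\mathcal{J}_{\zeta,(v,e_1,e_2)}|$ for a fixed seagull; then $j(t,\delta)$ will be $\binom{\delta}{2}$ times that bound plus one.

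Next I would invoke Ramsey's theorem (Theorem~\ref{classicalramsey}): if $|\mathcal{J}_{\zeta,(v,e_1,e_2)}|$ is at least $R(t,s)$ for a suitable $s$, then since $G\in\mathcal{C}_t$ has no clique of cardinality $t$, there is a stable set $\{x_1,\dots,x_s\}$ of jewels for $(v,e_1,e_2)$. The key geometric step is then to analyse, for two non-adjacent jewels $x,x'$ in this stable set, how their attachments interact with the rungs $R_1,R_2$ through $e_1,e_2$. By Theorem~\ref{wherejewelsattach}(second bullet), each jewel $x$ either is anticomplete to a given long $\zeta(e_i)$-rung $R$ or satisfies $N_R(x)=\{r,r'\}$ where $r$ is the end of $R$ in $\zeta(e_i,v)$ and $r'$ its neighbour. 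Using this, and Lemma~\ref{B(v)almostclique} (so that on the side of $v$ the sets $\zeta(e_i,v)$ are essentially cliques, forcing the $b_i$'s to behave coherently), I would argue that two non-adjacent jewels cannot both attach to the ``same'' rung in the same way without creating a forbidden configuration — concretely, if $x$ and $x'$ are non-adjacent but both adjacent to $b_i$ and $c_i$ on a common rung $R_i$, then together with the path along $R_i$ from $c_i$ back towards $a$ (which exists since $\zeta$ is rich, so $a$ is trapped and all these rungs extend to $a$) one builds a theta with ends $a$ and (say) $b_i$, using $x$ and $x'$ on two of the three arms and the rung $R_i$ on the third. Ruling this out bounds the number of \emph{pairwise non-adjacent} jewels that can share a rung-pattern; combined with the fact that there are only boundedly many relevant rung endpoints near $v$ (again via Lemma~\ref{B(v)almostclique} and richness), this bounds $s$, hence bounds $|\mathcal{J}_{\zeta,(v,e_1,e_2)}|$.

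I expect the main obstacle to be the careful bookkeeping in the last step: a single jewel $x$ for the seagull $(v,e_1,e_2)$ only ``knows'' that it attaches nicely to the rungs $R_1,R_2$ belonging to \emph{its own} witnessing pyramid, which may use different third edges $e_3$ and different rungs for different jewels, so one has to first pass to a subfamily of jewels using a common pattern (another Ramsey/pigeonhole reduction on the — boundedly many — choices of $e_3$ and of the attachment vertices $b_1,b_2,c_1,c_2$ within the near-clique $B_\zeta(v)\cup\{c_1,c_2\}$), and only then exploit the theta argument. Getting the quantifiers right — first fix the seagull, then fix the pattern by Ramsey, then extract a stable set, then derive the theta — and checking that all the auxiliary paths back to $a$ genuinely have length at least two (so that the theta is a legitimate induced theta) is where the real work lies; the underlying obstruction argument itself is a routine theta-creation once the configuration is isolated.
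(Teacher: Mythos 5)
Your high-level strategy---pass via Ramsey to a stable set of jewels, then exhibit a forbidden induced subgraph---matches the paper's, but the proposed forbidden configuration does not exist, and a whole case is missing. The theta with ends $a$ and $b_i$ cannot be induced: by the second assertion of Theorem~\ref{wherejewelsattach}, a jewel $x$ attaching to a long $\zeta(e_i)$-rung $R$ is adjacent to \emph{both} the end $r=b_i\in\zeta(e_i,v)$ \emph{and} its unique neighbour $r'=c_i$ on $R$, so the arm of your theta that runs from $b_i$ along $R_i$ back towards $a$ has $c_i$ as an interior vertex adjacent to $x$ on another arm---the interiors are not anticomplete. You also do not say how an arm through $x$ is supposed to reach $a$: jewels are not adjacent to $a$ (richness traps $a$ in $\zeta^+(T)$), and by the first assertion of Theorem~\ref{wherejewelsattach} a jewel's attachments to $\zeta(T)$ lie in $\zeta(v,e_1,e_2)$, near $v$ rather than near $a$.

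Beyond that, you only treat the case where the non-adjacent jewels share a rung, and even there the correct obstruction is different: if a stable triple $X$ of jewels attaches to a common rung $S_i$ of $\zeta(e_i)$ for \emph{each} $i\in\{1,2\}$, then $X\cup\{s_1',s_2'\}$ (with $s_i'$ the near-$v$ interior vertex of $S_i$) is already a $K_{2,3}$ theta with ends $s_1',s_2'$---no excursion to $a$ is needed, and using both rungs is essential since $s_1',s_2'$ must be non-adjacent. The complementary case---two non-adjacent jewels $x,y$ attaching to \emph{different} long $\zeta(e_1)$-rungs $R_x,R_y$---is absent from your plan, and it is resolved not by a theta but by a \emph{prism} with triangles $xr_xr_x'$ and $yr_yr_y'$ (this is where $e_3$, the connectivity of $B(u_i)$, and the path from $x$ to $y$ through the $e_2$-side all enter). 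Finally, the ``pattern'' pigeonhole you sketch is both unnecessary and shaky: Lemma~\ref{B(v)almostclique} leaves one set $\zeta(f,v)$ potentially unbounded, so ``boundedly many rung endpoints near $v$'' is not quite available; the paper avoids all such bookkeeping by taking a stable set of size only three and a minimal covering set of rungs, which forces one of the two clean cases above.
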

\begin{proof}
  Let $ j=j(t,\delta)={\delta \choose 2}R(t,3)$ with $R(\cdot,\cdot)$ as in Theorem~\ref{classicalramsey}. Then in order to prove $|\mathcal{J}_{\zeta,v}|<j$, it is enough to show that $|\mathcal{J}_{\zeta,(v,e_1,e_2)}|<R(t,3)$ 
  for every seagull $(v,e_1,e_2)$ in $T$. Suppose for a contradiction that $|\mathcal{J}_{\zeta,(v,e_1,e_2)}| \geq R(t,3)$ for some seagull $(v,e_1,e_2)$ in $T$. Then $v$ is a branch vertex of $T$. For each $i \in \{1,2\}$, let $u_i$ be the end of $e_i$ different from $v$. Since $G \in \mathcal{C}_t$, it follows from Theorem~\ref{classicalramsey} that ${J}_{\zeta,(v,e_1,e_2)}$ contains a stable set $X$ of cardinality three. For every $x\in X$, since $x$ is a jewel for $\zeta$ at $(v,e_1,e_2)$, it follows that for every $i\in \{1,2\}$, there exists a long $\zeta(e_i)$-rung $R_i^x$ such that $Q_i^x=R_i^x\setminus \zeta(e_i,v)$ is a path in $\zeta(e_i)\setminus \zeta(e_i,v)$ from a neighbor of $x$ to a vertex in $\zeta(e_i,u_i)\setminus \zeta(e_i,v)$; in particular, $R_i^x$ contains a neighbor of $x$. Therefore, for each $i\in \{1,2\}$, we may pick a non-empty set $\mathcal{R}_i$ of long $\zeta(e_i)$-rungs such that every vertex in $X$ has a neighbor in at least one rung in $\mathcal{R}_i$, and with $\mathcal{R}_i$ minimal with respect to inclusion. We deduce:

  \sta{\label{bigminimal} There exists $i\in \{1,2\}$ with $|\mathcal{R}_i|>1$.}
  
  Suppose not. Then for every $i\in \{1,2\}$, there exists a long $\zeta(e_i)$-rung $S_i$ such that
  every vertex in $X$ has a neighbor in $S_i$. Let $s_i$ be the end of $S_i$ in $\zeta(e_i,v)$ and $s_i'$ be unique neighbor of $s_i$ in $S_i$. By the second assertion of Theorem~\ref{wherejewelsattach}, $X$ is complete to $\{s_1',s_2'\}$. But now $X\cup \{s_1',s_2'\}$ is a theta in $G$ with ends $s_1',s_2'$, a contradiction. This proves \eqref{bigminimal}.\medskip 

  By \eqref{bigminimal} and due to symmetry, we may assume that $|\mathcal{R}_1|>1$. This, together with the minimality of $\mathcal{R}_1$, implies that there exist distinct vertices $x,y\in X$ as well as distinct long $\zeta(e_1)$-rungs $R_x,R_y\in \mathcal{R}_1$ such that $x$ has a neighbor in $R_x$, $y$
  has a neighbor in $R_y$, $x$ is anticomplete to $R_y$, and $y$ anticomplete to
  $R_x$. Let $r_x$ and $r_y$ be the ends of $R_x$ and $R_y$ in $\zeta(e_1, v)$, respectively. Let $r'_x$ be the unique neighbor of $r_x$ in $R_x$ and $r'_y$ be the unique neighbor of $r_y$ in $R_y$. So we have $r_x',r_y'\in \zeta(e_1)\setminus \zeta(e_1, v)$. By the second assertion of Theorem~\ref{wherejewelsattach}, we have
  $N_{R_x \cup R_y}(x)=\{r_x,r_x'\}$ and $N_{R_x \cup R_y}(y)=\{r_y,r_y'\}$. It follows that $r_x,r_x'\in R_x\setminus R_y$ and $r_y,r_y'\in R_y\setminus R_x$. Also, $r_x$ is anticomplete to $R_y\setminus \{r_y\}$, as otherwise $(R_y\setminus \{r_y\})\cup \{r_x\}$ contains a long $\zeta(e_1)$-rung $R$ with $N_R(x)=\{r_x\}$, which violates the second assertion of Theorem~\ref{wherejewelsattach}. Similarly, $r_y$ is anticomplete to $R_x\setminus \{r_x\}$.
  
  Now, let $G_1=G[(B(u_1)\setminus \zeta(e_1,u_1))\cup ((R_x\cup R_y)\setminus \{r_x,r_y\})]$ and let $G_2=G[(B(u_2)\setminus \zeta(e_2,u_2))\cup Q_2^x\cup Q_2^y]$. Since $\zeta$ is rich, the second bullet in the definition of a rich strip-structure implies that $G_1$ and $G_2$ are connected. Consequently, there exists a path $Q_1$ in $G_1$ from $r'_x$ to $r'_y$, and 
  there exists a path $Q_2$ from $x$ to $y$ with $Q_2^*\subseteq G_2$. Also, since $v$ is a branch vertex of $T$,  we may choose an edge $e_3 \in E(T)\setminus \{e_1,e_2\}$ incident with $v$. By the first assertion of Theorem~\ref{wherejewelsattach}, $\{x,y\}$ is anticomplete to $\zeta(e_3,v)$.
  Let $Q_3$ be a path from $r_x$ to $r_y$ with $Q_3^*\subseteq \zeta(e_3,v)$
  (thus $|Q_3| \in \{2,3\}$). But now $G$ contains a prism with triangles $xr_xr'_x$ and $yr_yr'_y$ and paths
  $Q_1,Q_2,Q_3$, a contradiction. This completes the proof of Theorem~\ref{fewjewels}.
\end{proof}

Our last theorem in this section examines the connectivity within $G\setminus \zeta^+(T)$ for a $(T,a)$-strip-structure $\zeta$ arising from Theorem~\ref{stripstructure}. We need the following lemma, the proof of which is similar to that of Theorem~\ref{wherejewelsattach}.

\begin{lemma}\label{distantjewels}
Let $G$ be a theta-free graph and let $a\in V(G)$. Let $T$ be a smooth tree and let $\zeta $ be a $(T,a)$-strip-structure in $G$. Let $v,v'\in V(T)$ be distinct and let $P$ be a path in $G\setminus \zeta^+(T)$ with ends $x,x'$ such that $x\in \mathcal{J}_{\zeta,v}$, $x'\in \mathcal{J}_{\zeta,v'}$, and $P^*$ is anticomplete to $\zeta^+(T)$. Then $v$ and $v'$ are adjacent in $T$.
\end{lemma}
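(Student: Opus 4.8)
The plan is to follow the template of the proof of Theorem~\ref{wherejewelsattach}: assuming $v$ and $v'$ are non-adjacent in $T$, I will exhibit a theta in $G$, contradicting theta-freeness. Unpacking the hypothesis, since $x\in\mathcal{J}_{\zeta,v}$ there are a seagull $(v,e_1,e_2)$, an edge $e_3\in E(T)\setminus\{e_1,e_2\}$ at $v$, and a $\zeta$-pyramid $\Sigma$ at $(v,e_1,e_2,e_3)$ with apex $a$, base $b_1b_2b_3$ and paths $P_1,P_2,P_3$, such that $x$ is a jewel for $\Sigma$ at $b_3$; in particular $x$ is anticomplete to $P_3$ (so $x\not\sim a$), and $N_{P_i}(x)=\{b_i,c_i\}$ for $i\in\{1,2\}$, where $c_i$ is the neighbour of $b_i$ on $P_i$. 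Fix analogously $(v',e_1',e_2',e_3')$, a $\zeta$-pyramid $\Sigma'$, a base $b_1'b_2'b_3'$ and paths $P_1',P_2',P_3'$, with $x'$ a jewel for $\Sigma'$ at $b_3'$. Then $v,v'$ are branch vertices of $T$, and $x\ne x'$ by the first assertion of Theorem~\ref{wherejewelsattach}. From $\Sigma$ I extract four induced paths from $a$ to $x$: for $i\in\{1,2\}$ let $S_i=(P_i\setminus\{b_i\})\cup\{x\}$ and $Q_i=P_3\cup\{b_i,x\}$; these are induced because $N_{P_i}(x)=\{b_i,c_i\}$, $x$ is anticomplete to $P_3$, and $N_{P_3}(b_i)=\{b_3\}$. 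Since the paths of a $\zeta$-pyramid lie inside edge-bags, $S_i^*\subseteq\zeta(e_i)\cup\zeta(T_{u_i,v})$ and $Q_i^*\subseteq\zeta(e_i,v)\cup\zeta(e_3)\cup\zeta(T_{u_3,v})$, where $u_j$ is the end of $e_j$ other than $v$; define $S_1',S_2'$ from $\Sigma'$ similarly. Note that $S_i$ and $Q_i$ cannot both be paths of one theta, since $b_i\in Q_i^*$ is adjacent to $c_i\in S_i^*$; but $\{S_1,S_2\}$, $\{S_1,Q_2\}$ and $\{S_2,Q_1\}$ are each pairs of paths with internally disjoint and mutually anticomplete interiors (from the pyramid structure together with (S\ref{s4}) and (S\ref{s6})).

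Now assume $v\not\sim v'$. Let $\pi$ be the $v$-to-$v'$ path of $T$, with first edge $e^*$ (at $v$) and last edge $f^*$ (at $v'$); then $e^*\ne f^*$, and $e_1,e_2,e_1',e_2'$ are pairwise distinct, since any coincidence among them would force $v\sim v'$. The tree fact I will use repeatedly is that, for an edge $e\ne e^*$ at $v$ and an edge $e'\ne f^*$ at $v'$, the branch of $T$ at $v$ through $e$ and the branch at $v'$ through $e'$ are vertex-disjoint and joined by no edge of $T$; hence by (S\ref{s4}) and (S\ref{s6}) the corresponding portions of $\zeta(T)$ are anticomplete in $G$. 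Combined with the first assertion of Theorem~\ref{wherejewelsattach} (which confines $N_{\zeta^+(T)}(x)$ to $\zeta(v,e_1,e_2)$ and $N_{\zeta^+(T)}(x')$ to $\zeta(v',e_1',e_2')$, two sets that are disjoint here since all four edges are distinct and the relevant bags are pairwise disjoint), a short case analysis on where $e^*$ sits among $e_1,e_2,e_3$ selects two escape paths $E_1,E_2$ from $a$ to $x$ that form one of the admissible pairs and whose interiors are disjoint from and anticomplete to $\zeta(v',e_1',e_2')$ and to $\{x'\}$: use $\{S_2,Q_1\}$ if $e^*=e_1$, use $\{S_1,Q_2\}$ if $e^*=e_2$, and use $\{S_1,S_2\}$ in all other cases. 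Symmetrically, pick $i'\in\{1,2\}$ with $e_{i'}'\ne f^*$ and set $F=S_{i'}'$, an induced $a$-to-$x'$ path whose interior is disjoint from and anticomplete to $\zeta(v,e_1,e_2)$ and to $\{x\}$; one also checks $E_j^*\cap F^*=\emptyset$ and $E_j^*$ anticomplete to $F^*$ for $j\in\{1,2\}$.

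Finally, I assemble a theta in $G$ with ends $a$ and $x$ (non-adjacent, as $x$ is a jewel) and paths $E_1$, $E_2$, and $F$ followed by $P$ (that is, $a$ along $F$ to $x'$, then $P$ to $x$). Any two of these meet only in $\{a,x\}$: $E_1^*\cap E_2^*=\emptyset$ by the choice of pair, $E_j^*\cap F^*=\emptyset$ by the tree fact, and $P^*$ is disjoint from $\zeta^+(T)\supseteq E_1^*\cup E_2^*\cup F^*$ because $P\subseteq G\setminus\zeta^+(T)$; the same facts together with $P^*$ being anticomplete to $\zeta^+(T)$ give pairwise internal anticompleteness. Each path is induced: $E_1,E_2\in\{S_1,S_2,Q_1,Q_2\}$ as noted, and $F\cup P$ because $x'$ (respectively $x$) attaches to $\zeta^+(T)$ only within $\zeta(v',e_1',e_2')$ (respectively $\zeta(v,e_1,e_2)$), $a$ attaches to $\zeta(T)$ only at leaf-adhesions by (S\ref{s8}) (and the relevant $b_i$ lies in $\zeta(e_i,v)\setminus\zeta(e_i,u_i)$ since the rung is long, so $a\not\sim b_i$), and $P^*$ is anticomplete to $\zeta^+(T)$. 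Each path has length at least two. This theta contradicts that $G$ is theta-free, proving the lemma.

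The step I expect to be the main obstacle is the bookkeeping of the second paragraph: choosing, for the jewel $x$, two escape paths that are simultaneously independent of each other and clear of the entire $v'$-side of the decomposition. The only delicate point is that $Q_i$ re-enters the adhesion $\zeta(e_i,v)$ at $b_i$, which is adjacent to $c_i$ on $S_i$, so only three of the pairs in $\{S_1,S_2,Q_1,Q_2\}$ are admissible and the choice must be matched to the position of the first edge $e^*$ of the $v$-to-$v'$ path of $T$; the remaining verifications are a direct adaptation of the arguments in the proofs of Lemma~\ref{major} and Theorem~\ref{wherejewelsattach}.
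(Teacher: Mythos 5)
The proposal is correct in its essential strategy and arrives at essentially the same contradiction as the paper: assuming $v\not\sim v'$, both arguments build a theta with one end $a$ and the other end one of the two jewels, using two ``escape paths'' inside $\zeta^+(T)$ from $a$ to that jewel, and a third path that reaches the other jewel inside $\zeta^+(T)$ and then continues along $P$. The differences are presentational. You route the two escape paths through $x$ and the third through $x'$; the paper does the mirror image (ends $a,x'$, with the path $a\dd Q\dd x\dd P\dd x'$). More substantively, you build all escape paths explicitly from the $\zeta$-pyramid data underlying the jewels (via $S_i$ and $Q_i$), which forces the case analysis on where $e^*$ sits among $e_1,e_2,e_3$ (and $f^*$ among $e_1',e_2',e_3'$); the paper instead chooses the seagulls with the WLOG assumption $u_2,u_2'\notin\Lambda$ and then invokes (S\ref{s5}) and (S\ref{s7}) to obtain the paths $Q,P',Q'$ abstractly, which keeps the bookkeeping lighter. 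One caveat on your claim that $\zeta(v,e_1,e_2)$ and $\zeta(v',e_1',e_2')$ are disjoint ``since all four edges are distinct and the relevant bags are pairwise disjoint'': this isn't a pure bag-disjointness issue, because the terms $\zeta_{e_i}(u_i)$ and $\zeta_{e_j'}(u_j')$ can live in the \emph{same} vertex bag $\zeta(w)$ when $\Lambda$ has length two and $u_i=u_j'=w$ (which happens exactly when $e_i=e^*$ and $e_j'=f^*$). Disjointness still holds there, but you need (S\ref{s1}) (to get $\zeta(e^*,w)\cap\zeta(f^*,w)=\emptyset$) together with (S\ref{s7}) (so that no component of $\zeta(w)$ has empty attachment into $B(w)$) to conclude $\zeta_{e^*}(w)\cap\zeta_{f^*}(w)=\emptyset$; this is worth spelling out. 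Modulo that clarification and the admittedly heavier case-checking, your argument is sound.
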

\begin{proof}
    Suppose not. Note that by Theorem~\ref{wherejewelsattach}, $x$ and $x'$ are distinct. Let $\Lambda$ be the path in $T$ from $v$ to $v'$.  Then $\Lambda$ has length more than one, and so there are two  distinct edges $f,f'\in E(\Lambda)$ such that $f$ is incident with $v$ and $f'$ is incident with $v'$. Let $u$ be the end of $f$ distinct from $v$ and $u'$ be the end of $f'$ distinct from $v'$. Let $(v,e_1,e_2)$ and $(v',e_1',e_2')$ be two seagulls in $G$ such that $x\in \mathcal{J}_{\zeta,(v,e_1,e_2)}$ and $x'\in \mathcal{J}_{\zeta,(v',e_1',e_2')}$. For each $i\in \{1,2\}$, let $u_i$ be the end of $e_i$ distinct from $v$ and let $u_i'$ be the end of $e_i'$ distinct from $v'$. Without loss of generality, we may assume that $u_2,u_2'\notin \Lambda$. Let $T_2$ be the component of $T\setminus (N_T(v)\setminus \{u_2\})$ containing $v$ and let $T_2'$ be the component of $T\setminus (N_T(v')\setminus \{u_2'\})$ containing $v'$. Let $T'$ be the component of $T\setminus \{u',u_2'\}$ containing $v'$.  Since $x$ is a jewel for $\zeta$ at $(v,e_1,e_2)$, it follows that $x$ is not adjacent to $a$, and $x$ has a neighbor $c\in \zeta(e_2)\setminus \zeta(e_2,v)\subseteq \zeta(T_2)\setminus (B(v)\cup \zeta(u_2)\cup \zeta(v))$. Therefore, there exists a path $Q$ of length more than one in $G$ from $a$ to $x$ for which we have $c\in Q^*\subseteq \zeta(T_2)\setminus (B(v)\cup \zeta(u_2)\cup \zeta(v))$. Also, since $x'$ is a jewel for $\zeta$ at $(v',e'_1,e'_2)$, it follows that $x'$ is not adjacent to $a$, and $x'$ has a neighbor $b'\in B(v')\setminus (\zeta(f',u')\cup \zeta(e_2',v'))$ and a neighbor $c'\in \zeta(e_2')\setminus \zeta(e_2',v')\subseteq \zeta(T_2')\setminus (B(v')\cup \zeta(u_2')\cup \zeta(v'))$. Therefore, there exist paths $P',Q'$ of length more than one in $G$ from $a$ to $x'$ for which we have $b'\in P'^*\subseteq (\zeta(T')\setminus \zeta(v'))\cup (\zeta(f',v')\setminus \zeta(f',u'))$ and $c'\in Q'^*\subseteq \zeta(T_2')\setminus (B(v')\cup \zeta(u_2)\cup \zeta(v'))$. But now $G$ contains a theta with ends $a,x'$ and paths $a\dd P'\dd x'$, $a\dd Q'\dd x'$, and $a\dd Q\dd x\dd P\dd x'$, a contradiction. This proves Lemma~\ref{distantjewels}.
\end{proof}

\begin{theorem}\label{antijewelseparation}
Let $t,\delta\geq 1$ be integers and let $j(t,\delta)$ be as in Theorem~\ref{fewjewels}. Let $G\in \mathcal{C}_t$ be a graph and let $a\in V(G)$. Let $T$ be a smooth tree of maximum degree $\delta$ and let $v\in V(T)$. Let $\zeta$ be a rich $(T,a)$-strip-structure in $G$ such that $G\setminus (\zeta^+(T)\cup \mathcal{J}_{\zeta})$ is anticomplete to $\zeta^+(T)$. Let $x\in G\setminus (\zeta^+(T)\cup \mathcal{J}_{\zeta})$. Then there exists $S_x\subseteq G\setminus (\zeta^+(T)\cup \{x\})$ such that $|S_x|< 2j(t,\delta)$ and $S_x$ separates $x$ and $\mathcal{J}_{\zeta}\setminus (\{x\}\cup S_x)$ in $G\setminus \zeta^+(T)$. Consequently, $S_x$ separates $x$ and $\zeta^+(T)$ in $G$.
\end{theorem}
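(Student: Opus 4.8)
The plan is to build $S_x$ out of two ``layers'' around $x$ inside $G \setminus \zeta^+(T)$, exploiting the fact that every jewel attaches to $\zeta^+(T)$ only in a very restricted way. Let $M = G \setminus \zeta^+(T)$ and let $D$ be the component of $M$ containing $x$. First I would observe that since $G \setminus (\zeta^+(T) \cup \mathcal{J}_\zeta)$ is anticomplete to $\zeta^+(T)$, every vertex of $D$ that has a neighbor in $\zeta^+(T)$ lies in $\mathcal{J}_\zeta$; in particular any jewel outside $D$ is separated from $x$ already by $\emptyset$ within $G \setminus \zeta^+(T)$ unless it lies in $D$. So it suffices to separate $x$ from the jewels that lie in $D$. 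Now consider the set $J_x = \mathcal{J}_\zeta \cap D$ of jewels in the component of $x$. By Lemma~\ref{distantjewels}, whenever two jewels $y \in \mathcal{J}_{\zeta,v}$ and $y' \in \mathcal{J}_{\zeta,v'}$ are joined by a path in $M$ whose interior is anticomplete to $\zeta^+(T)$ — which happens exactly for paths inside $D$ whose interior misses $\mathcal{J}_\zeta$, but more usefully one can route through intermediate jewels — the vertices $v, v'$ are adjacent in $T$. The point is that the ``jewel-reachability'' relation forces the associated tree-vertices to form a connected subset of $T$, and since $T$ has maximum degree $\delta$ and there are at most $j(t,\delta)$ jewels at each vertex (Theorem~\ref{fewjewels}), only boundedly many vertices of $T$ are relevant near $x$.

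More concretely, here is the separator I would take. Walk outward from $x$ in $D$: let $A_1$ be the set of jewels in $D$ that are reachable from $x$ by a path in $D$ with interior disjoint from $\mathcal{J}_\zeta$ (``the first jewels you hit''); these all attach to $\zeta^+(T)$, and by Theorem~\ref{wherejewelsattach} they attach only inside $\zeta(v,e_1,e_2)$ for appropriate seagulls at branch vertices $v$. I claim $|A_1|$ is already bounded: if $A_1$ contained jewels $y,y'$ at two tree-vertices $v \ne v'$, then concatenating a path from $y$ to $x$ and from $x$ to $y'$ (both with interior off $\mathcal{J}_\zeta$) gives a path in $M$ from $y$ to $y'$ whose interior is anticomplete to $\zeta^+(T)$ (the interior meets $\mathcal{J}_\zeta$ only possibly at $x$, and $x \notin \mathcal{J}_\zeta$, so in fact the interior is anticomplete to $\zeta^+(T)$), whence by Lemma~\ref{distantjewels} $v$ and $v'$ are adjacent in $T$. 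Thus the tree-vertices hosting jewels of $A_1$ are pairwise adjacent in $T$, hence (as $T$ is a tree) there are at most two of them, and by Theorem~\ref{fewjewels} $|A_1| < 2j(t,\delta)$. Set $S_x = A_1$. Every path in $G \setminus \zeta^+(T)$ from $x$ to a jewel not in $S_x$ must use a vertex of $\mathcal{J}_\zeta$ other than its endpoint, and the first such vertex encountered lies in $A_1$ by definition of $A_1$; hence $S_x$ separates $x$ from $\mathcal{J}_\zeta \setminus (\{x\} \cup S_x)$ in $G \setminus \zeta^+(T)$. Finally, since the only vertices of $M$ with a neighbor in $\zeta^+(T)$ are jewels, removing $S_x$ from $G \setminus \zeta^+(T)$ leaves $x$ in a component all of whose vertices are non-jewels, hence anticomplete to $\zeta^+(T)$; therefore $S_x$ separates $x$ from $\zeta^+(T)$ in $G$, giving the ``Consequently'' clause.

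The main obstacle I anticipate is making the ``first jewels you hit'' argument airtight: one must verify that distinct jewels in $A_1$ really do get connected by a path in $M$ whose interior is \emph{anticomplete} to $\zeta^+(T)$ — not merely disjoint from $\zeta^+(T)$ — so that Lemma~\ref{distantjewels} applies, and one must handle the possibility that a single path from $x$ passes through several jewels (the minimality/first-hit choice is what keeps $S_x$ at the first layer). A secondary point is the bookkeeping: a jewel $y \in \mathcal{J}_{\zeta,v}$ may a priori belong to $\mathcal{J}_{\zeta,v}$ for only one $v$ (this is guaranteed by the last clause of the first bullet of Theorem~\ref{wherejewelsattach}, $\mathcal{J}_{\zeta,v} \cap \mathcal{J}_{\zeta,v'} = \emptyset$ for $v \ne v'$), so the count ``at most two tree-vertices, at most $j(t,\delta)$ jewels each'' is legitimate and yields $|S_x| < 2j(t,\delta)$ as required.
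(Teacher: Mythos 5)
Your proof is correct, but it takes a genuinely different route from the paper's. The paper builds an auxiliary graph $G'$ from $G\setminus \zeta^+(T)$ by contracting each $\mathcal{J}_{\zeta,v}$ to a single vertex $z_v$, adds a new vertex $z$ adjacent to all the $z_v$'s, and invokes Menger's theorem: if no $2$-vertex set separates $x$ from $z$ in $G'$, then three internally disjoint $x$--$z$ paths exist, which lift back to three jewels at distinct tree-vertices pairwise joined (via $x$) by paths with interior avoiding $\mathcal{J}_\zeta$, contradicting Lemma~\ref{distantjewels} since a tree has no triangle. Your approach skips the auxiliary graph and Menger entirely: you take $S_x$ to be the first ``layer'' of jewels reachable from $x$ by paths in $G\setminus \zeta^+(T)$ with jewel-free interior, and you bound $|S_x|$ directly by the same pairwise-adjacency-in-a-tree observation. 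Both arguments hinge on the same pair of facts (Lemma~\ref{distantjewels} plus Theorem~\ref{fewjewels}, with the disjointness $\mathcal{J}_{\zeta,v}\cap \mathcal{J}_{\zeta,v'}=\emptyset$ from Theorem~\ref{wherejewelsattach}), and both yield $|S_x|<2j(t,\delta)$; yours is a bit more elementary since it avoids the contracted graph and Menger, while the paper's is slightly more modular. Two points worth tightening in your write-up, both of which you already flagged: (i) the concatenation of the two jewel-free-interior paths through $x$ need not itself be a path in the paper's induced-subgraph sense, so one should pass to a shortest (hence induced) $y$--$y'$ path inside the union of their vertex sets and check its interior still avoids $\mathcal{J}_\zeta$ (it does, since the only extra vertex available is $x\notin\mathcal{J}_\zeta$), which by the anticompleteness hypothesis makes the interior anticomplete to $\zeta^+(T)$; and (ii) the fact that the interior is \emph{anticomplete}, not merely disjoint, follows immediately from the theorem's hypothesis that $G\setminus(\zeta^+(T)\cup\mathcal{J}_\zeta)$ is anticomplete to $\zeta^+(T)$.
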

\begin{proof}
By Theorem~\ref{wherejewelsattach}, $\{\mathcal{J}_{\zeta,v}:v\in V(T)\}$ is a partition of $\mathcal{J}_{\zeta}$. Let $G'$ be the graph obtained from $G\setminus \zeta^+(T)$ by contracting the set $\mathcal{J}_{\zeta,v}$ into a vertex $z_{v}$ for each $v\in V(T)$ with $\mathcal{J}_{\zeta,v}\neq \emptyset$,  and then adding a new vertex $z$ such that $N_{G'}(z)=\{z_{v}: v\in V(T), \mathcal{J}_{\zeta,v}\neq \emptyset\}$. We claim that there is a set $Y\subseteq G'\setminus \{x,z\}$ of cardinality at most two which separates $x$ and $z$ in $G'$. Suppose not. By Theorem~\ref{Menger}, there are three pairwise internally disjoint paths in $G'$ from $x$ to $z$. Thus, there exist $S\subseteq T$ with $|S|=3$ as well as three paths $\{P_{v}:v\in S\}$ in $G\setminus \zeta^+(T)$ all having $x$ as an end and otherwise disjoint, such that for each $v\in S$, $P_{v}$ has an end $y_{v}\in \mathcal{J}_{\zeta,v}$ distinct from $x$, and we have $P_{v}^*\subseteq G\setminus (\zeta^+(T)\cup \mathcal{J}_{\zeta})$. As a result, for all distinct $v,v'\in S$, $P_{v,v'}=y_{v}\dd P_{v}\dd x\dd P_{v'}\dd y_{v'}$ is a path in $G\setminus \zeta^+(T)$ from $y_{v}\in \mathcal{J}_{\zeta,v}$ to $y_{v'}\in \mathcal{J}_{\zeta,v'}$ such that $P_{v,v'}^*\subseteq G\setminus (\zeta^+(T)\cup \mathcal{J}_{\zeta})$. In particular, $P_{v,v'}^*$ is anticomplete to $\zeta^+(T)$. But then by Lemma~\ref{distantjewels}, $S$ is a clique in $T$, which is impossible. The claim follows.

Let $Y$ be as in the above claim. For each $y\in Y$, if $y=z_{v}$ for some $v\in V(T)$, then let $A_y=\mathcal{J}_{\zeta,v}$. Otherwise, let $A_y=\{y\}$. Let $S_x=\bigcup_{y\in Y}A_y$. Then $S_x\subseteq G\setminus (\zeta^+(T)\cup \{x\})$ separates $x$ and $\mathcal{J}_{\zeta}\setminus (\{x\}\cup S_x)$ in $G\setminus \zeta^+(T)$.  Also, by Theorem~\ref{fewjewels}, we have $|S_x|< 2j(t,\delta)$. This completes the proof of Theorem~\ref{antijewelseparation}.
\end{proof}

\section{Strip structures and connectivity}\label{sec:stripconnectivity}
In this section, we investigate the connectivity implications of the presence of certain $(T,a)$-strip-structures in graphs from $\mathcal{C}_t$. The main result is the following.
\begin{theorem}\label{mainjewelconnectivity}
    For all integers $t,\delta\geq 1$, there exists an integer $\sigma=\sigma(t,\delta)\geq 1$ with the following property. Let $G\in \mathcal{C}_t$ be a graph and let $a\in V(G)$. Let $T$ be a smooth tree of maximum degree $\delta$ and let $\zeta $ be a rich $(T,a)$-strip-structure in $G$ such that $G\setminus (\zeta^+(T)\cup \mathcal{J}_{\zeta})$ is anticomplete to $\zeta^+(T)$. Then for every vertex $x\in G\setminus N_G[a]$, there exists a set $S_x\subseteq G\setminus \{a,x\}$ with $|S_x|<\sigma$ such that $S$ separates $a$ and $x$ in $G$.
    \end{theorem}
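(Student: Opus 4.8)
The plan is to build the separator $S_x$ from three pieces: a bounded ``wall'' inside $\zeta^+(T)$ enclosing a small region of the line-graph-like part around $x$, all the (boundedly many) jewels attached near that region, and, where applicable, the separators supplied by Theorem~\ref{antijewelseparation}. Set $j=j(t,\delta)$ as in Theorem~\ref{fewjewels} and let $\sigma=\sigma(t,\delta)$ be a suitable function of $t,\delta,j$ (of order $\delta t+\delta j$; the exact value emerges from the construction). We may assume $G$ is connected, and we are given $x\neq a$ and $x\notin N_G(a)$. First, if $x\notin\zeta^+(T)\cup\mathcal{J}_\zeta$, then Theorem~\ref{antijewelseparation} directly gives $S_x\subseteq G\setminus(\zeta^+(T)\cup\{x\})$ with $|S_x|<2j<\sigma$ that separates $x$ from $\zeta^+(T)$ in $G$; since $a\in\zeta^+(T)\setminus S_x$, this $S_x$ separates $x$ from $a$. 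So from now on $x\in\zeta^+(T)\cup\mathcal{J}_\zeta$.

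I assign to $x$ a base point which is either an edge or a vertex of $T$: if $x\in\zeta^{\circ}(e)$ for some edge $e$, the base point is $e$; otherwise ($x\in B_\zeta(v)\cup\zeta(v)$ for some $v$, or $x$ a jewel at $v$, which is then unique by Theorem~\ref{wherejewelsattach}) the base point is that vertex $v_0$. Using $x\notin N_G[a]$, the richness of $\zeta$, and the strip-structure axioms, one checks that in the vertex case the base point is not a leaf of $T$, and that in the edge case, if the base point $e$ is a leaf-edge with leaf end $l$, then $\zeta(e,l)$ is the single vertex $a'\in N_G(a)$ and $x\neq a'$ (and in the vertex case, re-choosing among equivalent base points or treating a degenerate situation directly, one also arranges $x\notin W$ below). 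Now set, in the edge case $e=uv$, $W=\zeta(e,u)\cup\zeta(e,v)$ and $C_0=\zeta^{\circ}(e)$; and in the vertex case, writing $f_1,\dots,f_k$ ($k\le\delta$) for the edges of $T$ at $v_0$ with $f_i=v_0w_i$, set $W=\bigcup_{i=1}^{k}\zeta(f_i,w_i)$ and $C_0=\bigl(\zeta(v_0)\cup\bigcup_{i=1}^{k}(\zeta(f_i)\cup\zeta_{f_i}(w_i))\bigr)\setminus W$. A routine verification with the strip-structure axioms gives: $W$ separates $C_0$ from $\zeta^+(T)\setminus(C_0\cup W)$ inside $\zeta^+(T)$; $N_{\zeta^+(T)}(x)\subseteq C_0\cup W$ (with $x\in C_0$ in the vertex/edge case, and in the jewel case using $N_{\zeta^+(T)}(x)\subseteq\zeta(v_0,e_1,e_2)$ from Theorem~\ref{wherejewelsattach}); $a\notin C_0\cup W$; and every jewel with a neighbor in $C_0$ lies in $\mathcal{J}_{\zeta,w}$ for some $w\in N_T[v_0]$ (resp. $w\in\{u,v\}$), again by Theorem~\ref{wherejewelsattach}.

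Put $S_x=W\cup\bigl(\bigcup_{w\in N_T[v_0]}\mathcal{J}_{\zeta,w}\bigr)\setminus\{x\}$ in the vertex/jewel case, and $S_x=W\cup\mathcal{J}_{\zeta,u}\cup\mathcal{J}_{\zeta,v}$ in the edge case (recall $\mathcal{J}_{\zeta,l}=\emptyset$ for leaves $l$). Let $Z$ be the component of $x$ in $G\setminus S_x$. One shows $Z\cap\mathcal{J}_\zeta\subseteq\{x\}$: if $p'\in Z\cap\mathcal{J}_\zeta$ with $p'\neq x$, take a shortest path $Q$ in $Z$ from $x$ to a jewel distinct from $x$; then $Q$ has no internal jewel, and since $G\setminus(\zeta^+(T)\cup\mathcal{J}_\zeta)$ is anticomplete to $\zeta^+(T)$, $Q$ cannot pass between $\zeta^+(T)$ and $G\setminus\zeta^+(T)$ except at $x$ (when $x$ is a jewel); hence either $Q\setminus\{p'\}\subseteq\zeta^+(T)\setminus W$, which by the separation property keeps it in $C_0$, forcing $p'$ to have a neighbor in $C_0$ and thus $p'\in S_x$, a contradiction; or $Q$ runs inside $G\setminus\zeta^+(T)$ from one jewel to another with interior anticomplete to $\zeta^+(T)$, so by Lemma~\ref{distantjewels} these two jewels lie at $T$-adjacent (or equal) vertices, again giving $p'\in S_x$, a contradiction. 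An analogous argument gives $Z\cap\zeta^+(T)\subseteq C_0$. Since $a\in\zeta^+(T)\setminus C_0$, we conclude $a\notin Z$; moreover $a\notin S_x$ and $x\notin S_x$, and $|S_x|<|W|+(\delta+1)j$ by Theorem~\ref{fewjewels}. So it remains only to bound $|W|$ by a function of $t,\delta$.

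The main obstacle is exactly this last bound. Each attachment set $\zeta(f,v)$ appearing in $W$ has size less than $t$ whenever it is a clique, since $G$ has no $K_t$; and by Lemma~\ref{B(v)almostclique} at most one attachment set at each vertex of $T$ fails to be a clique, so at most $\delta$ (at most $2$ in the edge case) of the sets forming $W$ could be large. Handling these is the crux: I expect to prove, using that $\zeta$ is substantial and that $G$ is (theta, prism)-free, that every strip $\zeta(f)$ admits a separator of size bounded in terms of $t$ between $\zeta(f,u)$ and $\zeta(f,v)$ inside $\zeta(f)$, and then to substitute that separator into $W$ in place of a non-clique attachment set (shrinking $C_0$ to the side of it containing $\zeta(f,v_0)$; this leaves all of the checks above intact, since the jewels attached to a strip still live only at its two endpoints). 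With this, $|W|$ becomes bounded in terms of $t$ and $\delta$, and the proof is complete. As a fallback, should this in-strip separator statement prove awkward, one can argue by contradiction via Menger's theorem (Theorem~\ref{Menger}): assuming $\sigma$ pairwise internally disjoint $x$--$a$ paths, one tracks how they traverse the strips and the jewels and, using that $G\in\mathcal C_t$ is (theta, prism)-free, extracts a $K_t$, a theta, or a prism.
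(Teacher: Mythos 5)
Your high-level plan matches the paper's: split according to whether $x$ lies in $\zeta^+(T)$, in $\mathcal{J}_\zeta$, or outside both; handle the last case with Theorem~\ref{antijewelseparation}; in the remaining cases build a separator from a ``wall'' inside the line-graph-like part plus the (few) jewels at nearby tree vertices, using Theorem~\ref{fewjewels} and Lemma~\ref{distantjewels} to keep the jewel part bounded. The separation argument you sketch for your wall $W$ (union of attachment sets at the far ends of the edges at the base vertex or edge) is also essentially sound.

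The genuine gap is the one you flag yourself: $|W|$ is not bounded, because an attachment set $\zeta(f,v)$ need not be a clique and can be arbitrarily large. Your proposed fix is to prove an ``in-strip separator'' lemma bounding, in terms of $t$, the size of a separator between $\zeta(f,u)$ and $\zeta(f,v)$ inside $\zeta(f)$; but this lemma is not proved, it is not a statement the paper establishes, and its truth is not at all clear (nothing in the strip-structure axioms or in $(\text{theta},\text{prism})$-freeness obviously caps the vertex connectivity \emph{within} a strip between its two ends). The paper avoids this entirely: instead of taking the full attachment set at each far end $w$, it takes a \emph{maximal clique} $K_w\subseteq B(w)$, which has size less than $t$ because $G$ is $K_t$-free. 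By Lemma~\ref{B(v)almostclique}, $K_w$ contains all but at most one of the attachment sets $\zeta(g,w)$ at $w$; since any path that traverses the cluster $B(w)\cup\zeta(w)$ must enter and leave through two \emph{different} attachment sets, it necessarily meets $K_w$. Thus $\bigcup_{w\in N_T(v)}K_w$ (respectively $\bigcup_{w\in N_T(\{u,v\})}K_w$ together with the singleton leaf-attachments $C_u,C_v$) already functions as the wall and has size less than $\delta t$ (respectively $2\delta t$), no in-strip separator needed. This is not a drop-in replacement for your $W$ -- $K_w$ does not in general \emph{contain} $\zeta(f,w)$ -- so the separation argument changes shape: it becomes ``every $a$--$x$ path inside $\zeta^+(T)$ must transit some far cluster and therefore hit the corresponding clique,'' rather than ``$W$ is the literal boundary of $C_0$.'' Without this clique trick (or a proof of your in-strip lemma), the argument as written does not yield a bounded separator, and your Menger fallback is too vague to close the gap.
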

\begin{proof}
   Let $j(t,\delta)$ be as in Theorem~\ref{fewjewels}. We claim that 
   $$\sigma=\sigma(t,\delta)=2\delta (j(t,\delta)+ t)$$
   satisfies Theorem~\ref{mainjewelconnectivity}. For every vertex $v\in V(T)$, we define $C_v=B(v)$ if $v$ is a leaf of $T$  and $C_v=\emptyset$ otherwise. Also, for every vertex $v\in V(T)$, let $K_v$ be a maximal clique of $G$ contained in $B(v)$. Thus, we have $|K_v|<t$. Moreover, Lemma~\ref{B(v)almostclique} along with the assumption that $\zeta$ is rich implies that if $v$ is a leaf of $T$, then we have $K_v=B(v)=C_v$  (and so $|K_v|=1$), and if $v$ is a branch vertex of $T$, then $K_v$ contains all but possibly one of the sets $\zeta(f,v)$ for $f\in E(T)$. For every $S\subseteq T$, we define
   $$\mathcal{M}_S=\bigcup_{w\in N_T(S)}\mathcal{J}_{\zeta,w},$$
$$\mathcal{N}_S=\bigcup_{w\in N_T(S)}K_w.$$
Also, we write $\mathcal{M}_v$ for $\mathcal{M}_{\{v\}}$ and $\mathcal{N}_v$ for $\mathcal{N}_{\{v\}}$. For every $v\in V(T)$, let $\mathcal{O}_v=\mathcal{M}_v\cup \mathcal{N}_v$.  The following is immediate from Theorems~\ref{wherejewelsattach} and \ref{fewjewels} and Lemma~\ref{distantjewels}.

\sta{\label{jewelsep}For every $v\in V(T)$, we have
\begin{itemize}
    \item $\mathcal{O}_v\subseteq G\setminus (\mathcal{J}_{\zeta,v}\cup \{a\})$;
    \item $|\mathcal{O}_v|< \delta(j(t,\delta)+t)\leq \sigma$; and
    \item $\mathcal{O}_v$ separates $a$ and $\mathcal{J}_{\zeta,v}$ in $G$.
\end{itemize}}
   
 Now, for every $x\in G\setminus N_G[a]$, we define $S_x$ as follows. First, assume that $x\in \zeta(T)\setminus N_G[a]$. Then either $x\in \zeta(e)$ for some edge $e=uv\in E(T)$, or $x\in \zeta(v)$ for some branch vertex $v\in V(T)$. In the former case, let
$$\mathcal{E}_x=\mathcal{M}_u\cup \mathcal{M}_v,$$
$$\mathcal{I}_x=\mathcal{N}_{\{u,v\}}\cup C_u\cup C_v.$$
In the latter case, let
$$\mathcal{E}_x=\mathcal{M}_v\cup \mathcal{J}_{\zeta,v}$$
$$\mathcal{I}_x=\mathcal{N}_v.$$
Let $S_x=\mathcal{E}_x\cup \mathcal{I}_x$. Observe that since $x\in G\setminus N_G[a]$, we have $S_x\subseteq G\setminus \{a,x\}$. Also,  by Theorem~\ref{fewjewels}, we have $|\mathcal{E}_x|\leq 2\delta j(t,\delta)$ and so $|S_x|<2\delta (j(t,\delta)+ t)=\sigma$. Moreover, from Theorem~\ref{wherejewelsattach} and the fact that $\zeta$ is rich, it is easy to check that for every path $P$ in $G$ from $a$ to $x$, if $P\subseteq \zeta^+(T)$, then $P$ contains a vertex from $\mathcal{I}_x$, and otherwise $P$ contains a vertex from either $\mathcal{I}_x$ or $\mathcal{E}_x$. Therefore, $S_x$ separates $a$ and $x$ in $G$.

Next, assume that $x\in \mathcal{J}_{\zeta}$. Then  by Theorem~\ref{wherejewelsattach}, there exists a unique vertex $v\in V(T)$ such that $x\in \mathcal{J}_{\zeta,v}$. Let $S_x=\mathcal{O}_v$. Then by \eqref{jewelsep}, we have $S_x\subseteq G\setminus \{a,x\}$, $|S_x|<\sigma$ and $S_x$ separates $a$ and $x$ in $G$. 

Finally, assume that $x\in G\setminus (\zeta^+(T)\cup \mathcal{J}_{\zeta})$. Then letting $S_x$ to be as in Theorem~\ref{antijewelseparation}, it follows from Theorem~\ref{antijewelseparation} that $S_x\subseteq G\setminus \{a,x\}$, $|X|< 2j(t,\delta)\leq \sigma$ and $S_x$ separates $a$ and $x$ in $G$. This completes the proof of Theorem~\ref{mainjewelconnectivity}.
\end{proof}

Our application of Theorem~\ref{mainjewelconnectivity} is confined to the case where $T$ is a caterpillar. More precisely, for a graph $G$ and a vertex $a \in G$, an induced subgraph $H \subseteq G\setminus \{a\}$ is said to be an
{\em $a$-seed} in $G$ if the following hold.
\begin{itemize}
    \item There exists a caterpillar $C$ such that $H$ is the line graph of a $1$-subdivision of $C$ and $N_G(a)=\mathcal{Z}(H)$.
    \item The vertex $a$ is trapped in $H\cup \{a\}$.
\end{itemize}
It follows that $\mathcal{Z}(H)$ is the set of all degree-one vertices of $H$. We now combine Theorems~\ref{stripstructure} and \ref{mainjewelconnectivity} to deduce the following.
\begin{theorem} \label{noseed}
  For every integer $t\geq 1$, there exists an integer $s=s(t)\geq 1$ with the following property. Let $G \in \mathcal{C}_t$ be a graph and let
  $a \in V(G)$. Assume that there is an $a$-seed in $G$. Then for every vertex $x\in G \setminus N_G[a]$, there exists $S_x \subseteq G\setminus \{a,x\}$
  with $|S_x|< s$ such that $S_x$ separates $a$ and $x$ in $G$.
  \end{theorem}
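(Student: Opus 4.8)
The plan is to recognize an $a$-seed as (the vertex set of) a tame, substantial, rich $(T,a)$-strip-structure, apply Theorem~\ref{stripstructure} to replace it by a substantial, rich strip-structure $\zeta$ whose ``outside'' is anticomplete to $\zeta^+(T)$ except for jewels, and then quote Theorem~\ref{mainjewelconnectivity}. Accordingly, with $\sigma$ as in Theorem~\ref{mainjewelconnectivity}, I would prove Theorem~\ref{noseed} with $s(t)=\sigma(t,3)$.

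So let $H\subseteq G\setminus\{a\}$ be an $a$-seed, witnessed by a caterpillar $C$ with $H=L(C')$, where $C'$ is the $1$-subdivision of $C$, $N_G(a)=\mathcal{Z}(H)$, and $a$ is trapped in $H\cup\{a\}$. If $C$ has no branch vertex, then $H$ is the line graph of a path, hence a path, so $N_G(a)=\mathcal{Z}(H)$ has at most two vertices, and for any $x\in G\setminus N_G[a]$ the set $S_x:=N_G(a)$ separates $a$ and $x$ in $G$ and satisfies $|S_x|\le 2<s(t)$; so I would assume from now on that $C$ has a branch vertex. Let $T$ be obtained from $C$ by suppressing every vertex of degree two. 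Then $T$ is a smooth tree (it has at least four vertices, and every vertex of $T$ has degree $1$ or $3$), and in particular $T$ has maximum degree at most $3$. The edges of $T$ correspond bijectively to the maximal paths of $C$ whose ends are leaves or branch vertices of $C$ and whose interiors consist of degree-two vertices of $C$; for $e\in E(T)$ write $\pi_e$ for this path and $\pi'_e$ for the corresponding path of $C'$.

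Next I would set up the candidate strip-structure $\eta$: put $\eta(v)=\emptyset$ for every $v\in V(T)$; for $e\in E(T)$ let $\eta(e)$ be the set of vertices of $H$ corresponding to the edges of $\pi'_e$, so that $\eta(e)$ is a path in $H$ (isomorphic to $L(\pi'_e)$) with the two vertices corresponding to the two end-edges of $\pi'_e$ as its ends; for an end $v$ of $e$ let $\eta(e,v)$ be the singleton consisting of the vertex corresponding to the edge of $\pi'_e$ at $v$, and $\eta(e,v)=\emptyset$ when $e$ is not incident with $v$. Since $\{\pi_e:e\in E(T)\}$ partitions $E(C)$, $\{\pi'_e:e\in E(T)\}$ partitions $E(C')=V(H)$, so $\eta(T)=H$ and $\eta^+(T)=H\cup\{a\}$. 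I would then verify conditions (S\ref{s1})--(S\ref{s8}): (S\ref{s1}) is the edge-disjointness of the $\pi'_e$; (S\ref{s2}), (S\ref{s6}) and (S\ref{s7}) hold vacuously because every $\eta(v)$ is empty; (S\ref{s3}) is immediate; (S\ref{s4}) holds because two edges lying on distinct $\pi'_e,\pi'_f$ are adjacent in $H$ only when they share an endpoint of $C'$ that is a common end of $e$ and $f$ in $T$, and that endpoint is then exactly the one recorded by $\eta(e,v)$ and $\eta(f,v)$; (S\ref{s5}) holds because $\eta(e)$ is literally a path in $H$ from $\eta(e,u)$ to $\eta(e,v)$; and (S\ref{s8}) holds because the leaves of $C'$ are precisely the leaves of $C$, so $\mathcal{Z}(H)$ (the vertices of $H$ corresponding to the leaf-edges of $C'$) equals the union of the singletons $\eta(e_l,l)$ over all leaves $l$ of $T$ (with $e_l$ the leaf-edge at $l$), which together with $N_G(a)=\mathcal{Z}(H)$ gives both parts of (S\ref{s8}). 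Finally $\eta$ is \emph{tame} (every $\eta(v)$ is empty and every $\eta(e)$ is itself an $\eta(e)$-rung, so $\tilde\eta(e)=\emptyset$), \emph{substantial} (each $\pi'_e$ has length at least $2$, hence $\eta(e)$ is a long rung and $\eta(e,u)\ne\eta(e,v)$), and \emph{rich} ($a$ is trapped in $\eta^+(T)=H\cup\{a\}$ by the definition of an $a$-seed, and $|\eta(e_l,l)|=1$ for each leaf-edge $e_l$).

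With this in hand, Theorem~\ref{stripstructure} applied to $G,a,T,\eta$ yields a substantial and rich $(T,a)$-strip-structure $\zeta$ in $G$ with $G\setminus(\zeta^+(T)\cup\mathcal{J}_\zeta)$ anticomplete to $\zeta^+(T)$. Since $G\in\mathcal{C}_t$ and $T$ is a smooth tree of maximum degree at most $3$, Theorem~\ref{mainjewelconnectivity} applies with $\delta=3$ and gives, for every $x\in G\setminus N_G[a]$, a set $S_x\subseteq G\setminus\{a,x\}$ with $|S_x|<\sigma(t,3)$ separating $a$ and $x$ in $G$; taking $s(t)=\sigma(t,3)$ completes the proof. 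I expect the middle step to be the only point requiring genuine care: one must match the line graph of a $1$-subdivided caterpillar against the eight strip-structure axioms and against tameness, substantiality, and richness. This is a direct, if somewhat tedious, check with no real surprises; everything before and after it is either a trivial case analysis or a black-box invocation of Theorems~\ref{stripstructure} and~\ref{mainjewelconnectivity}.
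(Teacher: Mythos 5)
Your proof is correct and follows the same approach as the paper's: you recognize the $a$-seed as the underlying set of a tame, substantial, rich $(T,a)$-strip-structure, invoke Theorem~\ref{stripstructure} to pass to a $\zeta$ whose complement is anticomplete except for jewels, and then apply Theorem~\ref{mainjewelconnectivity} with $\delta=3$. You are simply more explicit than the paper (which compresses the construction of $\eta$ to ``one may immediately observe''), and you additionally note the degenerate case where the caterpillar has no branch vertex, which the paper's phrase ``the unique smooth caterpillar with $|N_G(a)|$ leaves'' silently presupposes does not occur.
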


\begin{proof}
Let $\sigma(\cdot,\cdot)$ be as in Theorem~\ref{mainjewelconnectivity}. We show that $s=s(t)=\sigma(t,3)$ satisfies Theorem~\ref{noseed}. Pick an $a$-seed $H$ in $G$. Let $T$ be the unique smooth caterpillar with $|N_G(a)|$ leaves. Then $T$ has maximum degree three. Also, one may immediately observe that there is a tame, substantial, and rich $(T,a)$-strip-structure $\eta$ in $G$ with $\eta(T)=H$. Now we can apply Theorem~\ref{stripstructure} to $G$, $a$, and $T$, deducing that there exists a substantial and rich $(T,a)$-strip-structure $\zeta$ in $G$ such that $G\setminus (\zeta^+(T)\cup \mathcal{J}_{\zeta})$ is anticomplete to $\zeta^+(T)$. Hence, by Theorem~\ref{mainjewelconnectivity} applied to $G$, $a$, $T$, and $\zeta$, for every vertex $x\in G\setminus N_G[a]$, there exists $S_x\subseteq G\setminus \{a,x\}$ with $|S_x|<s$ such that $S_x$ separates $a$ and $x$ in $G$. This completes the proof of Theorem~\ref{noseed}.
  \end{proof}

\section{From blocks to trees}
  \label{getatree}

In this section, we prove Theorem~\ref{mainthm}. We begin with a result which captures the use of Theorem~\ref{noseed} in the proof of  Theorem~\ref{mainthm}. For a positive integer $n$, we write $[n]=\{1,\ldots, n\}$. 
\begin{theorem}\label{banana}
    For all integers $t,\nu\geq 1$, there exists an integer $\psi=\psi(t,\nu)\geq 1$ with the following property. Let $G\in \mathcal{C}_t$, let $a,b\in V(G)$ be distinct and non-adjacent, and let $\mathcal{P}$ be a collection of pairwise internally disjoint paths in $G$ from $a$ to $b$ with $|\mathcal{P}|\geq \psi$. For each $P\in \mathcal{P}$, let $a_{P}$ be the neighbor of $a$ in $P$ (so $a_P\neq b$). Then there exists $P_1,\ldots, P_{\nu}\in \mathcal{P}$ such that:
    \begin{itemize}
    \item $\{a_{P_1},\ldots, a_{P_{\nu}},b\}$ is a stable set in $G$; and
    \item for all $i,j\in [\nu]$ with $i<j$, $a_{P_i}$ has a neighbor in $P_j^*\setminus \{a_{P_j}\}$.
    \end{itemize}
\end{theorem}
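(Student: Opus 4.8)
The plan is to prove Theorem~\ref{banana} by constructing $P_1,\dots,P_\nu$ greedily, one at a time, while keeping a large ``reservoir'' of candidate continuations; the construction can only get stuck in one degenerate situation, which we rule out using Theorem~\ref{noseed} together with Menger's theorem (Theorem~\ref{Menger}). The bound $\psi(t,\nu)$ will be a suitably fast-growing function of $t$ and $\nu$, obtained at the end by composing the quantitative losses in the argument.

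\emph{Cleanup.} As $a\not\sim b$, every path of $\mathcal P$ has length at least two, so the vertices $a_P$ lie in $P^{*}$ and are pairwise distinct by internal disjointness. First discard every $P$ with $a_P\sim b$: if $x_1,x_2,x_3\in N_G(a)\cap N_G(b)$ were pairwise non-adjacent, then $a\dd x_1\dd b$, $a\dd x_2\dd b$, $a\dd x_3\dd b$ would form a theta with ends $a,b$; so $G[N_G(a)\cap N_G(b)]$ has no stable set of size three, and since $G$ is $K_t$-free, Theorem~\ref{classicalramsey} gives $|N_G(a)\cap N_G(b)|<R(t,3)$, so fewer than $R(t,3)$ paths are removed. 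Next, as $\{a_P:P\in\mathcal P\}$ induces a $K_t$-free graph, a further application of Theorem~\ref{classicalramsey} yields a sub-collection $\mathcal P'\subseteq\mathcal P$ with $\{a_P:P\in\mathcal P'\}\cup\{b\}$ stable and $|\mathcal P'|$ still very large.

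\emph{The greedy step.} We maintain $P_1,\dots,P_k\in\mathcal P'$ and a large reservoir $\mathcal R_k\subseteq\mathcal P'\setminus\{P_1,\dots,P_k\}$ such that $\{a_{P_1},\dots,a_{P_k}\}\cup\{a_Q:Q\in\mathcal R_k\}\cup\{b\}$ is stable and, for every $i\le k$ and every $Q\in\mathcal R_k$, $a_{P_i}$ has a neighbor in $Q^{*}\setminus\{a_Q\}$; thus $(P_1,\dots,P_k)$ already meets the requirements and every member of $\mathcal R_k$ is a legal choice of $P_{k+1}$. Form the digraph $D_k$ on $\mathcal R_k$ with an arc $Q\to Q'$ whenever $a_Q$ has a neighbor in $(Q')^{*}\setminus\{a_{Q'}\}$. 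If some $Q\in\mathcal R_k$ has large out-degree in $D_k$, set $P_{k+1}:=Q$ and let $\mathcal R_{k+1}$ be its out-neighborhood, which is still large and retains the invariant — after $\nu$ such steps we are done. Otherwise every out-degree in $D_k$ is bounded by a constant, so the underlying undirected graph of $D_k$ has bounded average degree, hence a large independent set $\mathcal R$. For distinct $Q,Q'\in\mathcal R$ we then have that $a_Q$ has no neighbor in $(Q')^{*}\setminus\{a_{Q'}\}$, which combined with $a_Q\not\sim a_{Q'}$ and $a_Q\not\sim b$ gives $N_G(a_Q)\cap Q'=\{a\}$. Thus $\mathcal R$ is a large family of internally disjoint $a$--$b$ paths with stable starts, each start anticomplete to every other path minus $a$ — an \emph{independent fan}.

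\emph{Killing the independent fan; the main obstacle.} It remains to show a large independent fan cannot occur in $G\in\mathcal C_t$. The idea is to pass to the induced subgraph $G^{\circ}$ on $\{a\}$ together with the paths of $\mathcal R$, and then, using theta- and prism-freeness to control the adjacencies among distinct paths of the fan (in the spirit of Section~\ref{sec:pyramids}), to prune $G^{\circ}$ to an induced subgraph $G^{\bullet}$ that still contains $a$ and many internally disjoint paths from $a$ to $b$, and in which $a$ is trapped and an $a$-seed appears. This should be possible because already in $G^{\circ}$ every neighbor of $a$ has degree exactly two — namely $a$ and the next vertex of its path — so $a$ is trapped except for the interaction of the paths among themselves, and the pruning is exactly what tidies that interaction into the line-graph-of-a-subdivided-caterpillar shape required of an $a$-seed. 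Since $\mathcal C_t$ is hereditary, $G^{\bullet}\in\mathcal C_t$, and $b\notin N_{G^{\bullet}}[a]$; hence Theorem~\ref{noseed} produces a set $S_b$ separating $a$ from $b$ in $G^{\bullet}$ with $|S_b|<s(t)$, whereas the many internally disjoint $a$--$b$ paths surviving in $G^{\bullet}$ force every such separator to be large via Theorem~\ref{Menger} — a contradiction once $|\mathcal R|$ was chosen large enough. So the greedy construction never stalls, and unwinding the losses (cleanup, the $\nu$-fold reservoir shrinkage, the independent-fan extraction) yields an explicit $\psi(t,\nu)$. The hard part is this last step: extracting an honest $a$-seed from the degree-controlled picture at $a$ inside an independent fan, which forces one to tame the (necessarily non-trivial, by theta-freeness) cross-adjacencies between the fan's paths; the rest is Ramsey-type bookkeeping.
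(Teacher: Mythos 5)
Your overall strategy is the same as the paper's: after Ramsey to make the starting vertices $a_P$ stable, you work in the digraph $D$ with an arc $Q\to Q'$ when $a_Q$ has a neighbor in $(Q')^*\setminus\{a_{Q'}\}$, aiming to find the transitive pattern of the theorem, and the only obstruction is a large ``stable set'' in $D$ — your independent fan — which one rules out via an $a$-seed together with Theorem~\ref{noseed} and Menger's theorem (Theorem~\ref{Menger}). Your greedy reservoir loop is a reasonable alternative to the paper's one-shot argument (no large stable set in $D^-$, so Ramsey gives a large clique in $D^-$, hence $D$ has a large transitive subtournament); both extract the desired ordering with appropriate constants. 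Your cleanup removing paths with $a_P\sim b$ is also fine, and is a slight simplification of the paper's trick of taking a transitive tournament on $\nu+1$ vertices and discarding its source (which forces each surviving $P_j$ to have $P_j^*\setminus\{a_{P_j}\}\neq\emptyset$, hence $a_{P_j}\not\sim b$).

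The genuine gap is the step you yourself flag as the hard part: showing a large independent fan yields an $a$-seed. You gesture at ``using theta- and prism-freeness to control the adjacencies among distinct paths of the fan (in the spirit of Section~\ref{sec:pyramids})'' followed by some pruning, but no argument is given, and the pointer is misdirected: Section~\ref{sec:pyramids} analyzes how a single vertex or path attaches to a pyramid with a trapped apex, not how to organize a fan of paths into a line-graph shape. The tool that actually does the work in the paper is Theorem~\ref{connectifier}, applied to $G_1=G[(\bigcup_{Q\in\mathcal R}Q)\setminus\{a\}]$ with $S=\{a_Q:Q\in\mathcal R\}$. Because each $a_Q$ has a unique neighbor in $G_1$ (which is exactly what your independent-fan property gives), no path in $G_1$ can contain three vertices of $S$, and the caterpillar and subdivided-star outcomes of Theorem~\ref{connectifier} are immediately killed because, together with $a$, they produce thetas. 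What survives is a line graph of a caterpillar whose leaf-edges are the $S$-vertices, and a further halving step extracts the line graph of a $1$-subdivision of a caterpillar — precisely the $a$-seed needed for Theorem~\ref{noseed}. Without invoking Theorem~\ref{connectifier} (or re-deriving something equivalent to it), your sketch never reaches the $a$-seed, so the proof is incomplete exactly at its crux.
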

\begin{proof}
Let $s=s(t)$ be as in Theorem~\ref{noseed} and let $\mu=\mu(\max\{2s+1,t\})$, where $\mu(\cdot)$ is as in Theorem~\ref{connectifier}. Let $R(\cdot,\cdot)$ be as in Theorem~\ref{classicalramsey}. For every integer $p\geq 1$,  let $R_{tourn}(p)$ be the smallest positive integer $n$ such that every tournament on at least $n$ vertices contains a transitive tournament on $p$ vertices; the existence of $R_{tourn}(p)$ follows easily from Theorem~\ref{classicalramsey} (in fact, one may observe that $R_{tourn}(p)\leq R(p,p)$). Let $\gamma=R(R_{tourn}(\nu+1),\mu)$. We prove that 
$$\psi=\psi(t,\nu)=R(\gamma,t)$$
 satisfies Theorem~\ref{banana}. Let us choose $\psi$ distinct paths $P_1,\ldots, P_{\psi}\in \mathcal{P}$, and for each $i\in [\nu]$, let us write $a_i=a_{P_i}$. Since $G$ is $K_t$-free, it follows from Theorem~\ref{classicalramsey} and the definition of $\psi$ that there exists a stable set $N\subseteq \{a_i:i\in [\psi]\}$ in $G$ with $|N|=\gamma$; we may assume without loss of generality that $N=\{a_i:i\in [\gamma]\}$.

Let $D$ be a directed graph with $V(D)=N$ such that for distinct $i,j\in [\gamma]$, there is an arc from $a_i$
  to $a_j$ in $D$ if and only if $x_i$ has a neighbor in $P_j^*\setminus \{a_j\}$. Note that $D$ may contain both arcs $(a_i,a_j)$ and $(a_j,a_i)$, and so the undirected underlying graph of $D$ might not be simple. Let $D^-$ be the simple graph obtained from the undirected underlying graph of $D$ by removing one of every two parallel edges.

  \sta{\label{nostableset}$D^-$ contains no stable set of cardinality $\mu$.}
  Suppose for a contradiction that $D^-$ contains a stable set $S$ of cardinality $\mu$. We may
  assume without loss of generality that $S=\{a_1, \dots, a_{\mu}\}$.
  Let $G_1=G[(\bigcup_{j=1}^{\mu} P_j)\setminus \{a\}]$. Note that by the definition of $D$, for every $i \in [\mu]$, we have $N_{G_1}(a_i)=N_{P_i}(a_i) \setminus \{a\}$, and in  particular $|N_{G_1}(a_i)|=1$. Since $G_1$ is connected and $K_t$-free, and since $|S|=\mu=\mu(\max\{2s+1,t\})$, we can apply Theorem~\ref{connectifier} to $G_1$ and $S$. Note that every vertex in $S$ has a unique neighbor in $G_1$, and so no path in $G_1$ contains $\max\{2s+1,t\}\geq 3$ vertices from $S$. Consequently, 
  there is an induced subgraph $H_1$ of $G_1$ with $|H_1\cap S|=2s+1$ for which one of the following holds.
 \begin{itemize}
  \item $H_1$ is either a caterpillar or the line graph of a caterpillar with $H_1\cap S=\mathcal{Z}(H_1)$.
  \item $H_1$ is a subdivided star with root $r_1$ such that $\mathcal{Z}(H_1)\subseteq H_1\cap S\subseteq \mathcal{Z}(H_1)\cup \{r_1\}$.
\end{itemize}
  If $H_1$ is a caterpillar, then $G[H_1\cup \{a\}]$ contains
  a theta with ends $a$ and $a'$ for every vertex $a'\in H_1$ of degree more than two, a contradiction. Also, if the second bullet above holds, then since every vertex in $S$ is of degree one in $G_1$, we have $H_1\cap S= \mathcal{Z}(H_1)$, and so $r_1$ is not adjacent to $a$. But then $G[H_1\cup \{a\}]$ contains
  a theta with ends $x$ and $r_1$, a contradiction.
  It follows that $H_1$ is the line graph of a caterpillar with $|H_1\cap S|=2s+1$ and $H_1\cap S=\mathcal{Z}(H_1)$. This, together with the fact that every vertex in $H_1\cap S\subseteq S$ has a unique neighbor in $H_1\subseteq G$, implies that $H_1$ contains the line graph $H_2$ of a $1$-subdivision of a caterpillar with $|H_2\cap S|=s$ and $H_2\cap S=\mathcal{Z}(H_2)$. Let $S_2=H_2 \cap S=\mathcal{Z}(H_2)$; then $S_2$ is the set of all vertices of degree one in $H_2$, and we may assume without loss of generality that $S_2=\{a_1, \dots, a_{s}\}$.
  Let $G_2=G[H_2 \cup (\bigcup_{j=1}^{s}P_j)]$. It follows that $G_2\in \mathcal{C}_t$, $N_{G_2}(a)=S_2=\mathcal{Z}(H_2)$ and $a$ is trapped in $H_2\cup \{a\}$. Therefore, $H_2$ is an $a$-seed in $G_2$. Since $b\in G_2\setminus N_{G_2}[a]$, applying Theorem~\ref{noseed} to $G_2$ and $a$, we deduce that
  there exists $S_b \subseteq G_2\setminus \{a,b\}$
  such that $|S_b|< s$ and $S_b$ separates $a$ and $b$ in $G_2$. But
  $P_1, \dots, P_{s}$ are $s$ pairwise internally disjoint paths in $G_2$ from $a$ to $b$, a contradiction with Theorem~\ref{Menger}. This proves \eqref{nostableset}.\medskip

 \sloppy By \eqref{nostableset}, Theorem~\ref{classicalramsey}, and the definition of $\gamma$, $D^-$ contains a clique of cardinality $R_{tourn}(\nu+1)$.  This, along with the definition of $R_{tourn}(\cdot)$, implies that
  $D$ contains (as a subdigraph) a transitive tournament $K$ on $\nu+1$ vertices.
  We may assume without loss of generality that $V(K)=\{a_1, \dots, a_{\nu+1}\}$ such that for distinct $i,j\in [\nu+1]$, $(a_i,a_j)$ is an arc in $K$ if $i<j$. From the definition of $D$, it follows that $\{a_2,\ldots, a_{\nu+1},b\}$ is a stable set in $G$, and for all $i,j\in \{2,\ldots, \nu+1\}$ with $i<j$, $a_i$ has a neighbor in $P_j^*\setminus \{a_j\}$. Hence, $I=\{2,\ldots, \nu+1\}$ satisfies Theorem~\ref{banana}. This completes the proof.
\end{proof}

For positive integers $d$ and $r$, let $T_d^r$ denote the rooted tree in which every leaf is at distance $r$ from the root, the root has degree $d$, and every vertex that is neither a leaf nor the root has degree $d+1$. We need a result from \cite{KP}:

\begin{theorem}[Kierstead and Penrice \cite{KP}]\label{inducedtree}
    For all integers $d,r,s,t\geq 1$, there exists an integer $f=f(d,r,s,t)\geq 1$ such that if $G$ contains $T_f^f$ as a subgraph, then $G$ contains
    one of $K_{s,s}$, $K_t$ and $T_d^r$ as an induced subgraph.
\end{theorem}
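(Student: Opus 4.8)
The plan is to prove the statement by a Ramsey-theoretic analysis of a single large embedded copy of $T_f^f$. Fix $f$ enormous, take a subgraph of $G$ isomorphic to $T_f^f$, root it, and write $\rho$ for the root. The first step is a \emph{tree-Ramsey homogenization}: by iterating Ramsey's theorem (Theorem~\ref{classicalramsey}), processing the levels from $\rho$ downwards, pass to a sub-embedding isomorphic to $T_N^N$ (with $N$ still large, a function of $d,r,s,t$ only) that is \emph{homogeneous}, meaning that whether two vertices of the image are adjacent in $G$ depends only on their relative position in the embedded tree: whether one is an ancestor of the other, together with the two depths involved; or, if neither is an ancestor of the other, the depth of their least common ancestor together with the two depths. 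At each stage the auxiliary colouring uses a number of colours bounded in terms of $N$, so $f$ sufficiently large suffices; this homogenization, while standard, is the most bookkeeping-heavy ingredient. After it, the restriction of $G$ to the embedded tree is completely described by the sets $\Sigma_k$ (for each depth $k$, the set of ancestor-depths to which a depth-$k$ vertex is joined, with $k-1\in\Sigma_k$ always) together with the data saying which ``cross'' configurations are joined.

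Second, suppose some cross pair is joined, say one with least common ancestor at depth $p$ and endpoints at depths $k,k'$. Inside one child-subtree of $\rho$, the depth-$k$ vertices form a set all of whose pairs are cross pairs; by homogeneity either some equal-depth cross configuration is joined, in which case suitably chosen descendants in distinct branches form a clique and we get $K_t$ for $N$ large, or else that set is independent, and likewise for the depth-$k'$ vertices of another child-subtree of $\rho$ (using that the joined configuration has apex depth $0$), which are complete to the first set; thinning each side with Ramsey's theorem produces $K_t$ or an induced $K_{s,s}$. Likewise, suppose $|\Sigma_b|\ge R(t,s)$ for some depth $b$: take a root-to-leaf chain and a depth-$b$ vertex $w$ on it; among the $\ge R(t,s)$ ancestors of $w$ joined to $w$, Ramsey's theorem gives a clique of size $t$ (done) or an independent set $U$ of size $s$; letting $u^{\ast}$ be the deepest vertex of $U$, every depth-$b$ descendant of $u^{\ast}$ is joined to all of $U$ (its ancestor-depths lie in $\Sigma_b$), and, since we may assume no cross pair is joined, these descendants contain an independent set of size $s$ in distinct branches of $u^{\ast}$, giving an induced $K_{s,s}$. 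Henceforth assume no cross pair is joined and $|\Sigma_b|<R(t,s)$ for every $b$.

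Third, under these assumptions the embedded tree together with its chords is a tree equipped with a chord system all of whose back-sets $\Sigma_k$ are small. The claim is that one can then choose depths $i_0<i_1<\cdots<i_r$ (for $N$ large) so that the vertices of the image at these depths induce a copy of $T_d^r$: consecutive chosen depths are joined (through the parent edge, or through a short chord that happens to be present), non-consecutive chosen depths are non-adjacent, and, since no cross pair is joined, the sibling subtrees are pairwise anticomplete; the branching $d$ is achieved since the number of embedded vertices between consecutive chosen depths is huge. Producing such a ``skip pattern'' of depths uses a further counting and Ramsey argument on the depths, exploiting that each $\Sigma_k$ is small so that the chords visible inside any bounded window of depths can be dodged; if this extraction is blocked, chords are forced to be dense along a root-to-leaf chain, whence the Kővári–Sós–Turán theorem together with Ramsey's theorem instead yields an induced $K_{s,s}$ or $K_t$. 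Since a copy of $T_d^r$ inside a tree is automatically induced in that tree, and the tree is an induced subgraph of $G$, we get an induced $T_d^r$ in $G$, completing the proof.

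The step I expect to be the main obstacle is this last one: turning ``bounded, well-behaved chord structure with no cross chords'' into an actual induced copy of $T_d^r$, and pinning down exactly which dense configuration blocks it and forces $K_{s,s}$ or $K_t$, is the real content of the theorem; it is this dichotomy, together with the iterated tree-Ramsey of the first step, that is responsible for the tower-type dependence of $f$ on $d,r,s,t$.
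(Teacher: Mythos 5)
The paper does not prove this statement: Theorem~\ref{inducedtree} is imported verbatim from Kierstead and Penrice \cite{KP}, so there is no internal proof to compare yours against, and your proposal has to be judged on its own. Judged that way, it has a genuine gap, and it sits exactly where you claim the argument is ``standard'': the homogenization in your first step. Processing the levels of $T_f^f$ from the root downwards, Ramsey/pigeonhole lets you make the candidate children of \emph{one} fixed frontier vertex homogeneous (same adjacency pattern to their own ancestor chain, pairwise relations constant), but it does not let you force the \emph{same} surviving pattern at every frontier vertex simultaneously: different branches may have all of their candidates carrying different ancestor-patterns, and you cannot discard frontier vertices to synchronize them without destroying the branching of the partial tree already built above them. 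Obtaining a copy of $T_N^N$ on which adjacency depends only on the position type (so that $\Sigma_k$ is a function of the depth $k$ alone, uniformly over all branches) is a Milliken/Halpern--L\"auchli-type tree partition statement, not an iteration of Theorem~\ref{classicalramsey}; and your Steps 3 and 4 use this uniformity essentially (e.g.\ ``every depth-$b$ descendant of $u^{\ast}$ is joined to all of $U$'' is exactly the cross-branch synchronization). The known proof in \cite{KP} avoids this entirely by inducting on the radius $r$ of the target tree and invoking the $K_t$/$K_{s,s}$ escape hatches at every stage, rather than homogenizing globally first.

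Two further points. First, your final extraction step is not a proof: once you know that no cross pair is joined and $|\Sigma_b|<R(t,s)$ for every $b$, what remains is a concrete combinatorial lemma (a monotone induced path of length $r$ in a graph on $\{0,\dots,N\}$ containing the monotone Hamiltonian path and having back-degree less than $R(t,s)$), which I believe is true but which you do not establish; and your proposed fallback is incoherent, since with all back-degrees already bounded the chord graph along a root-to-leaf chain has only linearly many edges and can never be dense enough for K\H{o}v\'ari--S\'os--Tur\'an to apply. Second, Step 2 is essentially right but garbled: the joined cross configuration has apex depth $p$, not $0$, so you should work below a depth-$p$ vertex rather than below $\rho$, take the two sides in two distinct child subtrees of that vertex, and note that each side is independent because otherwise some equal-depth cross configuration is joined and a depth-$q$ vertex with $t$ children already yields $K_t$. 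In short: the case analysis after homogenization is salvageable, but the homogenization itself, as described, does not follow from Ramsey's theorem, and the concluding extraction is asserted rather than proved.
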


The following lemma is the penultimate step in the proof of Theorem~\ref{mainthm}.

\begin{lemma}\label{gettreelemma}
    For all integers $d,r,t\geq 1$, there exists an integer $m=m(d,r,t)$ with the following property. Let $G\in \mathcal{C}_t$ be a graph, let $a,b\in V(G)$ be non-adjacent, and let $\{P_i:i\in [m]\}$ be a collection of $m$ pairwise internally disjoint paths in $G$ from $a$ to $b$. Then $G[\bigcup_{j=1}^{m}P_{j}]$ contains a subgraph $J$ isomorphic to $T_d^r$ such that $a\in J$, $a$ has degree $d$ in $J$ (that is, $a$ is the root of $J$), and $b\notin J$.
\end{lemma}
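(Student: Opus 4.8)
The plan is to prove, by induction on $r$, a strengthening of Lemma~\ref{gettreelemma} that also keeps a reserve of paths dangling from each leaf of the tree produced, ready to fuel the construction of the next level. Precisely, I will show: for all integers $d,r,k,t\geq 1$ there is $N=N(d,r,k,t)$ with the following property. If $G\in\mathcal C_t$, if $a,b\in V(G)$ are distinct and non-adjacent, and if $\{P_i:i\in[N]\}$ is a collection of $N$ pairwise internally disjoint paths in $G$ from $a$ to $b$, then $G[\bigcup_{i=1}^N P_i]$ contains a subgraph $J$ isomorphic to $T_d^r$, with $a$ the root and $b\notin V(J)$, together with, for each leaf $l$ of $J$, a collection $\mathcal Q_l$ of $k$ pairwise internally disjoint paths in $G[\bigcup_i P_i]$ from $l$ to $b$, such that every leaf $l$ of $J$ is non-adjacent to $b$, every $Q\in\mathcal Q_l$ satisfies $V(Q)\cap V(J)=\{l\}$, and for distinct leaves $l,l'$ of $J$, every $Q\in\mathcal Q_l$ and $Q'\in\mathcal Q_{l'}$ satisfy $V(Q)\cap V(Q')\subseteq\{b\}$. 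Lemma~\ref{gettreelemma} is then the special case $k=1$, with $m(d,r,t)=N(d,r,1,t)$. For the induction it is convenient to also allow $r=0$, interpreting $T_d^0$ as a single vertex $a$; this base case is immediate, since one may take $\mathcal Q_a$ to be any $k$ of the $N\geq k$ given paths.

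The inductive step from $r$ to $r+1$ rests on a single ``growth'' operation, applied at each leaf of the tree $J_r$ obtained from the induction hypothesis. Set $\nu=d(k+1)$ and run the induction hypothesis with reserve parameter $k'=\psi(t,\nu)$, where $\psi$ is as in Theorem~\ref{banana}; thus $N(d,r+1,k,t):=N(d,r,\psi(t,\nu),t)$ works. Fix a leaf $l$ of $J_r$ and let $G_l=G[\bigcup_{Q\in\mathcal Q_l}V(Q)]$. Since $\mathcal C_t$ is hereditary, $G_l\in\mathcal C_t$; since $b\notin V(J_r)$ and $l$ is non-adjacent to $b$, the vertices $l,b$ are distinct and non-adjacent in $G_l$; and $\mathcal Q_l$ is a collection of $k'=\psi(t,\nu)$ pairwise internally disjoint paths in $G_l$ from $l$ to $b$. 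Applying Theorem~\ref{banana} to $G_l$, $l$, $b$, $\mathcal Q_l$ and $\nu$, we obtain paths $Q_1,\dots,Q_\nu\in\mathcal Q_l$ (after relabelling) such that, writing $l_q$ for the neighbour of $l$ on $Q_q$, the set $\{l_1,\dots,l_\nu,b\}$ is stable and, for all $p<q$, the vertex $l_p$ has a neighbour in $Q_q^*\setminus\{l_q\}$.

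Now comes the index partition. Put $p_j=(j-1)(k+1)+1$ for $j\in[d]$ and $I_j=\{p_j+1,\dots,p_j+k\}$; then $p_1<\dots<p_d$, $p_d+k=\nu$, the intervals $I_1,\dots,I_d$ are pairwise disjoint, and no $I_j$ contains any $p_{j'}$ (each $p_{j'}\equiv 1\pmod{k+1}$, while $I_{j'}$ hits no residue $\equiv 1\pmod{k+1}$). Declare $l_{p_1},\dots,l_{p_d}$ to be the $d$ children of $l$ in $J_{r+1}$, joined to $l$ by the edges $l\,l_{p_j}$ (which are edges of $G[\bigcup_i P_i]$). For each $j\in[d]$ and $q\in I_j$ (so $p_j<q\leq\nu$), let $w_{j,q}$ be the last neighbour of $l_{p_j}$ along $Q_q$ as $Q_q$ is traversed from $l$ to $b$; using that $l_{p_j}$ has a neighbour in $Q_q^*\setminus\{l_q\}$ and is non-adjacent to $b$, we get $w_{j,q}\in Q_q^*\setminus\{l_q\}$, and the walk consisting of the edge $l_{p_j}w_{j,q}$ followed by the subpath of $Q_q$ from $w_{j,q}$ to $b$ is an \emph{induced} path $R_{j,q}$ from $l_{p_j}$ to $b$ with $R_{j,q}^*\subseteq Q_q^*$. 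Set $\mathcal Q_{l_{p_j}}=\{R_{j,q}:q\in I_j\}$, a collection of $k$ paths. From the disjointness of the $I_j$, the fact that each $l_{p_j}$ is an interior vertex of $Q_{p_j}$, the pairwise internal disjointness of $Q_1,\dots,Q_\nu$, the stability of $\{l_1,\dots,l_\nu,b\}$, and the inductive property $V(Q_q)\cap V(J_r)=\{l\}$, one checks directly that the $l_{p_j}$ are distinct and disjoint from $V(J_r)$, that $V(R_{j,q})\cap V(J_{r+1})=\{l_{p_j}\}$, and that $V(R_{j,q})\cap V(R_{j',q'})\subseteq\{b\}$ whenever $(j,q)\neq(j',q')$. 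Carrying out this operation at every leaf of $J_r$ — the changes at distinct leaves $l,l'$ being confined to $\bigcup_{Q\in\mathcal Q_l}V(Q)$ and $\bigcup_{Q\in\mathcal Q_{l'}}V(Q)$, which meet only in $b$ — produces a subgraph $J_{r+1}\cong T_d^{r+1}$ of $G[\bigcup_i P_i]$ with root $a$, with $b\notin V(J_{r+1})$, and with the required reserve collections at its leaves, completing the induction.

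The main obstacle is not any single deep step but the simultaneous bookkeeping in the last paragraph: we must ensure, all at once across every leaf of $J_r$ and every new child, that the new reserve paths are genuinely \emph{induced} (so that Theorem~\ref{banana} can be reapplied at the next level — this is why $w_{j,q}$ must be the \emph{last} neighbour, not an arbitrary one), that distinct new children are distinct and miss the existing tree, and that the reserve collections of siblings, of cousins, and of leaves in different parts of $J_r$ overlap only at $b$. This three-way disjointness is exactly what dictates the $(k+1)$-spaced pattern $p_j=(j-1)(k+1)+1$ together with the choice $\nu=d(k+1)$, and it is also the reason we must feed $\psi(t,\nu)$ reserve paths (rather than a fixed number) into each leaf and hence prove the strengthened statement instead of the bare Lemma.
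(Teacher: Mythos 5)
Your proof is correct, and it takes a genuinely different structural route from the paper's. Both proofs rest on Theorem~\ref{banana} and the same index-spacing device (one ``special'' index per block of $k$, resp.\ $m_{r-1}$, consecutive indices), but the paper does not strengthen the statement or carry reserve paths. Instead, the paper's induction on $r$ applies Theorem~\ref{banana} once with $m_r=\psi(t,(m_{r-1}+1)d)$ paths, designates $a_i'=a_{(i-1)m_{r-1}+i}$ as the $i$-th child for $i\in[d]$, and builds for each $i$ a family $\{Q_j:j\in A_i\}$ of $m_{r-1}$ pairwise internally disjoint \emph{induced} paths from $a_i'$ to $b$ with $Q_j^*\subseteq P_j^*$; it then invokes the unstrengthened induction hypothesis once per $i$ to obtain a subtree $J_i\cong T_d^{r-1}$ contained in $G[\bigcup_{j\in A_i}Q_j]$. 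Since the index sets $A_i$ are disjoint and the $P_j$ are internally disjoint, the subtrees $J_i$ are automatically pairwise disjoint and automatically miss $a$ and the other roots, so the cross-leaf disjointness invariants that occupy most of your final paragraph never need to be tracked. Your version, with reserve paths and level-by-level growth, makes the disjointness explicit in the hypothesis rather than emergent from the containment $J_i\subseteq G[\bigcup_{j\in A_i}Q_j]$, and it surfaces the ``last neighbour'' trick that the paper uses only implicitly when asserting the existence of the induced paths $Q_j$; but it is heavier on bookkeeping and yields a larger bound (the paper's base case gives $m_1=d$, while your scheme already gives $N(d,1,1,t)=\psi(t,2d)$). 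Both are valid proofs.
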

\begin{proof}
Let $d,t\geq 1$ be fixed. Let $m_1=d$. For every integer $r>1$, let 
$m_r=\psi(t,(m_{r-1}+1)d)$ where $\psi(\cdot,\cdot)$ is as in Theorem~\ref{banana}. We prove by induction on $r\geq 1$ that $m(d,r,t)=m_r$ satisfies Lemma~\ref{gettreelemma}. Let $P_1,\ldots, P_{m_r}$ be $m_r$ pairwise internally disjoint paths in $G$ from $a$ to $b$. Since $a$ and $b$ are not adjacent, it follows that for each $i\in [m_r]$, we have $P_i^*\neq \emptyset$. Let $a_{i}$ be the neighbor of $a$ in $P_i$. In particular, we have $b\notin \{a_i:i\in [m_r]\}$. Suppose first that $r=1$. Then we have $|\{a_i:i\in [m_1]\}|=m_1=d$, and so $G[\{a_i:i\in [m_r]\}\cup \{a\}]$ contains a (spanning) subgraph $J$ isomorphic to $T_d^1$ such that $a\in J$ and $a$ has degree $d$ in $J$, and we have $b\notin J$, as desired. Therefore, we may assume that $r\geq 2$. Since $m_r=\psi(t,(m_{r-1}+1)d)$, we can apply Theorem~\ref{banana} to $a,b$, and $\mathcal{P}=\{P_i:i\in [m_r]\}$. Without loss of generality, we may deduce that $\{a_1, \cdots, a_{(m_{r-1}+1)d},b\}$ is a stable set in $G$, and for all $i,j\in [(m_{r-1}+1)d]$ with $i<j$, $a_i$ has a neighbor in $P_j^*\setminus \{a_j\}$. For every $i\in [d]$, let $a'_i=a_{(i-1)m_{r-1}+i}$ and let
  $$A_i=\{(i-1)m_{r-1}+i+1, \ldots, (i-1)m_{r-1}+i+m_{r-1}\}.$$
  In particular, we have $|A_i|=m_{r-1}$. Then for each $i\in [d]$ and each $j\in A_i$, $a'_i$ has a neighbor in $P_j^*\setminus \{a_j\}$, and so there exists a path $Q_j$ in $G$ from $a'_i$ to $b$ with $Q_j^*\subseteq P_j^*$. Now, for every $i\in [d]$, $a_i'$ and $b$ are non-adjacent, and $\{Q_j:j\in A_i\}$ is a collection of $m_{r-1}$ pairwise internally disjoint paths in $G$ from $a_i'$ to $b$. It follows from the induction hypothesis that $G[\bigcup_{j\in A_i}Q_{j}]$ contains a subgraph $J_i$ isomorphic to $T_d^{r-1}$ such that $a_i'\in J_i$, $a_i'$ has degree $d$ in $J_i$, and $b\notin J_i$. But now $G[(\bigcup_{i=1}^{d}V(J_{i}))\cup \{a\}]\subseteq G[\bigcup_{j=1}^{m_r}P_{j}]$ contains a (spanning) subgraph $J$ isomorphic to $T_d^{r}$ such that $a\in J$, $a$ has degree $d$ in $J$, and $b\notin J$. This completes the proof of Lemma~\ref{gettreelemma}.
\end{proof}

Finally, we prove Theorem~\ref{mainthm}, which we restate:

\setcounter{section}{1}
\setcounter{theorem}{7}

\begin{theorem}
  For every tree $F$ and every integer $t \geq 1$, there exists an integer $\tau(F,t)\geq 1$ such that
  every graph in $\mathcal{C}_t(F)$ has treewidth at most $\tau(F,t)$.
\end{theorem}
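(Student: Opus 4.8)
The plan is to prove the statement by contradiction: assuming a graph $G \in \mathcal{C}_t$ with $F$ not an induced subgraph of $G$ has enormous treewidth, I will extract a large complete-like tree $T_f^f$ as a (not necessarily induced) subgraph of $G$, and then feed it into the Kierstead--Penrice theorem (Theorem~\ref{inducedtree}) to recover $F$ as an \emph{induced} subgraph, a contradiction. The engine producing the large tree is the pair Theorem~\ref{banana} and Lemma~\ref{gettreelemma}, which in turn rest on Theorem~\ref{noseed} and hence on the whole strip-structure machinery of Sections~\ref{sec:pyramids}--\ref{sec:stripconnectivity}.

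Concretely, I would set up the constants as follows. Since $F$ is a tree, there are integers $d=d(F)$ and $r=r(F)$ (for instance $d=\max(\Delta(F),1)$ and $r=|V(F)|$) such that $F$ is isomorphic to an induced subgraph of $T_d^r$; indeed, rooting $F$ at any vertex yields a rooted tree of depth at most $|V(F)|-1$ in which the root has at most $\Delta(F)$ children and every other vertex has at most $\Delta(F)-1$ children, so $F$ embeds into $T_d^r$ level by level, and any subtree of a tree is an induced subgraph of it. Let $f=f(d,r,t,t)$ be as in Theorem~\ref{inducedtree} applied with $s=t$, let $m=m(f,f,t)$ be as in Lemma~\ref{gettreelemma} applied with both tree-parameters equal to $f$, put $k=\max(m,t)$, and define $\tau(F,t)=\beta(k,t)$ with $\beta$ as in Corollary~\ref{noblocksmalltw_Ct}.

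Now suppose $G\in\mathcal{C}_t(F)$ has $\tw(G)>\tau(F,t)$. By Corollary~\ref{noblocksmalltw_Ct}, $G$ contains a strong $k$-block $B$. Since $|B|\ge k\ge t$ and $G$ has no $K_t$, the set $B$ is not a clique, so it contains two non-adjacent vertices $a$ and $b$; by the definition of a strong $k$-block there are $k\ge m$ pairwise internally disjoint paths in $G$ from $a$ to $b$. Applying Lemma~\ref{gettreelemma} (with tree-parameters $f,f$) to $m$ of these paths, $G$ contains a subgraph isomorphic to $T_f^f$. By Theorem~\ref{inducedtree}, $G$ then contains an induced subgraph isomorphic to one of $K_{t,t}$, $K_t$, or $T_d^r$. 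But $G$ is $t$-clean, so it contains neither $K_{t,t}$ nor $K_t$ (both are $t$-basic obstructions); hence $G$ contains an induced copy of $T_d^r$, and therefore an induced copy of $F$, contradicting $G\in\mathcal{C}_t(F)$. So $\tw(G)\le\tau(F,t)$, as claimed.

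I do not expect any serious obstacle in this final assembly: it is essentially parameter bookkeeping, and the only point needing a moment's thought is producing two \emph{non-adjacent} endpoints for the bundle of paths, which $K_t$-freeness of $G$ supplies for free once the block has at least $t$ vertices. The substantive difficulty of the whole argument lies upstream of this section --- above all in Theorem~\ref{banana}, where one must show that a sufficiently large family of internally disjoint $a$--$b$ paths either contains an $a$-seed (and then Theorem~\ref{noseed} contradicts Menger's theorem, Theorem~\ref{Menger}, applied to $a$ and $b$) or already displays the transitive/star configuration that lets Lemma~\ref{gettreelemma} recurse --- and, beneath that, in the strip-structure and jewel results (Theorems~\ref{stripstructure}, \ref{wherejewelsattach}, \ref{fewjewels} and \ref{antijewelseparation}) that make Theorem~\ref{noseed} possible.
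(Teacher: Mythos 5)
Your proposal is correct and follows essentially the same argument as the paper: Corollary~\ref{noblocksmalltw_Ct} supplies a strong block, $K_t$-freeness furnishes a non-adjacent pair in that block, Lemma~\ref{gettreelemma} turns the disjoint paths into a $T_f^f$ subgraph, and Theorem~\ref{inducedtree} converts it to an induced $T_d^r\supseteq F$. The only (harmless) deviations are in constant bookkeeping: you take $s=t$ in Theorem~\ref{inducedtree} and rule out $K_{t,t}$ via $t$-cleanness, whereas the paper takes $s=3$ and rules out $K_{3,3}$ via theta-freeness, and you use $\max(m,t)$ where the paper uses $\max(m,t+1)$; both choices work.
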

\begin{proof}

Let $d$ and $r$ be the maximum degree and the radius of $F$, respectively. It follows that $T_d^r$ contains $F$ as an induced subgraph. Let $f=f(d,r,3,t)$ be as in Theorem~\ref{inducedtree} and let $m=m(f,f,t)$ be as in Lemma~\ref{gettreelemma}. Let $\beta(\cdot,\cdot)$ be as in Corollary~\ref{noblocksmalltw_Ct}. We claim that $\tau(F,t)=\beta(\max\{m,t+1\},t)$ satisfies Theorem~\ref{mainthm}. Suppose for a contradiction that $\tw(G)>\tau$ for some $G\in \mathcal{C}_t(F)$. By Corollary~\ref{noblocksmalltw_Ct}, $G$ contains a strong $\max\{m,t+1\}$-block $B$. Consequently, since $G$ is $K_t$-free, there are two distinct and non-adjacent vertices $a,b\in B$, and $m$ pairwise internally disjoint paths $P_1,\ldots, P_{m}$ in $G$ from $a$ to $b$. It follows from Lemma~\ref{gettreelemma} that $G$ contains $T_f^f$ as a subgraph. Also, since  $G \in \mathcal{C}_t(F)\subseteq \mathcal{C}_t$, $G$ is $(K_{3,3}, K_t)$-free. But now by Theorem~\ref{inducedtree}, $G$ contains $T_d^r$, and so $F$, as an induced subgraph, a contradiction. This completes the proof.
\end{proof}

\end{document}